\documentclass{amsart}

\usepackage{fullpage}
\usepackage{lmodern}
\usepackage{amsmath}
\usepackage{amsfonts}
\usepackage{amssymb}
\usepackage{graphicx}
\usepackage{mathrsfs}
\usepackage{amsthm}
\usepackage{enumerate}
\usepackage{leftidx}
\usepackage{hyperref}
\usepackage{extarrows}
\usepackage[all]{xy}
\usepackage{stmaryrd}
\usepackage{wasysym}
\usepackage[OT2,T1]{fontenc}
\usepackage{epstopdf}
\usepackage {hyperref}
\vfuzz2pt 
\hfuzz2pt 

\setlength{\itemsep}{0pt}
 \setcounter{totalnumber}{3}
 \setcounter{topnumber}{1}
 \setcounter{bottomnumber}{3}
 \setcounter{secnumdepth}{3}





\DeclareMathOperator{\ad}{ad}

\DeclareMathOperator{\diag}{diag}

\DeclareMathOperator{\coh}{\mathbf {H}}

\DeclareMathOperator{\Ind}{Ind}
\DeclareMathOperator{\Int}{Int}

\DeclareMathOperator{\Res}{Res}

\DeclareMathOperator{\id}{id}

\DeclareMathOperator{\GL}{GL}

\DeclareMathOperator{\SO}{SO}

\DeclareMathOperator{\UU}{U}

\DeclareMathOperator{\GSpin}{GSpin}
\DeclareMathOperator{\Sp}{Sp}

\newcommand{\RZ}{\mathcal{N}}
\newcommand{\RZO}{\mathrm{RZ}}
\DeclareMathOperator{\Spf}{Spf}

\DeclareMathOperator{\inv}{inv}


\newcommand{\abs}[1]{\left\vert#1\right\vert}
\newcommand{\set}[1]{\left\{#1\right\}}
\newcommand{\sett}[1]{\left(#1\right)}

\newcommand{\To}{\longrightarrow}
\newcommand{\isom}{\overset{\sim}{\To}}

\newcommand{\ZZ}{\mathbb{Z}}
\newcommand{\QQ}{\mathbb{Q}}

\newcommand{\GG}{\mathbb{G}}

\newcommand{\TT}{\mathcal{T}}

\newcommand{\rr}{\mathfrak{r}}
\newcommand{\MMM}{\mathscr M}

\newcommand{\ignore}[1]{}
\DeclareSymbolFont{cyrletters}{OT2}{wncyr}{m}{n}
\DeclareMathSymbol{\Sha}{\mathalpha}{cyrletters}{"58}

\usepackage{enumitem}

\setlist[enumerate]{leftmargin=*}
\setlist[itemize]{leftmargin=*}

\linespread{1.2}




\RequirePackage{xspace}
\RequirePackage{etoolbox}
\RequirePackage{varwidth}
\RequirePackage{enumitem}
\RequirePackage{tensor}
\RequirePackage{mathtools}
\RequirePackage{longtable}
\RequirePackage{multirow}

\setcounter{tocdepth}{1}

\def\ge{\geqslant}
\def\le{\leqslant}

\def\o{\omega}

\def\s{\sigma}

\def\i{^{-1}}

\def\<{\langle}
\def\>{\rangle}

\newcommand{\BF}{\ensuremath{\mathbb {F}}\xspace}
\newcommand{{\BG}}{\ensuremath{\mathbb {G}}\xspace}

\newcommand{{\BK}}{\ensuremath{\mathbb {K}}\xspace}

\newcommand{\BS}{\ensuremath{\mathbb {S}}\xspace}

\newcommand{\BZ}{\ensuremath{\mathbb {Z}}\xspace}

\newcommand{\Ad}{{\mathrm{Ad}}}
\DeclareMathOperator{\Orb}{Orb}






\DeclareMathOperator{\tr}{tr}








\DeclareMathOperator{\supp}{supp}

%
\newtheorem{theorem}[subsubsection]{Theorem}
\newtheorem{proposition}[subsubsection]{Proposition}
\newtheorem{lemma}[subsubsection]{Lemma}

\newtheorem{corollary}[subsubsection]{Corollary}

\theoremstyle{definition}
\newtheorem{definition}[subsubsection]{Definition}

\newtheorem{remark}[subsubsection]{Remark}

\numberwithin{equation}{subsection}


\setitemize[0]{leftmargin=*,itemsep=\the\smallskipamount}
\setenumerate[0]{leftmargin=*,itemsep=\the\smallskipamount}

\renewcommand{\to}{%
   \ifbool{@display}{\longrightarrow}{\rightarrow}%
   }
\let\shortmapsto\mapsto
\renewcommand{\mapsto}{%
   \ifbool{@display}{\longmapsto}{\shortmapsto}%
   }
\newlength{\olen}
\newlength{\ulen}
\newlength{\xlen}
\newcommand{\xra}[2][]{%
   \ifbool{@display}%
      {\settowidth{\olen}{$\overset{#2}{\longrightarrow}$}%
       \settowidth{\ulen}{$\underset{#1}{\longrightarrow}$}%
       \settowidth{\xlen}{$\xrightarrow[#1]{#2}$}%
       \ifdimgreater{\olen}{\xlen}%
          {\underset{#1}{\overset{#2}{\longrightarrow}}}%
          {\ifdimgreater{\ulen}{\xlen}%
             {\underset{#1}{\overset{#2}{\longrightarrow}}}
             {\xrightarrow[#1]{#2}}}}%
      {\xrightarrow[#1]{#2}}
   }
\makeatother
\newcommand{\xyra}[2][]{%
   \settowidth{\xlen}{$\xrightarrow[#1]{#2}$}%
   \ifbool{@display}%
      {\settowidth{\olen}{$\overset{#2}{\longrightarrow}$}%
       \settowidth{\ulen}{$\underset{#1}{\longrightarrow}$}%
       \ifdimgreater{\olen}{\xlen}%
          {\mathrel{\xymatrix@M=.12ex@C=3.2ex{\ar[r]^-{#2}_-{#1} &}}}%
          {\ifdimgreater{\ulen}{\xlen}%
             {\mathrel{\xymatrix@M=.12ex@C=3.2ex{\ar[r]^-{#2}_-{#1} &}}}
             {\mathrel{\xymatrix@M=.12ex@C=\the\xlen{\ar[r]^-{#2}_-{#1} &}}}}}%
      {\mathrel{\xymatrix@M=.12ex@C=\the\xlen{\ar[r]^-{#2}_-{#1} &}}}%
   }
\makeatletter
\newcommand{\xla}[2][]{%
   \ifbool{@display}%
      {\settowidth{\olen}{$\overset{#2}{\longleftarrow}$}%
       \settowidth{\ulen}{$\underset{#1}{\longleftarrow}$}%
       \settowidth{\xlen}{$\xleftarrow[#1]{#2}$}%
       \ifdimgreater{\olen}{\xlen}%
          {\underset{#1}{\overset{#2}{\longleftarrow}}}%
          {\ifdimgreater{\ulen}{\xlen}%
             {\underset{#1}{\overset{#2}{\longleftarrow}}}
             {\xleftarrow[#1]{#2}}}}%
      {\xleftarrow[#1]{#2}}
   }
\newcommand{\isoarrow}{%
   \ifbool{@display}{\overset{\sim}{\longrightarrow}}{\xrightarrow\sim}%
   }
   
\begin{document}
\title{Fine Deligne--Lusztig varieties and Arithmetic Fundamental Lemmas}
\author[Xuhua He]{Xuhua He}
\email{xuhuahe@math.cuhk.edu.hk}
\address{Lady Shaw Building,
	The Chinese University of Hong Kong,
	Shatin, N.T., Hong Kong}
\author[Chao Li]{Chao Li}\email{chaoli@math.columbia.edu} 
\address{Department of Mathematics, Columbia University, 2990 Broadway,
	New York, NY 10027, USA}

\author[Yihang Zhu]{Yihang Zhu}\email{yihang@math.columbia.edu}
\address{Department of Mathematics, Columbia University, 2990 Broadway,
	New York, NY 10027, USA}

\subjclass[2010]{11G18, 14G17; secondary 20G40}
\keywords{}

\begin{abstract}
We prove a character formula for some closed fine Deligne--Lusztig varieties. We apply it to compute fixed points for fine Deligne--Lusztig varieties arising from the basic loci of Shimura varieties of Coxeter type. As an application, we prove an arithmetic intersection formula for certain diagonal cycles on unitary and GSpin Rapoport--Zink spaces arising from the arithmetic Gan--Gross--Prasad conjectures. In particular, we prove the arithmetic fundamental lemma in the minuscule case, without assumptions on the residual characteristic.
\end{abstract}

\maketitle
\tableofcontents

\section{Introduction}

\subsection{The AFL conjecture}

The \emph{arithmetic Gan--Gross--Prasad (AGGP) conjectures} generalize the celebrated Gross--Zagier formula to higher dimensional Shimura varieties of orthogonal or unitary type (\cite[\S 27]{Gan2012}, \cite[\S 3.2]{Zhang2012}, \cite{Rapoport2017a}). The \emph{arithmetic fundamental lemma conjecture} (AFL) arises from Zhang's relative trace formula approach towards the AGGP conjecture for the group $\UU(1, n-2)\times \UU(1, n-1), n\geq 2$. It relates a derivative of orbital integrals on symmetric spaces to an arithmetic intersection number of cycles on unitary Rapoport--Zink spaces,
\begin{equation}
\label{eq:AFL}
\omega(\gamma) \cdot \partial \Orb(\gamma, \mathbf{1}_{S_n(\mathcal{O}_{F})})=-\Int(g)\cdot \log q.
\end{equation}
For the precise definitions of the quantities appearing in the identity, see \cite[\S 1]{Rapoport2017}. The left-hand side of (\ref{eq:AFL}) is known as the \emph{analytic side} and the right-hand side is known as the \emph{arithmetic-geometric} side.

Let us briefly recall the definition of the arithmetic-geometric side. Let $p$ be an odd prime. Let $F$ be a finite extension of $\mathbb{Q}_p$ with residue field $\mathbb{F}_q$ and a uniformizer $\pi$. Let $E$ be an unramified quadratic extension of $F$. Let $\breve E$ be the completion of the maximal unramified extension of $E$. Let $k = \overline{ \BF}_q$. For any integer $n\geq 1$, the \emph{unitary Rapoport--Zink space} $\mathcal{N}_n$ is the formal scheme over $S=\Spf \mathcal{O}_{\breve E}$ parameterizing deformations up to quasi-isogeny of height 0 of unitary $\mathcal O_F$-modules of signature $(1,n-1)$. Fix an integer $n\geq 2$. There is a natural closed immersion $\delta: \mathcal{N}_{n-1}\rightarrow \mathcal{N}_n$. Denote by $\Delta\subset \mathcal{N}_{n-1}\times_S\mathcal{N}_{n}$ the image of $(\id,\delta): \mathcal{N}_{n-1}\rightarrow \mathcal{N}_{n-1}\times_S\mathcal{N}_n$.

Let $C_{n-1}$ be a non-split Hermitian space of dimension $n-1$, for the quadratic extension $E/F$. Here non-split means that the discriminant has odd valuation. Define a non-split Hermitian space of dimension $n$ by $C_{n}: =C_{n-1} \oplus E u$, where the direct sum is orthogonal and $u$ has norm $1$. The unitary group $J(F):=\UU(C_n)(F)$ acts on $\mathcal{N}_n$ in a natural way. Let $g\in J(F)$. The arithmetic-geometric side of the AFL conjecture (\ref{eq:AFL}) concerns the arithmetic intersection number of the diagonal cycle $\Delta$ and its translate by $\id\times g$, defined as (see \cite[\S 2.2]{Zhang2012}) $$\Int(g):=\chi(\mathcal{N}_{n-1}\times_S\mathcal{N}_n, \mathcal{O}_\Delta \otimes^\mathbb{L}\mathcal{O}_{(\id \times g)\Delta}).$$ When $\Delta$ and $(\id\times g)\Delta$ intersect properly, namely when the formal scheme
\begin{equation}
\label{eq:intersection}
\Delta \cap(\id\times g)\Delta \cong \delta(\mathcal{N}_{n-1})\cap \mathcal{N}^g_n
\end{equation}
is an Artinian scheme (where $\mathcal{N}^g_n$ denotes the fixed points of $g$), the arithmetic intersection number $\Int(g)$ is simply the $\mathcal{O}_{\breve E}$-length of the Artinian scheme (\ref{eq:intersection}) (see \cite[Proposition 4.2 (iii)]{RTZ}).

Recall that $g\in J(F)$ is called \emph{regular semi-simple} if $$L(g):=\mathcal{O}_E\cdot u+\mathcal{O}_E\cdot g u+\cdots+ \mathcal{O}_E\cdot g^{n-1} u$$ is a full-rank $\mathcal{O}_E$-lattice in $C_n$. In this case, the \emph{invariant} of $g$ is the unique sequence of integers $$\inv(g):=(r_1\ge r_2\ge \ldots\ge r_n)$$ characterized by the condition that there exists a basis $\{e_i\}$ of the lattice $L(g)$ such that $\{\pi^{-r_i}e_i\}$ is a basis of the dual lattice $L(g)^\vee$. It turns out that the ``bigger'' $\inv(g)$ is, the more difficult it is to compute the intersection. With this in mind, recall that a regular semi-simple element $g$ is called \emph{minuscule} if $r_1=1$ and $r_n\ge0$.

\subsection{The AFL in the minuscule case}

In the minuscule case, the analytic side is relatively straightforward to evaluate. One of our main results is an explicit formula for the arithmetic-geometric side $\Int(g)$ when $g$ is minuscule, which allows us to establish new cases of the AFL conjecture.

\begin{theorem}[Corollary \ref{cor:AFL}]\label{thm:AFL}
	The arithmetic fundamental lemma holds when $g$ is minuscule.
\end{theorem}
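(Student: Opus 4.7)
The plan is to reduce Theorem~\ref{thm:AFL} to an explicit computation of the arithmetic intersection number $\Int(g)$ in the minuscule case, and then match it with the analytic side of \eqref{eq:AFL}. Since $g$ is minuscule, both the diagonal cycle $\delta(\mathcal{N}_{n-1})$ and the fixed-point formal scheme $\mathcal{N}_n^g$ are supported in the basic (supersingular) locus of the special fiber, so the intersection $\delta(\mathcal{N}_{n-1})\cap \mathcal{N}_n^g$ is Artinian and $\Int(g)$ reduces, by \cite[Proposition 4.2 (iii)]{RTZ}, to the sum over intersection points $x\in (\delta(\mathcal{N}_{n-1})\cap \mathcal{N}_n^g)(k)$ of the local $\mathcal{O}_{\breve E}$-length at $x$.

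To enumerate the set of intersection points, I would use the Bruhat--Tits stratification of the reduced basic locus of $\mathcal{N}_n$ due to Vollaard--Wedhorn, whose closed strata are closed fine Deligne--Lusztig varieties of Coxeter type for inner forms of unitary groups. The $k$-points of $\delta(\mathcal{N}_{n-1})\cap \mathcal{N}_n^g$ correspond to certain $\pi$-stable $\mathcal{O}_E$-lattices in the rational Dieudonn\'e module of a base point which are stable under $g$ and compatible with the embedding $\delta$; these are naturally organized along the stratification. On each closed stratum, counting $g$-fixed $k$-points becomes a Lefschetz trace of a Frobenius-twisted automorphism on a closed fine Deligne--Lusztig variety of Coxeter type, which is evaluated by the character formula proved earlier in the paper.

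The local length at each intersection point $x$ can be computed using the explicit Zink display / local model description of $\mathcal{N}_n$ near the superspecial points, as in the approach of Rapoport--Terstiege--Zhang. For minuscule $g$ this reduces to a count of sublattices between $L(g)$ and its dual satisfying explicit $\pi$-stability and Hermitian conditions, and I expect the answer to be given by a simple, uniform-in-$q$ formula depending only on $\inv(g)=(1,\ldots,1,0,\ldots,0)$ and the position of the stratum in the Bruhat--Tits building. Summing these local multiplicities weighted by the stratum-wise fixed-point counts yields a closed combinatorial expression for $\Int(g)$ purely in terms of the lattice combinatorics of $g$.

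On the analytic side, $\omega(\gamma)\cdot \partial\Orb(\gamma,\mathbf{1}_{S_n(\mathcal{O}_F)})$ for minuscule $\gamma$ matching $g$ admits a direct evaluation as a similar lattice sum, and the AFL identity \eqref{eq:AFL} then boils down to matching the two combinatorial expressions. The hardest step will be to arrange the Deligne--Lusztig fixed-point count in a form clean enough to align term by term with the analytic expression, since the two sides are naturally indexed by different combinatorial objects (Bruhat--Tits strata versus summands in the orbital integral derivative); the key feature that makes the comparison possible is the uniformity of the character formula in the parameter $q$, which is precisely what allows us to remove any restriction on the residual characteristic.
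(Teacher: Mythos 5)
Your overall architecture (reduce the AFL to an explicit evaluation of $\Int(g)$ in the minuscule case and match it against the analytic side of \cite[Proposition 8.2]{RTZ}) agrees with the paper, and your use of the Bruhat--Tits stratification and a character formula for closed fine Deligne--Lusztig varieties to organize the fixed points is also in the spirit of Theorem \ref{thm:app to AFL}. However, there is a genuine gap at the decisive step: you propose to compute the local length at each intersection point ``using the explicit Zink display / local model description of $\mathcal{N}_n$ near the superspecial points, as in the approach of Rapoport--Terstiege--Zhang,'' and you only \emph{expect} the answer to be a uniform formula. This is exactly the step that forces the restriction $p\ge\frac{n+1}{2}$ in \cite{RTZ} (admissibility of the ideal of local equations, needed to build the frames for Zink's windows), and the same restriction persists in the simplification of \cite{AFL}; for $p<\frac{n+1}{2}$ the per-point multiplicities are precisely the part that no existing method computes. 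So as written, your plan reintroduces the hypothesis on the residual characteristic that the theorem is supposed to remove, and the claim that ``uniformity in $q$ of the character formula'' removes it does not address this, since the character formula only enters your argument through the point count, not through the multiplicities.

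The paper's route avoids local multiplicities altogether. After identifying $\delta(\mathcal{N}_{n-1})\cap\mathcal{N}_n^{g}$ with the scheme-theoretic fixed locus $\mathcal{V}^{\bar g}$ of the smooth projective closed fine Deligne--Lusztig variety $\mathcal{V}$ (via \cite[Proposition 4.1.2]{AFL}, extended to general $F$ using \cite{Cho2018}), one uses smoothness of $\mathcal{V}$ and Artinian-ness of $\mathcal{V}^{\bar g}$ to see that the \emph{total} $k$-length equals the Lefschetz number $\tr(\bar g,\coh^*(\mathcal{V}))$, with no higher Tor corrections; this global number is then evaluated by the closure relation, parabolic induction, and the Deligne--Lusztig character formula (Theorems \ref{thm:char formula} and \ref{thm:unitary}). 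In that computation the factor $\frac{m_{Q_0}+1}{2}$ arises as the number of locally closed strata whose compactly supported cohomology contributes, not as a common per-point multiplicity, so no analysis of the local rings is ever needed. If you want to salvage your proposal, you should replace the per-point length computation (and the weighting of multiplicities by stratum-wise fixed-point counts, which in any case risks double counting since points lie on several closed strata) by this single global trace identity; otherwise you must supply a new argument for the local multiplicities valid for all odd $p$ and all $F$, which is precisely what is missing.
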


\begin{remark}
	When $F=\mathbb{Q}_p$ and $p>\frac{n+1}{2}$, this theorem was first proved by Rapoport--Terstiege--Zhang \cite{RTZ} (see also a simplified proof in \cite{AFL}). The same methods together with \cite{Cho2018} should prove the theorem for any $p$-adic field $F$ with the size of its residue field $q>\frac{n+1}{2}$. However, potential global applications to the AGGP conjectures require the truth of AFL at \emph{all} unramified places, thus it is desirable to remove the assumption that $q>\frac{n+1}{2}$. Our proof is different from \cite{RTZ} and treats all local fields $F$ (with odd residue characteristic, in order to define the Rapoport--Zink spaces) uniformly.
\end{remark}

\begin{remark}
	After this work was done, Zhang \cite{Zhang2019} has recently announced a proof of the arithmetic fundamental lemma when $F=\mathbb{Q}_p$ and $p>n$ (without assuming that $g$ is minuscule). 
\end{remark}

To state the explicit formula for $\Int(g)$, assume $g$ is minuscule and $\RZ_n^g \neq \emptyset$. Then it can be shown that $g$ stabilizes both $L(g)^\vee$ and $L(g)$, and acts as an unitary operator on $\mathbb V:=L(g)^\vee/L(g)$, which has a natural structure of a Hermitian space over $\BF_{q^2}$. Let $\bar g\in \UU(\mathbb V)(\BF_q)$ be the induced element.

For any monic polynomial $Q \in  \BF_{q^2}[\lambda]$ with $Q(0)\neq 0$, we define its \emph{reciprocal polynomial} $Q^*$ by replacing each root $x\in k^{\times}$ of $Q$ with $x^{-q}$ (with multiplicities). We say $Q$ is \emph{self-reciprocal} if $Q=Q^*$.

Let $f\in \mathbb{F}_{q^2}[\lambda]$ be the characteristic polynomial of $\bar g$. Then $f$ is self-reciprocal. For any monic irreducible  $Q\in\BF_{q^2} [\lambda]$, we denote the multiplicity of $Q$ in $f$ by $m_Q$. 

\begin{theorem}[Theorem \ref{thm:app to AFL}]\label{thm:AFLformula}
	Assume $g$ is minuscule and $\Int(g)\neq 0$. Then there is a unique monic irreducible self-reciprocal $Q_0\in \BF_{q^2}[\lambda]$ such that $m_{Q_0}$ is odd. We have $$\Int(g)=\frac{m_{Q_0}+1}{2}\cdot \deg Q_0\cdot \prod_{\{ Q, Q^* \}}(1+m_Q).$$ Here the product is over pairs $\set{Q, Q^*}$ of monic irreducible non-self-reciprocal polynomials in $\BF_{q^2}[\lambda]$ with non-zero constant terms.
\end{theorem}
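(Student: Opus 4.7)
The plan is to translate the arithmetic intersection problem into a fixed-point count on a closed fine Deligne--Lusztig variety and then apply the character formula of the earlier sections.

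First, I would use the description of the reduced locus $(\mathcal{N}_n)_{\mathrm{red}}$ due to Vollaard--Wedhorn and its reinterpretation in terms of basic loci of Coxeter type carried out in the body of the paper. Each irreducible component is a classical Deligne--Lusztig variety for a unitary group attached to a vertex lattice. Since $g$ is minuscule and $\mathcal{N}_n^g\ne\emptyset$, the lattice $L(g)^\vee$ is distinguished, and $\mathbb V=L(g)^\vee/L(g)$ is the associated Hermitian space over $\mathbb F_{q^2}$. The closed immersion $\delta:\mathcal{N}_{n-1}\hookrightarrow \mathcal{N}_n$ cuts out, inside the component indexed by $L(g)^\vee$, a closed fine Deligne--Lusztig subvariety $X$ inside the relevant flag variety for $\UU(\mathbb V)$. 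The minuscule hypothesis on $g$, together with the length-one intersection calculation of \cite{RTZ}, implies that each intersection point contributes $1$, so that
\[
\Int(g)=\# X^{\bar g}(k).
\]

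Next, I would apply the character formula for closed fine Deligne--Lusztig varieties established earlier in the paper to the pair $(X,\bar g)$. To extract a closed form, decompose $\mathbb V=\bigoplus_Q \mathbb V_Q$ into generalized $\bar g$-eigenspaces grouped by the monic irreducible factors $Q$ of $f$. For a non-self-reciprocal pair $\{Q,Q^*\}$, the summand $\mathbb V_Q\oplus \mathbb V_{Q^*}$ is a hyperbolic Hermitian subspace, and the $\bar g$-invariant isotropic data in it of admissible dimension are indexed by an integer $0\le j\le m_Q$; this contributes the factor $(1+m_Q)$. For a self-reciprocal $Q$, the summand $\mathbb V_Q$ is itself a non-degenerate Hermitian subspace on which $\bar g$ acts as an isometry. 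A parity computation using the discriminant of the Hermitian form on $\mathbb V$, forced by the non-splitness of $C_n$, combined with the hypothesis $X^{\bar g}\ne\emptyset$, shows that exactly one self-reciprocal $Q_0$ can have odd multiplicity, yielding the uniqueness claim. Self-reciprocal factors with even multiplicity then contribute trivially, while the contribution of $Q_0$ unwinds to $\tfrac{m_{Q_0}+1}{2}\cdot \deg Q_0$, assembling to the formula in the statement.

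The main obstacle is the first step: realizing $\delta(\mathcal{N}_{n-1})\cap \mathcal{N}_n^g$ as the $\bar g$-fixed locus of a single closed fine Deligne--Lusztig subvariety, identifying which fine Deligne--Lusztig variety precisely, and verifying multiplicity one at each point in the minuscule case. Once this identification is established, the remaining combinatorial extraction is controlled by the character formula and the well-understood structure of unitary isometries over $\mathbb F_{q^2}$; the subtlest piece of that book-keeping is the parity argument singling out the distinguished self-reciprocal factor $Q_0$.
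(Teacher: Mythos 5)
Your opening reduction contains a genuine error that undermines the whole strategy. You assert that the minuscule hypothesis ``together with the length-one intersection calculation of \cite{RTZ}'' gives $\Int(g)=\# X^{\bar g}(k)$, i.e.\ that each intersection point contributes $1$. This is false: the scheme $\delta(\mathcal{N}_{n-1})\cap\mathcal{N}_n^g\cong\mathcal{V}^{\bar g}$ is Artinian but in general non-reduced, and in \cite{RTZ} and \cite{AFL} the local length at \emph{each} fixed point is computed to be $\frac{m_{Q_0}+1}{2}$, not $1$ (and that local computation is exactly what forces the hypothesis $p\ge\frac{n+1}{2}$ in those papers). If you really counted fixed points with multiplicity one you would obtain $\deg Q_0\cdot\prod(1+m_Q)$ and miss the factor $\frac{m_{Q_0}+1}{2}$ altogether; if instead you import the multiplicity from \cite{RTZ} you reintroduce the restriction on $p$ and make the argument circular. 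The paper's proof is designed precisely to avoid any local multiplicity computation: after identifying the intersection with $\mathcal{V}^{\bar g}$ (your first step, which is correct and is \cite[Proposition 4.1.2]{AFL} together with Lemma \ref{lem:identify V}), one notes that $\Int(g)$ equals the total $k$-length of $\mathcal{V}^{\bar g}$ (the higher Tor terms vanish since $\mathcal{V}$ is smooth and $\mathcal{V}^{\bar g}$ Artinian), and this length is computed by the Lefschetz fixed point formula as $\tr(\bar g,\coh^*(\mathcal{V}))=\tr(\bar g,J,\mathscr L)$, which is then evaluated by the character formula of Theorem \ref{thm:unitary}. In that evaluation the factor $\frac{m_{Q_0}+1}{2}$ appears as the number $\abs{\mathscr I}$ of contributing strata, not as a pointwise multiplicity; your sketch never accounts for this summation over strata.

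A secondary issue: your mechanism for the uniqueness of $Q_0$ (``a parity computation using the discriminant of the Hermitian form on $\mathbb V$, forced by the non-splitness of $C_n$'') does not work, since non-degenerate Hermitian spaces over $\BF_{q^2}/\BF_q$ of a given dimension are all isomorphic, so there is no discriminant obstruction to exploit. In the paper, uniqueness follows from the structure of the rational points of the Coxeter-type maximal torus $T_i\subset G_i=\UU_{2(n+1-i)+1}$: any contributing $\gamma$ has characteristic polynomial $Q^m$ with $Q$ self-reciprocal and $m$ odd (Theorem \ref{thm:about Ti, unitary}), and the flag-counting bijection of Lemma \ref{lem:counting flags, unitary} gives $f_g=Q^mUU^*$, forcing $Q$ to be the unique self-reciprocal factor of odd multiplicity. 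The combinatorial factors $(1+m_Q)$ likewise come from counting monic $U$ with $UU^*=f_g/Q_0^{m_i}$ (Lemma \ref{lem:MMM}), not from a direct eigenspace decomposition of $\mathbb V$ as you describe, although your heuristic points in a similar direction.
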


Theorem \ref{thm:AFL} then follows immediately from Theorem \ref{thm:AFLformula} and the explicit formula for the analytic side given in  \cite[Proposition 8.2]{RTZ}.

\begin{remark}
	Theorem \ref{thm:AFLformula} is also used to prove the minuscule case of Liu's arithmetic fundamental lemma for Fourier--Jacobi cycles, see \cite[Appendix E]{Liu2018}.
\end{remark}

\begin{remark}\label{rem:orthogonalAFL} In Theorem \ref{thm:app to RZO} we also establish an analogous arithmetic intersection formula for GSpin Rapoport--Zink spaces arising from the AGGP conjectures for orthogonal groups. This provides a new proof of the main result of \cite{RZO}, and also removes the assumption that $p\ge \frac{n+1}{2}$ in \textit{loc.~cit.} 
\end{remark}

\subsection{Computing the arithmetic intersection}
The starting point of the proof of Theorem \ref{thm:AFLformula} is the observation made in \cite[Proposition 4.1.2]{AFL} that, in the minuscule case, the formal scheme (\ref{eq:intersection}) can be identified with the fixed point scheme $\mathcal{V}^{\bar g}$ of an explicitly given smooth projective variety $\mathcal{V}$ over $k$, under a finite-order automorphism $\bar g$. It also turns out that $\mathcal V^{\bar g}$ is an Artinian scheme. Hence $\Int(g)$ is given by the $k$-length of $\mathcal V^{\bar g}$.

In order to compute the $k$-length of $\mathcal V^{\bar g}$, there are two apparent approaches. One approach, taken in \cite{AFL}, is to explicitly study all the local equations. The other approach, which we take in the current paper, is to compute it using the Lefschetz trace formula. Thus we obtain 
\begin{align}\label{eq:int(g)}\Int (g) = \tr\left(\bar g \mid \coh^*(\mathcal{V})\right),
\end{align} where $\coh^*(\mathcal{V})$ denotes the \'etale ${\QQ}_{\ell}$-cohomology of $\mathcal V$, for a fixed prime $\ell \neq p$.

To compute the right hand side of (\ref{eq:int(g)}), we utilize the fact that the variety $\mathcal{V}$ is the closure of a generalized Deligne--Lusztig variety in a partial flag variety of the unitary group $\mathbb G=\UU(\mathbb V)$ over $\mathbb{F}_q$. To be precise, let $G:=\mathbb G_k$, and let $\s$ be the Frobenius automorphism of $k$ over $\BF_q$. Then $\mathcal{V}$ is the closure inside $G/P$ of the generalized Deligne--Lusztig variety $$X_P(w):=\{ h P \in G/P: h^{-1}\sigma(h)\in P w P\},$$ for a certain standard parabolic subgroup $P \subset G$ and a certain $w\in W_P \backslash W / W_P$. Here $W$ denotes the Weyl group of $G$ and $W_P$ denotes the parabolic subgroup of $W$ corresponding to $P$. The automorphism $\bar g$ of $\mathcal V$ is given by the natural action of the group element $\bar g \in \mathbb G(\BF_q)$. 

Vollaard \cite[Theorem 2.15]{Vollaard2010} constructed a nice stratification
\begin{equation}\label{eq:Vol}
\mathcal{V}=\bigsqcup_i X_i 
\end{equation} of $\mathcal V$ into finitely many locally closed strata $X_i$, where each $X_i$ is the image in $G/P$ of a generalized Deligne--Lusztig variety in $G/P_i$ for a different parabolic subgroup $P_i \subset G$. This stratification is remarkable because it is different from the naive decomposition 
$$\mathcal V = \overline {X_P(w)} = \bigsqcup _{w' \in W_P \backslash W /  W_P, w'\leq w} X_P(w').$$
In fact, the stratification (\ref{eq:Vol}) is a special example of \emph{stratification into fine Deligne--Lusztig varieties}, which will be discussed in the next subsection \S \ref{sec:char-form-fine}. Now each $X_i$ turns out to be a fine Deligne--Lusztig variety in $G/P$, and can be related via parabolic induction to a classical Deligne--Lusztig variety in the full flag variety of a Levi subgroup of $G$. In this way, the computation of the right hand side of (\ref{eq:int(g)}) reduces to computing the characters on the cohomology with compact support $\coh ^*_c (X_i)$ for each $X_i$, and eventually reduces to the classical Deligne--Lusztig character formula in \cite{DL}.

We thus place the problem of computing the right hand side of (\ref{eq:int(g)}) into the more general framework of developing a character formula for fine Deligne--Lusztig varieties and their closures.

\subsection{A character formula for fine Deligne--Lusztig varieties}\label{sec:char-form-fine} Let $\mathbb F_q$ be a finite field. Let $k = \overline {\BF}_q$, and let $\sigma$ be the Frobenius automorphism of $k$ over $\BF_q$. Let $\mathbb G$ be a connected reductive group over $\mathbb{F}_q$. Let $G = \mathbb G_{k}$, and let $W$ be the Weyl group of $G$. Let $J$ be a subset of the simple reflections in $W$. Let $W_J$ be the subgroup of $W$ generated by $J$, and let $P_J$ be the corresponding standard parabolic subgroup of $G$. Let ${}^JW$ be the set of minimal length coset representatives of $W_J\backslash W$. For $w\in {}^JW$, we have the associated  \emph{fine Deligne--Lusztig variety} $$X_{J, w}=\{g P_J \in G/P_J; g \i \s(g) \in P_J \cdot_\s B w B\},$$ where $\cdot_\s$ is the $\s$-conjugation action. When $J = \emptyset$, $X_{\emptyset , w}$ recovers the classical Deligne--Lusztig variety $X_w$ inside the full flag variety of $G$, associated to $w$.

In Definition \ref{defn:unbranched}, we will introduce the notion of a \emph{$\sigma$-unbranched datum} $(J, \mathscr{L})$, where $J$ is a set of simple reflections in $W$, and $\mathscr{L}$ is a sub-diagram of the Dynkin diagram of $G$ satisfying certain axioms with respect to $J$. Associated to such $(J, \mathscr{L})$, we will construct canonically a finite sequence of elements $w_i\in {}^JW$, such that we have the following simple closure relation (see Corollary \ref{cor:closure})
\begin{equation}
\label{eq:fineDLstrata}
\overline{X_{J,w_1}}=\bigsqcup_{i} X_{J,w_i}.  
\end{equation}
The above stratification subsumes (\ref{eq:Vol}) as a special case.
Moreover, for each $i$ we will construct a rational parabolic subgroup $\mathbb P_i\subset \mathbb G$, and a projection to a reductive group $\mathbb P_i\rightarrow \mathbb G_i$ over $\BF_q$, such that $w_i$ can be naturally viewed as an element of the Weyl group $W_i$ of $G_i: =\mathbb G_{i,k}$. We show that each fine Deligne--Lusztig variety $X_{J,w_i}$ is related via parabolic induction to the classical Deligne--Lusztig variety $X_{w_i}^{\mathbb G_i}$ in the full flag variety of $G_i$ associated to $w_i$ (see Proposition \ref{prop:inductive}): $$X_{J, w_i} \cong \mathbb  G(\BF_q) \times^{\mathbb P_i(\BF_q)} X^{\mathbb G_i}_{w_i} .$$ 

For each $i$, we fix a $\sigma$-stable  maximal torus $T_i\subset G_i$ of type $w_i$. Now we are ready to state our main character formula.

\begin{theorem}[Theorem \ref{thm:char formula}]\label{thm:mainfixedpoint}
	Assume $(J, \mathscr{L})$ is a $\sigma$-unbranched datum. Let $w_i, \mathbb P_i, \mathbb G_i, T_i$ be as above. Let $g\in \mathbb G(\mathbb{F}_q)$ be a regular element. Then
	\begin{align*}
	\tr(g \mid \coh ^*(\overline{X_{J,w_1}})) & =\sum_{i} \tr(g \mid \coh ^*_c(X_{J,w_i})) \\ \nonumber & =\sum_i \sum_{\gamma\in \Gamma_i}\#\mathcal{M}_i^{g, \gamma}  \cdot \frac{ |\mathbb G_{i,\gamma} (\BF_q)| } {| \mathbb G_{i,\gamma}^0 (\BF_q)|}  \cdot \big| T_i \cap ({}^{\mathbb G_i(\BF_q)} \gamma_i )\big | .    \end{align*}
	Here we have
	\begin{itemize}
		\item $\Gamma_i$ is a complete set of representatives of elements in $T_i(\mathbb{F}_q)$ modulo $\mathbb G_i(\mathbb{F}_q)$-conjugacy.
		\item $\mathcal{M}_i^{g}:=\{r\in \mathbb G(\mathbb{F}_q)/\mathbb P_i(\BF_q);  r^{-1}gr\in \mathbb P_i(\mathbb{F}_q)\}$, and $\mathcal{M}_i^{g,\gamma}\subset \mathcal{M}_i^{g}$ consists of those $r\in \mathcal{M}_i^g$ such that the semi-simple part of the projection of $r^{-1}gr$ to $\mathbb G_i$ is $\mathbb G_i(\mathbb{F}_q)$-conjugate to $\gamma$.
		\item ${}^{\mathbb G_i(\BF_q)} \gamma_i$ is the $\mathbb G_i(\BF_q)$-conjugacy class of $\gamma_i$.
	\end{itemize}
\end{theorem}
\subsection{Four families of fine Deligne--Lusztig varieties}\label{subsec:four families in into}
In \S \ref{sec:some-char-form}, we apply Theorem \ref{thm:mainfixedpoint} to fine Deligne--Lusztig varieties that arise from the basic loci of Shimura varieties of Coxeter type \cite{GH}. There are four infinite families of such fine Deligne--Lusztig varieties, where the $\BF_q$-groups $\mathbb G$ are respectively the even non-split special orthogonal group, the odd special orthogonal group, the symplectic group, and the odd unitary group.

In all these cases, we obtain an explicit formula for $\tr (g \mid \coh ^* (\overline {X_{J,w_1}}))$, for $g \in \mathbb G(\BF_q)$ whose image under the standard representation is regular. Our formula is in terms of the characteristic polynomial of $g$, subsuming the formula in Theorem \ref{thm:AFLformula} as a special case. See Theorems \ref{thm:RZO case}, \ref{thm:odd ortho}, \ref{thm:symp}, \ref{thm:unitary}. The odd unitary cases and the even non-split special orthogonal cases are relevant to the AGGP conjectures for unitary and orthogonal groups respectively, and our formulas lead to the arithmetic intersection formulas in Theorem \ref{thm:AFLformula} and Remark \ref{rem:orthogonalAFL}.  

\subsection{Further remarks on Theorem \ref{thm:AFLformula}} Arguably the most difficult part of Theorem \ref{thm:AFLformula} is to compute the intersection multiplicity at each point of intersection in (\ref{eq:intersection}). The computation in \cite{RTZ} uses Zink's theory of windows and displays to compute the local equations of (\ref{eq:intersection}). It requires explicitly writing down the window of the universal deformation of $p$-divisible groups. The assumption $p> \frac{n+1}{2}$ made in \textit{loc.~cit.~}ensures that the ideal of local equations is admissible (see the last paragraph of \cite[p.~1661]{RTZ}), which is crucial in order to construct the frames for the relevant windows needed in Zink's theory.

As mentioned above, the starting point of the simplified proof in \cite{AFL} is that the intersection (\ref{eq:intersection}) can be identified with $\mathcal{V}^{\bar g}$, and thus a deformation-theoretic problem of $p$-divisible groups is transformed to a purely algebro-geometric problem over $k$. When $p>\frac{n+1}{2}$, the computation of $\mathcal{V}^{\bar g}$ is further reduced in \cite{AFL} to a more elementary fixed point problem of a linear transformation on a projective space. However, when $p\le\frac{n+1}{2}$ the multiplicities remain mysterious.

Our proof of Theorem \ref{thm:AFLformula} shares the same starting point as \cite{AFL}. The new observation is the inductive structure of fine Deligne--Lusztig varieties, which allows us to exploit the full power of the classical character formula of Deligne--Lusztig. Our approach circumvents the need to analyze the local structure of (\ref{eq:intersection}), and gives the desired formula without the extra assumption on $p$. 

Finally, we remark that in the computation in \cite{RTZ} or \cite{AFL}, the number $\frac{m_{Q_0} +1}{2}$ in Theorem \ref{thm:AFLformula} appears as the common intersection multiplicity at each point of intersection. In our current computation, we obtain a different geometric interpretation of this number, as the \emph{number of the strata} $X_i$ whose $\coh^*_c$ contribute non-trivially to the trace (\ref{eq:int(g)}). (In the proofs of Theorem \ref{thm:RZO case} and Theorem \ref{thm:unitary}, this number appears as $\abs{\mathscr I}$.) As a simple illustration of this phenomenon, consider the automorphism $f(x) = x+1$ of order $p$ on $\mathbb P^1$ over $k$. The only fixed point is $\infty$, which has multiplicity $2$. On the other hand, we have an $f$-stable stratification $\mathbb{P}^1=\mathbb{A}^1 \sqcup \{\infty\}$, which gives $$\tr(f \mid  \coh ^*(\mathbb P^1)) = \tr (f \mid \coh^*_c(\mathbb A^1)) + \tr (f \mid \coh ^*(\{\infty\})). $$ Note that $\tr (f \mid \coh^*_c(\mathbb A^1)) = \tr (f \mid \coh ^*(\{\infty\})) =1$. Thus the multiplicity $2$ also appears as the number of contributing strata. 

\subsection{Organization of the paper}In \S \ref{sec:fine DL}, we introduce the notion of a $\s$-unbranched datum, and study the closure relation and inductive structure for the fine Deligne--Lusztig varieties associated to a $\s$-unbranched datum, culminating in the proof of the general character formula Theorem \ref{thm:mainfixedpoint} (Theorem \ref{thm:char formula}). In \S \ref{sec:basic loci}, we recall the four infinite families of fine Deligne--Lusztig varieties arising from basic loci of Coxeter type in Shimura varieties. In each case we identify the unique $\s$-unbranched datum. In \S \ref{sec:some-char-form}, we apply the general character formula to each of the four families in \S \ref{sec:basic loci}, obtaining explicit character formulas in terms of characteristic polynomials (Theorems \ref{thm:RZO case}, \ref{thm:odd ortho}, \ref{thm:symp}, \ref{thm:unitary}). In \S \ref{sec:application}, we apply the results in \S \ref{sec:some-char-form} to  obtain the arithmetic intersection formulas in Theorem \ref{thm:AFLformula} and Remark \ref{rem:orthogonalAFL} (Theorem \ref{thm:app to AFL} and Theorem \ref{thm:app to RZO}).  
\subsection{Notations and conventions} Let $k $ be an algebraically closed field. For a smooth scheme $X$ over $k$, we denote by $\coh^*(X)$ and $\coh^* _c(X)$ the \'etale $\QQ_{\ell}$-cohomology and the \'etale $\QQ_{\ell}$-cohomology with compact support respectively, for a fixed prime $\ell$ which is invertible in $k$.

For any linear algebraic group $G$ over $k$, we identify $G$ with its $k$-points. If a subfield $k_0$ of $ k$ and a $k_0$-form $\mathbb G$ of $G$ are given in the context, we often abuse notation to write $G(k_0)$ for $\mathbb G(k_0)$.

By convention, a \emph{quadratic space} means a finite-dimensional vector space over a field equipped with a non-degenerate quadratic form. Since we will never consider characteristic $2$ fields, we shall specify the quadratic form by specifying its associated bi-linear  pairing. Thus the quadratic form is recovered from the bilinear pairing $[\cdot, \cdot ]$ as $x\mapsto [x,x]/2$. Similarly, \emph{Hermitian forms} and \emph{symplectic forms} are all understood to be non-degenerate.

For any field $F$, we denote by $F[\lambda] ^{\mathrm{monic}}$ the set of monic polynomials in the polynomial ring $F[\lambda]$.

\subsection{Acknowledgments} X.~H.~was partially supported by the NSF grant DMS-1801352. C.~L.~was partially supported by an AMS travel grant for ICM 2018 and the NSF grant DMS-1802269. Y.~Z.~was partially supported by the NSF grant DMS-1802292. We would like to thank the Hausdorff Center for Mathematics for the hospitality, during the Conference on the Occasion of Michael Rapoport's 70th Birthday. We would also like to thank the referees for careful reading and useful comments.

\section{Fine Deligne--Lusztig varieties}\label{sec:fine DL}
\subsection{Basic setting and notations} Fix an odd prime $p$, and let $q$ be a power of $p$.  Let $k=\overline{ \BF}_q$ and $\s$ be the Frobenius automorphism of $k$ over $\BF_q$. 

Let $\mathbb G$ be a connected reductive group over $\BF_q$, and let $G = \mathbb G_k$. We fix a $\s$-stable Borel subgroup $B$ of $G$, with a Levi decomposition $B = TU$ which is also $\s$-stable. Let $W$ be the \emph{canonical Weyl group} of $G$ equipped with the canonical action of the Frobenius $\s$, as in \cite[\S 1.1]{DL}. Then using the pair $(T,B)$ we identify $W$ with $N_G(T)/T$, and the identification is $\s$-equivariant.

Let $\BS$ be the set of simple reflections in $W$. For any $J \subset \BS$, let $P_J \supset B$ be the standard parabolic subgroup of $G$ associated to $J$, and let $L_J$ be the standard Levi subgroup of $P_J$. Denote by $W_J$ the subgroup of $W$ generated by $J$ (called a parabolic subgroup of $W$). Thus $W_J$ is the Weyl group of $L_J$.

For $w \in W$, we denote by $\supp(w)$ the support of $w$, i.e., the set of simple reflections that occur in some (or equivalently, any) reduced expression of $w$. We define $$\supp_\s(w): =\bigcup_{i \in \BZ} \s^i(\supp(w)).$$ 

We recall the notion of Coxeter elements following \cite[7.3]{Sp}. For each $\s$-orbit in $\BS$, we pick a simple reflection. Let $c$ be the product of these simple reflections in any given order. We call such $c$ a {\it $\s$-twisted Coxeter element} of $W$. More generally, for a $\s$-stable subset $\Sigma\subset \BS$, we may consider $\s$-twisted Coxeter elements of the parabolic subgroup $W_{\Sigma }$. If $c$ is such an element, then $\supp_{\s} (c) = \Sigma$, and $\supp (c)$ is a complete set of representatives of the $\s$-orbits in $\Sigma$.

\subsection{Classical Deligne--Lusztig varieties}
For $w\in W$, the (classical) Deligne--Lusztig variety $X_w$ in the full flag variety $G/B$ is defined by $$X_w=\{g B \in G/B; g \i \s(g) \in B w B\}.$$
These Deligne--Lusztig varieties give a partition of the full flag variety $$G/B=\bigsqcup_{w \in W} X_w.$$
The closure relation is given by the Bruhat order $\leq $ of the Weyl group, i.e. for any $w \in W$, $$\overline{X_w}=\bigsqcup_{w' \le w} X_{w'}.$$

\subsection{Fine Deligne--Lusztig varieties} 
Let $J \subset \BS.$ Let $G/P_J$ be the partial flag variety of type $J$. In 1977, Lusztig introduced a partition of $G/P_J$ into fine Deligne--Lusztig varieties. 

We follow the approach in \cite[\S 3]{He-par}. Let ${}^J W$ be the set of minimal length coset representatives of $W_J \backslash W$. For any $w \in {}^J W$, we set $$X_{J, w}=\{g P_J \in G/P_J; g \i \s(g) \in P_J \cdot_\s B w B\},$$ where $\cdot_\s$ is the $\s$-conjugation action, i.e., $x\cdot_\s y : = x y \s(x)\i$. When $J = \emptyset$, we have $X_{\emptyset , w} = X_w$. 

Then we have a partition $$G/P_J=\bigsqcup_{w \in {}^J W} X_{J, w}$$
into locally closed sub-varieties. 

The partial order $\le_{J, \s}$ on ${}^J W$ is introduced in \cite[Proposition 3.8]{He-piece} (see also \cite[4.7]{He-min}). For $w, w' \in {}^J W$, we write $$w \le_{J, \s} w'$$ if $u w \s(u) \i \le w'$ for some $u \in W_J$. By \cite[Proposition 3.13]{He-piece} and \cite[Corollary 4.6]{He-min}, $\le_{J, \s}$ is a partial order on ${}^J W$. Now we have 

\begin{theorem}\cite[Theorem 3.1]{He-par} \label{thm:closure}
	For $w \in {}^J W$, 
	\[\pushQED{\qed}
	\overline{X_{J, w}} =\bigsqcup_{w' \in {}^J W; w' \le_{J, \s} w} X_{J, w'}. \qedhere \popQED \]
\end{theorem}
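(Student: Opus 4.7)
The plan is to reduce the closure relation for $X_{J,w}$ on the partial flag variety $G/P_J$ to the classical Bruhat closure on $G/B$, via the smooth proper projection $\pi \colon G/B \to G/P_J$, and then translate the resulting subset of $W$ back into an index set inside ${}^J W$.

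The first and key step is to describe the $\sigma$-conjugation orbit $P_J \cdot_\sigma BwB$ as a union of Bruhat cells. Since $P_J = \bigsqcup_{u \in W_J} BuB$, one has
\[
P_J \cdot_\sigma BwB \;=\; \bigcup_{u \in W_J} (BuB)\cdot(BwB)\cdot\sigma(BuB)^{-1},
\]
and each term on the right is a union of Bruhat cells by the BN-pair relations. Using induction on $\ell(u)$ together with the minimality of $w$ as an element of ${}^J W$, I would prove the disjoint union description
\[
P_J \cdot_\sigma BwB \;=\; \bigsqcup_{v \in S_w} BvB, \qquad S_w := \{v \in W : v \le u\, w\, \sigma(u)^{-1} \text{ for some } u \in W_J\}.
\]
Pulling back along $\pi$ then yields $\pi^{-1}(X_{J,w}) = \bigsqcup_{v \in S_w} X_v$. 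One also needs to check that $S_w$ is downward closed in the Bruhat order; combined with the classical closure relation $\overline{X_v} = \bigsqcup_{v' \le v} X_{v'}$, this gives $\overline{\pi^{-1}(X_{J,w})} = \bigsqcup_{v \in S_w} X_v$.

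Since $\pi$ is smooth and proper, $\overline{X_{J,w}} = \pi\bigl(\overline{\pi^{-1}(X_{J,w})}\bigr)$. For each $v \in W$ there is a unique $w' \in {}^J W$ such that $\pi(X_v) = X_{J,w'}$, characterized by $v \in S_{w'}$; so $X_{J,w'} \subset \overline{X_{J,w}}$ iff $S_{w'} \cap S_w \neq \emptyset$. The remaining combinatorial task is to identify this condition with $w' \le_{J,\sigma} w$. One direction is immediate: if $u_0 w'\sigma(u_0)^{-1} \le w$ for some $u_0 \in W_J$, then $v = u_0 w'\sigma(u_0)^{-1} \in S_w \cap S_{w'}$. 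The converse uses the minimality of $w'$ in ${}^J W$ to replace any $v \in S_w \cap S_{w'}$, which a priori satisfies $v \le u w \sigma(u)^{-1}$ and $v \le u' w'\sigma(u')^{-1}$, by a suitable $\sigma$-conjugate of the form $u_0 w'\sigma(u_0)^{-1}$ that is bounded by $w$ in Bruhat order; this is exactly $w' \le_{J,\sigma} w$.

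The main obstacle is the first step, namely the precise description of $P_J \cdot_\sigma BwB$ as the disjoint union indexed by $S_w$ and the verification that $S_w$ is downward closed. The threefold product $BuB \cdot BwB \cdot B\sigma(u)^{-1}B$ involves multiplications that in general do not respect length addition, so one cannot merely read off the answer from the Demazure product; the hypothesis $w \in {}^J W$ must be leveraged, via induction on $\ell(u)$ and a careful analysis of which simple reflections can absorb into $w$ from the left (none, by minimality) versus from the $\sigma$-twisted right. Equivalently, the downward-closedness of $S_w$ amounts to showing that $\le_{J,\sigma}$ is a well-defined partial order on ${}^J W$, which is the combinatorial content of \cite[Proposition 3.13]{He-piece} and \cite[Corollary 4.6]{He-min} and which I would invoke as input at this step.
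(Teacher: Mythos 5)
Your overall strategy (pull back along the proper projection $\pi\colon G/B\to G/P_J$, use the classical closure relation on $G/B$, push forward, and identify the combinatorics with $\le_{J,\sigma}$) is the natural one and is close in spirit to the literature; note the paper itself does not prove this statement but quotes it from \cite[Theorem 3.1]{He-par}. However, your key first step is false as stated. The set $P_J\cdot_\sigma BwB$ is only stable under simultaneous $\sigma$-conjugation by $B$, not under separate left and right $B$-translation, so your first display is only an inclusion $\subseteq$ (the right-hand side allows independent elements of $BuB$ on the two sides); and the claimed identity $P_J\cdot_\sigma BwB=\bigsqcup_{v\in S_w}BvB$ with $S_w$ Bruhat-downward-closed cannot hold. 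Already for $J=\emptyset$ the left-hand side is the single cell $BwB$ while your right-hand side is $\overline{BwB}$; and for $w=1$ with $J\neq\emptyset$ in the split case the left-hand side is all of $P_J$ (by Lang's theorem applied in $P_J$) while your right-hand side is just $B$, so neither containment is true in general. Moreover, if your identity together with downward closedness of $S_w$ held, then $\pi^{-1}(X_{J,w})$ would be closed, hence every $X_{J,w}=\pi(\pi^{-1}(X_{J,w}))$ would be closed, contradicting the very closure relation you are proving whenever $\le_{J,\sigma}$ is nontrivial. Relatedly, since every $S_{w'}$ contains $1$, these sets are not disjoint, so ``$v\in S_{w'}$'' cannot characterize the unique $w'$ with $\pi(X_v)\subset X_{J,w'}$, and your final criterion ``$S_{w'}\cap S_w\neq\emptyset$'' is not the right condition.

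What the argument actually needs is: (1) that $P_J\cdot_\sigma BwB$ is a union of \emph{full} $(B,B)$-double cosets $BvB$ over an index set $C_w$ which is in general not downward closed (for instance $C_1=W_J$, and $C_w$ contains the $W_J$-$\sigma$-conjugates $uw\sigma(u)^{-1}$ but is described via He's partial $\sigma$-conjugation action of $W_J$ on $W$ and minimal-length-element results, not by the BN-pair computation you sketch — these sets $C_w$, $w\in{}^JW$, partition $W$ and give the correct characterization of which piece a cell lies in); and (2) that the Bruhat-downward closure of $C_w$ equals $\bigsqcup_{w'\le_{J,\sigma}w}C_{w'}$, which is the real combinatorial content of the closure relation and goes well beyond the fact you propose to import from \cite{He-piece} and \cite{He-min}, namely that $\le_{J,\sigma}$ is a partial order. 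So as written the proposal asserts a false cell decomposition and defers to the cited references only the easier part of the combinatorics; steps (1) and (2), which constitute the substance of \cite[Theorem 3.1]{He-par}, are missing.
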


\subsection{The $\s$-unbranched datum} 
\label{subsec:setting}
We would like to single out certain cases where the right hand side of Theorem \ref{thm:closure} has a relatively simple description.
\begin{definition}\label{defn:unbranched}
	We say that a subset $J \subset \BS$ is \emph{$\s$-unbranched} if the following conditions hold.
	\begin{enumerate}
		\item The set $\BS -J$ is contained in one $\s$-orbit in $\BS$.
		\item There exists a sub-diagram $\mathscr L$ of the Dynkin diagram of $(\mathbb G, W, \BS)$ satisfying the following conditions.
		\begin{itemize}
			\item The diagram $\mathscr L$ is connected and without branching;
			\item The nodes of $\mathscr L$ form a complete set of representatives of the $\sigma$-orbits in $\BS$.  
			\item One (and hence exactly one) end-node of $\mathscr L$ is in $\BS - J$. 
	\end{itemize} 	\end{enumerate}
	We call a pair $(J, \mathscr L)$ as above a \emph{$\s$-unbranched datum} for $\mathbb G$. When we would like to emphasize the group $\mathbb G$, we write $(\mathbb G, J ,\mathscr L)$.
\end{definition}
\subsubsection{}
From now on we assume the existence of a $\s$-unbranched subset $J\subset \BS$, and fix a $\s$-unbranched datum $(J,\mathscr L)$ once and for all. Let $a$ be the number of nodes in $\mathscr L$. By assumption $\mathscr L$ is connected and without branching, with exactly one end-node in $\BS - J$. Hence we may canonically list the consecutive nodes in $\mathscr L$ as 
\begin{align}\label{eq:order of nodes}
\rr_1, \rr_2,\cdots, \rr_{a} \in \BS, 
\end{align}
with $\rr_{a} \in \BS - J$. Write $i_{\max} = a+1.$ 

For each $1 \leq  i \leq i_{\max}$, define
$$w_i : = \rr_a \rr_{a-1} \cdots \rr_{i}.$$ Here by convention
$w_{i_{\max}}: = 1.$
We also define 
\begin{align*}
\Sigma^{\flat}_i  & : = \supp_{\s} w_i = \bigcup _{j=i}^a \text{the $\s$-orbit of }\rr_{j}, \\
\Sigma _i & : = \begin{cases}
\text{the $\s$-orbit of }\rr_{i-1}, & \text{if }2\leq i \leq i_{\max},\\
\emptyset, &  \text{if }i =1,
\end{cases}\\
\Sigma ^{\sharp} _i & : = \BS - (\Sigma _i ^{\flat } \cup \Sigma _i ).
\end{align*}

\begin{lemma}\label{lem:disconn}
	For all $3 \leq i \leq a$ and $m \in \BZ$, the sets $\set{\s ^m (\rr_{i-2}), \s ^m (\rr_{i-3}),\cdots , \s ^m (\rr_1) }$ and $\set{\rr_a, \rr_{a-1},\cdots, \rr_i}$ are disconnected from each other.
\end{lemma}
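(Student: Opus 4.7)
My plan is to prove the equivalent combinatorial statement: for any $p, q \in \{1, \ldots, a\}$ with $q - p \geq 2$ and any $m \in \BZ$, the nodes $\rr_p$ and $\sigma^m(\rr_q)$ are non-adjacent in the Dynkin diagram $\Gamma$ of $\mathbb G$. The lemma then follows from this by $\sigma$-equivariance of adjacency: the edge $\sigma^m(\rr_p) - \rr_q$ exists if and only if $\rr_p - \sigma^{-m}(\rr_q)$ does, and varying $i \in \{3, \ldots, a\}$ together with $p \in \{1,\ldots,i-2\}$ and $q \in \{i,\ldots,a\}$ produces exactly the pairs with $q - p \geq 2$.

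I would first handle the degenerate sub-case where $\sigma^m(\rr_q) = \rr_q$. Since $\mathscr L$ contains exactly one representative of each $\sigma$-orbit in $\BS$, this forces $\sigma^m(\rr_q) \in \mathscr L$. The induced sub-diagram of $\Gamma$ on the nodes of $\mathscr L$ is, by the \emph{unbranched} hypothesis together with the \emph{connected} hypothesis, exactly the path $\rr_1 - \rr_2 - \cdots - \rr_a$ with no extra edges. Hence $\rr_p$ and $\rr_q = \sigma^m(\rr_q)$ are non-adjacent in $\Gamma$ because they are non-consecutive in this path.

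In the main sub-case $\sigma^m(\rr_q) \notin \mathscr L$, I would argue by contradiction, producing a genuine cycle inside $\Gamma$, which is impossible because every Dynkin diagram is a forest. Concretely, starting from $\rr_p$, I traverse the path $\rr_p, \rr_{p+1}, \ldots, \rr_q$ of length $q - p \geq 2$ inside $\mathscr L$; cross via the edge $\rr_q - \sigma^{-m}(\rr_p)$ obtained by applying $\sigma^{-m}$ to the hypothetical edge $\rr_p - \sigma^m(\rr_q)$; then traverse the $\sigma^{-m}$-translated path $\sigma^{-m}(\rr_p), \ldots, \sigma^{-m}(\rr_q)$; then cross via $\sigma^{-2m}$ applied to the hypothetical edge; and so on. Since $\sigma$ has finite order on $\BS$, after finitely many iterations the walk returns to $\rr_p$, producing a closed walk in $\Gamma$. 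A direct bookkeeping (using the assumption $\sigma^{-m}(\rr_p) \neq \rr_p$ to rule out trivial backtracking at the cross-over steps, and the fact that the path edges of distinct $\sigma^{-km}$-translates of $\mathscr L$ belong to distinct $\sigma$-orbits of edges in $\Gamma$) shows that the edges of this closed walk are pairwise distinct, whence the walk contains an honest cycle in $\Gamma$.

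The main obstacle is the bookkeeping in the last step: verifying that the constructed closed walk contains no repeated edge. The edges visited fall into two types — ``horizontal'' path edges inside $\sigma^{-km}(\mathscr L)$, and ``vertical'' crossing edges coming from $\sigma$-translates of the hypothetical edge — and one must rule out coincidences between them. I expect this to reduce to the finite-order action of $\sigma$ on $\BS$ combined with the observation that $\mathscr L$ has distinct representatives of distinct $\sigma$-orbits; alternatively, one may first establish the clean structural statement that the \emph{orbit graph} $\bar\Gamma$ (vertices $= \sigma$-orbits in $\BS$, edges $= \sigma$-orbits of $\Gamma$-edges between distinct vertex orbits) coincides as a graph with $\mathscr L$, using that $\mathscr L$ spans $\bar\Gamma$ as a connected sub-graph and that $\bar\Gamma$ is a tree with $a$ vertices and $a-1$ edges, matching the edge count of $\mathscr L$.
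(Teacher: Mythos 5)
Your skeleton—reduce to a single hypothetical edge $\{\rr_p,\s^m(\rr_q)\}$ with $q-p\ge 2$, translate it by powers of $\s$, and run around the resulting closed walk to contradict acyclicity of the Dynkin diagram—is the same as the paper's, and your reduction and the case $\s^m(\rr_q)=\rr_q$ are fine. The gap is exactly at the step you flag as the main obstacle: the claim that the closed walk has pairwise distinct edges does not follow from the ingredients you cite, and is in fact false in configurations satisfying all of them. First, the parenthetical assertion that the path edges of distinct $\s^{-km}$-translates of $\mathscr L$ lie in distinct $\s$-orbits of edges is backwards: $\{\s^{-km}\rr_t,\s^{-km}\rr_{t+1}\}$ lies in the \emph{same} edge-orbit as $\{\rr_t,\rr_{t+1}\}$ for every $k$. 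What you need is distinctness as literal edges, and this can fail: if $\s^m$ fixes both $\rr_t$ and $\rr_{t+1}$ for some interior $t$ while moving $\rr_p$, your walk traverses the edge $\{\rr_t,\rr_{t+1}\}$ once per lap. Concretely, take $\BS=\{u_1,u_2,v_1,v_2,w_1,w_2\}$ with $\s$ swapping $u_1\leftrightarrow u_2$, $w_1\leftrightarrow w_2$ and fixing $v_1,v_2$, and $\mathscr L=(u_1,v_1,v_2,w_1)$, $p=1$, $q=4$, $m=1$: the edges forced by $\mathscr L$ and $\s$-equivariance form a tree, the hypothetical edge is $\{u_1,w_2\}$, and your walk repeats $\{v_1,v_2\}$. (This graph is not an actual finite Dynkin diagram, having two branch nodes, but your bookkeeping uses nothing beyond forest-ness and the orbit structure, so it cannot close as described without a further structural input.)

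The property you actually need, and which does follow from your hypotheses, is the weaker one that the walk never backtracks: a backtrack inside a horizontal stretch would force $\rr_{t-1}=\rr_{t+1}$, and a backtrack at a crossing would force $\rr_p$ and $\rr_{q-1}$, or $\rr_{p+1}$ and $\rr_q$, to lie in the same $\s$-orbit—impossible since distinct nodes of $\mathscr L$ represent distinct orbits and $q-p\ge 2$. Since a closed walk of positive length in a forest must backtrack (equivalently, a non-backtracking closed walk contains a cycle), this finishes the argument; it is in substance the paper's proof, which runs the same translated walk and derives the contradiction by showing two consecutive members of the list would have to coincide, forcing $\rr_\beta=\rr_{\beta+1}$ or $\rr_j=\s^m\rr_l$. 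Your alternative route via the orbit graph $\bar\Gamma$ has the same issue one level up: the assertion that $\bar\Gamma$ is a tree (hence has exactly $a-1$ edges) is essentially the statement to be proved and is not automatic from $\Gamma$ being a forest; it needs its own argument (for instance a fixed-subtree or Euler-characteristic count over the powers of $\s$), so as written it assumes the conclusion.
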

\begin{proof}
	Firstly, we observe that these two sets do not share any common element, because of the second condition in Definition \ref{defn:unbranched} (2). Now suppose that the two sets are connected. Then there exist integers $j$ and $l$, satisfying $$ 1 \leq l \leq i -2 < i \leq j \leq a ,$$ such that $\rr_j$ is connected with $\s^m (\rr _l)$. Choose $n \in \mathbb N$ such that $\s ^{nm} (\rr _j) = \rr_j$. Then in the list 
	$$ \rr_j, \s ^m \rr_l , \s ^m \rr_{l+1} ,\cdots, \s ^m \rr _j , \s^{2m} \rr_l, \cdots, \s ^{2m} \rr_j, \cdots, \s^{nm} \rr_l, \cdots, \s ^{nm} \rr_j,$$ each member is connected with (and unequal to) its predecessor, and the last member is equal to the first member. Since the Dynkin diagram does not contain loops, there must be a member in the list which equals the second member following it.
	Hence one of the following three situations must happen: 
	\begin{enumerate}
		\item There exist integers $\alpha,\beta$, with $l \leq \beta \leq j-2$, such that $\s^{\alpha m} \rr_{\beta} = \s^{\alpha m} \rr_{\beta+2}$. 
		\item There exists an integer $\alpha$, such that $\s^{\alpha m} \rr_{j-1} = \s^{(\alpha+1) m} \rr_l$. 
		\item There exists an integer $\alpha$, such that $\s^{\alpha m} \rr_{j} = \s^{(\alpha+1) m} \rr_{l+1}$. 
	\end{enumerate}
	Since $j-l \geq 2$, each of these three situations contradicts with the second condition in in Definition \ref{defn:unbranched} (2).
\end{proof}
\begin{lemma}\label{lem:about Sigma}
	For each $1 \leq i \leq i_{\max}$, we have $$\BS = \Sigma _i ^{\flat} \sqcup \Sigma _i \sqcup \Sigma _i ^{\sharp}.$$ The sets $\Sigma _i ^{\flat}, \Sigma _i , \Sigma _i ^{\sharp}$ are all $\sigma$-stable. Moreover $\Sigma _i^{\flat}$ is disconnected from $\Sigma _i ^{\sharp}$.
\end{lemma}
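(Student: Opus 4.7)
The plan is to verify each of the three claims in turn, observing that the first two are essentially bookkeeping given the axioms in Definition \ref{defn:unbranched}, while the third is the substantive content and will be reduced to Lemma \ref{lem:disconn}.

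First I would establish the partition $\BS = \Sigma_i^\flat \sqcup \Sigma_i \sqcup \Sigma_i^\sharp$. Since $\Sigma_i^\sharp$ is defined as $\BS \setminus (\Sigma_i^\flat \cup \Sigma_i)$, it suffices to show that $\Sigma_i^\flat$ and $\Sigma_i$ are disjoint. This follows from the axiom that $\rr_1, \dots, \rr_a$ form a complete set of representatives of the $\sigma$-orbits in $\BS$: the $\sigma$-orbits of $\rr_{i-1}$ and $\rr_j$ for $j \ge i$ are pairwise distinct, hence pairwise disjoint. The $\sigma$-stability of all three sets is immediate, because $\Sigma_i^\flat$ and $\Sigma_i$ are by construction unions of $\sigma$-orbits, and therefore so is their complement $\Sigma_i^\sharp$.

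For the disconnectedness claim, I would first dispense with degenerate cases. When $i = i_{\max}$ we have $\Sigma_i^\flat = \emptyset$, and when $i \in \{1,2\}$ the axiom that the $\sigma$-orbits of $\rr_1, \dots, \rr_a$ exhaust $\BS$ forces $\Sigma_i^\sharp = \emptyset$; in each of these cases the claim is vacuous. The main case is $3 \le i \le a$, where
\[
\Sigma_i^\sharp = \bigcup_{l=1}^{i-2} \bigcup_{m \in \BZ} \{\sigma^m \rr_l\}, \qquad \Sigma_i^\flat = \bigcup_{j=i}^{a} \bigcup_{m' \in \BZ} \{\sigma^{m'} \rr_j\}.
\]
To show no element of the first set is connected to an element of the second, I pick $\sigma^m \rr_l \in \Sigma_i^\sharp$ and $\sigma^{m'} \rr_j \in \Sigma_i^\flat$ and apply the diagram automorphism $\sigma^{-m'}$, which preserves the edge relation. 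This reduces to showing that $\sigma^{m-m'} \rr_l$ is not connected to $\rr_j$, which is exactly the conclusion of Lemma \ref{lem:disconn} applied to the pair of sets $\{\sigma^{m-m'}(\rr_{i-2}),\dots,\sigma^{m-m'}(\rr_1)\}$ and $\{\rr_a,\dots,\rr_i\}$.

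I do not anticipate any serious obstacle; the argument is entirely formal once Lemma \ref{lem:disconn} is in hand. The only minor care needed is to keep track of the edge cases $i = 1, 2, i_{\max}$ where one of the sets $\Sigma_i^\flat$, $\Sigma_i^\sharp$, or $\Sigma_i$ is empty, and to invoke the $\sigma$-invariance of the edge relation in the Dynkin diagram (which holds because $\sigma$ acts by diagram automorphisms) when reducing to Lemma \ref{lem:disconn}.
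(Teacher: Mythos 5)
Your proof is correct and follows essentially the same route as the paper's: the partition comes from the fact that $\rr_1,\dots,\rr_a$ represent distinct $\sigma$-orbits, $\sigma$-stability is immediate from the definitions, and the disconnectedness is reduced to Lemma \ref{lem:disconn}. You merely spell out the edge cases $i\in\{1,2,i_{\max}\}$ and the $\sigma$-equivariance of the edge relation, which the paper leaves implicit.
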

\begin{proof}
	The first assertion holds because $\rr_1,\cdots, \rr_a $ lie in distinct $\sigma$-orbits in $\BS$. The second assertion follows easily from the definition. The third assertion follows from Lemma \ref{lem:disconn}.
\end{proof}

Note that each $w_i$ is $\s$-twisted Coxeter in $W_ {\Sigma_i ^{\flat}}$, and $W_ {\Sigma_1 ^{\flat}} = W_{\BS} = W$. We further have the following result.

\begin{lemma}\label{lem:w_i}
	For each $1 \leq i \leq i_{\max}$, we have $w_i \in {}^J W$. Moreover $$\{ w \in {}^JW ; w \leq _{J,\s} w_1 \} = \{w_1, w_2,\cdots, w_{i_{\max}}\}. $$ 
\end{lemma}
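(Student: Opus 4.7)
The plan is to prove the two assertions of the lemma separately.

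For the first assertion that $w_i \in {}^J W$, it suffices to show the left descent set $L(w_i) = \{s \in \BS : sw_i < w_i\}$ equals $\{\rr_a\}$, which lies in $\BS - J$. Since $\rr_i, \ldots, \rr_a$ form a sub-path in $\mathscr L$ without branching, the word $w_i = \rr_a \rr_{a-1} \cdots \rr_i$ is reduced of length $a - i + 1$; within the sub-Coxeter subgroup $W_{\{\rr_i, \ldots, \rr_a\}}$, a direct computation (viewing $w_i$ as a cyclic permutation in type $A_{a-i+1}$) shows the unique left descent is $\rr_a$. For $s \in \BS \setminus \{\rr_i, \ldots, \rr_a\}$, I will compute $w_i^{-1}(\alpha_s) = \rr_i \rr_{i+1} \cdots \rr_a(\alpha_s)$ step by step, applying $\rr_a$ first: since $s \neq \rr_l$ for any of the reflections used, each $\rr_l$ preserves the coefficient of $\alpha_s$ (keeping it equal to $1$) and only adds a non-negative integer multiple of $\alpha_l$. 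By induction on the step, $w_i^{-1}(\alpha_s)$ is a non-negative integer combination of $\alpha_s, \alpha_i, \ldots, \alpha_a$ with coefficient $1$ on $\alpha_s$, hence a positive root, so $s \notin L(w_i)$. This gives $L(w_i) = \{\rr_a\} \subseteq \BS - J$, proving $w_i \in {}^J W$.

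For the equality $\{w \in {}^J W : w \leq_{J, \s} w_1\} = \{w_1, \ldots, w_{i_{\max}}\}$, the inclusion $\supseteq$ is immediate: each $w_i$ is a suffix of the reduced expression $\rr_a \rr_{a-1} \cdots \rr_1$ of $w_1$, so $w_i \leq w_1$ in Bruhat order, and taking $u = 1 \in W_J$ yields $w_i \leq_{J, \s} w_1$.

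The reverse inclusion $\subseteq$ is the core of the lemma. Suppose $w \in {}^J W$ and $w \leq_{J, \s} w_1$, so there exists $u \in W_J$ with $v := u w \s(u)^{-1} \leq w_1$. By the subword property of the Bruhat order applied to the reduced expression of $w_1$, we have $v = \rr_{j_1} \rr_{j_2} \cdots \rr_{j_k}$ for some indices $a \geq j_1 > j_2 > \cdots > j_k \geq 1$. The plan is to argue that $w = u^{-1} v \s(u)$ must equal some $w_i$, by exploiting two structural inputs: (i) the commutation relations from Lemma \ref{lem:disconn}, which disconnect the $\sigma$-orbits of $\{\rr_1, \ldots, \rr_{i-2}\}$ from $\{\rr_i, \ldots, \rr_a\}$, and (ii) the containment $\{\rr_1, \ldots, \rr_{a-1}\} \subseteq J$, which holds because $\rr_a$ is the only one of the $\rr_l$'s lying in $\BS - J$. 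Using (i) and (ii), one iteratively rearranges $v$ and absorbs any ``missing'' factor into the conjugating element, all while maintaining $w \in {}^J W$, and in the end $v$ (up to $W_J$-$\s$-conjugation) is forced into a suffix form $\rr_a \rr_{a-1} \cdots \rr_i$, yielding $w = w_i$.

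The main obstacle is precisely this converse inclusion: one must carefully track the interaction of the $\s$-twisted $W_J$-conjugation with the Bruhat subword structure, and use the minimality condition defining ${}^J W$ to exclude every non-suffix subword of $w_1$.
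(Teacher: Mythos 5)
The first half of your argument (that $w_i \in {}^J W$, and the inclusion $\{w_1,\dots,w_{i_{\max}}\} \subset \{w \in {}^J W : w \le_{J,\s} w_1\}$ via $u=1$) is essentially correct and matches the paper, apart from a cosmetic slip: the parabolic subgroup generated by $\rr_i,\dots,\rr_a$ need not be of type $A$ (the path $\mathscr L$ can carry a multiple bond), but the claim that $\rr_a$ is the unique left descent of $\rr_a\rr_{a-1}\cdots\rr_i$ still holds, since a product of distinct simple reflections along a path admits only commutation moves among its reduced words; also, a simple reflection outside $\supp(w_i)$ can never be a left descent, so the root computation is unnecessary.

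The problem is the converse inclusion, which you yourself label ``the core of the lemma'' and ``the main obstacle'': there you only state a plan (``iteratively rearrange $v$ and absorb any missing factor into the conjugating element''), not a proof, and as set up it does not go through. Starting from the bare definition of $\le_{J,\s}$ you only know that \emph{some} $u \in W_J$ satisfies $uw\s(u)\i \le w_1$, and nothing controls $u$; in particular Lemma \ref{lem:disconn} says nothing about the $\s$-orbit of $\rr_{i-1}$, so the commutation input (i) cannot by itself move $\s(u)$ past $w_i$, and ``maintaining $w \in {}^J W$ while rearranging'' is not an argument. The paper's proof supplies exactly the missing mechanism: by \cite[Proposition 3.8]{He-piece} one may choose $u \in W_J$ with the length condition $\ell(w\s(u)\i)=\ell(w)-\ell(u)$ in addition to $uw\s(u)\i \le w_1$. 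This refinement forces $w\s(u)\i \in {}^J W$ and hence $w\s(u)\i = w_i$ for some $i$ (your own gap/commutation observation identifies the elements of ${}^J W$ below $w_1$ as the suffixes); then $uw_i \le w_1$ with $\ell(uw_i)=\ell(u)+\ell(w_i)$ together with $\rr_{i-1}w_i \nleq w_1$ forces $u \le \rr_{i-2}\cdots\rr_1$, so that Lemma \ref{lem:disconn} applies to give $w = w_i\s(u) = \s(u)w_i$, and finally $w \in {}^J W$ (with $\s(\rr_j) \in J$ for $j \le i-2$) forces $\s(u)=1$. Without invoking this length-additive choice of $u$ (or proving an equivalent statement), your sketch of the reverse inclusion has a genuine gap precisely at the step you identified as decisive.
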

\begin{proof}
	Since $\mathscr L$ is connected and since $\rr_a \in \BS-J$, we have $w_i \in {}^J W$. By definition, $w_i \le w_1$ for any $i$. 
	
	On the other hand, let $w \in {}^J W$ with $w \le_{J, \s} w_1$. Then by \cite[Proposition 3.8]{He-piece}, there exists $u \in W_J$ with $\ell(w \s(u) \i)=\ell(w)-\ell(u)$ and $u w \s(u) \i \le w_1$. Then we have $w \s(u) \i \in {}^J W$ and $w \s(u) \i=w_i$ for some $1 \le i \le i_{\max}$. Then $u w_i \le w_1$. Since $u \in W_J$ and $w_i \in {}^J W$, we have $\ell(u w_i)=\ell(u)+\ell(w_i)$. Note that $\rr_{i-1} w_i \nleq w_1$, so we have $u \le \rr_{i-2} \rr_{i-3} \cdots \rr_1$. By Lemma \ref{lem:disconn}, the sets $\{\s(\rr_{i-2}), \s(\rr_{i-3}), \ldots, \s( \rr_1)\}$ and $\{ \rr_a, \rr_{a-1}, \ldots, \rr_i\}$ are disconnected from each other. Hence $w=w_i \s(u)=\s(u) w_i$. Since $w \in {}^J W$, we have $\s(u)=1$ and hence $w=w_i$. 
\end{proof}

By the above lemma, the fine Deligne--Lusztig variety $X_{J,w_i}$ is defined for each $1\leq i \leq i_{\max}$. 
\begin{corollary}\label{cor:closure}
	We have $$\overline{X_{J, w_1}}=\bigsqcup_{1 \leq i \leq i_{\max}} X_{J, w_i}.$$
\end{corollary}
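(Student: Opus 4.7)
The plan is to deduce the statement as an essentially formal consequence of two results already in hand: the general closure formula for fine Deligne--Lusztig varieties (Theorem~\ref{thm:closure}) and the explicit determination of the $\le_{J,\s}$-predecessors of $w_1$ inside ${}^JW$ (Lemma~\ref{lem:w_i}). There is no genuine obstacle; the substantive geometric input has been isolated in Theorem~\ref{thm:closure}, and the combinatorial input has been isolated in the second assertion of Lemma~\ref{lem:w_i}.

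Concretely, I will first observe that Lemma~\ref{lem:w_i} asserts $w_1 \in {}^JW$, so Theorem~\ref{thm:closure} applies to $w = w_1$ and gives
\[
\overline{X_{J,w_1}} \;=\; \bigsqcup_{\substack{w' \in {}^JW \\ w' \le_{J,\s} w_1}} X_{J,w'}.
\]
Then I will substitute the explicit description
\[
\{\, w' \in {}^JW : w' \le_{J,\s} w_1 \,\} \;=\; \{w_1, w_2, \ldots, w_{i_{\max}}\}
\]
from the second assertion of Lemma~\ref{lem:w_i}, reindexing the disjoint union over $1 \le i \le i_{\max}$. This immediately yields the claimed stratification.
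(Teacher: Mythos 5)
Your proposal is correct and is exactly the paper's argument: apply Theorem \ref{thm:closure} with $w = w_1$ (valid since Lemma \ref{lem:w_i} gives $w_1 \in {}^JW$) and substitute the identification $\{w' \in {}^JW : w' \le_{J,\s} w_1\} = \{w_1,\ldots,w_{i_{\max}}\}$ from that same lemma. Nothing further is needed.
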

\begin{proof}
	This follows from Theorem \ref{thm:closure} and Lemma \ref{lem:w_i}. 
\end{proof}

Given $g \in G^{\mathrm{reg}} \cap G(\BF_q)$, our goal in this section is to compute 
$$\tr(g, J ,\mathscr L) : = \tr (g 
\mid  \mathbf H^* (\overline {X_{J,w_1}})). $$  

\begin{corollary}\label{cor:decomp of trace}
	For $g \in G^{\mathrm{reg}} \cap G(\BF_q)$, we have
	$$\tr (g, J, \mathscr L ) = \sum _{i=1} ^{i_{\max}} \tr (g \mid \coh ^*_c (X_{J,w_i})). $$  
\end{corollary}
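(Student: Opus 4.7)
The plan is to deduce Corollary \ref{cor:decomp of trace} from Corollary \ref{cor:closure} by standard additivity of compactly supported $\ell$-adic cohomology traces along a stratification by locally closed subvarieties.

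First, observe that $\overline{X_{J,w_1}}$ is projective, since it is a closed subvariety of the projective variety $G/P_J$. In particular it is proper, so Poincar\'e duality (or directly the fact that the natural map $\coh^*_c \to \coh^*$ is an isomorphism for proper varieties) gives
\[
\tr(g,\mathbf{H}^*(\overline{X_{J,w_1}})) = \tr(g,\mathbf{H}^*_c(\overline{X_{J,w_1}})).
\]
Hence it suffices to show that $\tr(g,\mathbf{H}^*_c(\cdot))$ is additive across the stratification of $\overline{X_{J,w_1}}$ provided by Corollary \ref{cor:closure}.

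Next, I would verify that each stratum $X_{J,w_i}$ is stable under $g$, so that $g$ acts on $\mathbf{H}^*_c(X_{J,w_i})$ in the first place. This is immediate from the definition of $X_{J,w}$: since $g \in \mathbb{G}(\BF_q)$ commutes with the Frobenius $\sigma$, left multiplication by $g$ preserves the condition $h^{-1}\sigma(h) \in P_J \cdot_\sigma BwB$ for every $w \in {}^JW$. Thus the $g$-action on $G/P_J$ respects the partition of $\overline{X_{J,w_1}}$.

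The main step is then the additivity of the trace. Totally order the indices $i = 1, \dots, i_{\max}$ so that $U_i := \bigsqcup_{j \leq i} X_{J,w_j}$ is open in $\overline{X_{J,w_1}}$ for each $i$ (one can use the closure order $\leq_{J,\sigma}$, since by Theorem \ref{thm:closure} and Lemma \ref{lem:w_i}, $\overline{X_{J,w_1}}$ is the union of the $X_{J,w_i}$ and each $\overline{X_{J,w_i}}$ is a union of strata with smaller index). For each step we have a $g$-equivariant open-closed decomposition $U_i = U_{i-1} \sqcup X_{J,w_i}$, giving rise to the excision long exact sequence
\[
\cdots \to \mathbf{H}^j_c(U_{i-1}) \to \mathbf{H}^j_c(U_i) \to \mathbf{H}^j_c(X_{J,w_i}) \to \mathbf{H}^{j+1}_c(U_{i-1}) \to \cdots
\]
which is $g$-equivariant. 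Taking alternating traces across this long exact sequence yields
\[
\tr(g,\mathbf{H}^*_c(U_i)) = \tr(g,\mathbf{H}^*_c(U_{i-1})) + \tr(g,\mathbf{H}^*_c(X_{J,w_i})).
\]
Iterating from $i=1$ up to $i=i_{\max}$, and using $U_{i_{\max}} = \overline{X_{J,w_1}}$, gives the desired formula. Since each step uses only the excision sequence and finite-dimensionality of the $\ell$-adic cohomology groups, there is no real obstacle; the only point requiring care is the existence of a compatible filtration of $\overline{X_{J,w_1}}$ by open $g$-stable subsets with locally closed complements, which is guaranteed by the closure order from Theorem \ref{thm:closure}.
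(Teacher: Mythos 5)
Your argument is correct and is exactly the (standard) argument the paper leaves implicit: Corollary \ref{cor:decomp of trace} is deduced from the stratification of Corollary \ref{cor:closure} via properness of $\overline{X_{J,w_1}}$, $g$-stability of the strata, and additivity of alternating traces on $\coh^*_c$ along the excision sequences of a filtration by open unions of strata. The only nitpick is the parenthetical direction of the closure order: with the paper's numbering, $\overline{X_{J,w_i}}$ is a union of strata with \emph{larger} index (so $\bigsqcup_{j\le i}X_{J,w_j}$ is open as you want), but since you allow reordering and the final sum is symmetric in $i$, this is immaterial.
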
 
\begin{proof}
	This follows from Corollary \ref{cor:closure}.
\end{proof}
\subsection{Parabolic induction}\label{subsec:parabolic induction} We keep the setting of \S \ref{subsec:setting}. Fix $1 \leq i \leq i_{\max}$. Denote 
$$P_i : = P_{\Sigma_i ^{\flat} \sqcup \Sigma _i ^{\sharp}}, \quad L_i: = L_{\Sigma_i ^{\flat} \sqcup \Sigma _i ^{\sharp}}, \quad G_i^{\ad} : = (L_{\Sigma_i ^{\flat}})^{\ad}, \quad H_i ^{\ad} : =  (L_{\Sigma_i ^{\sharp}})^{\ad}. $$
Since $\Sigma_i ^{\flat}$ is disconnected from $\Sigma_i ^{\sharp}$ (see Lemma \ref{lem:about Sigma}), we have a canonical isomorphism 
$$ L_i ^{\ad} \cong G_i^{\ad} \times H_i^{\ad}. $$
Let $L_i \to L _i^{\natural}$ be the central isogeny with the smallest kernel such that $L_i^{\natural}$ is the direct product of the inverse images in $L_i^{\natural}$ of $G_i^{\ad}$ and $H_i^{\ad}$. We denote by $G_i$ (resp.~$H_i$) the inverse image of $G_i^{\ad}$ (resp.~$H_i^{\ad}$) in $L_i ^{\natural}$. Then $G_i^{\ad}$ (resp.~$H_i^{\ad}$) is indeed the adjoint group of $G_i$ (resp.~$H_i$), so the notation is compatible.

Thus we have $L_i ^{\natural} = G_i \times H_i.$ Moreover, since $\Sigma_i ^{\flat}, \Sigma_i ^{\sharp}$ are $\s$-stable, the groups $P_i, L_i, L_i^{\natural}, G_i, H_i$, as well as the central isogeny $L_i \to L_i ^{\natural}$ and the decomposition $L_i ^{\natural} = G_i \times H_i$, are all defined over $\BF_q$. When we would like to emphasize the reductive groups over $\BF_q$ underlying $P_i, L_i$, etc., we shall write $\mathbb P_i,\mathbb L_i$, etc. We let $\pi_i$ denote the projection $P_i \to L_i \to L_i ^{\natural} \to G_i$, and let $\pi'_i$ denote the projection $P_i \to L_i \to L_i ^{\natural} \to H_i$. 

Let $W_i : = W_{\Sigma _i^{\flat}}$. 
Then $W_i$ is identified with the Weyl group of $G_i$, inside which $w_i$ is a $\sigma$-twisted Coxeter element. Let $X_{w_i}^{\mathbb G _i}$ be the classical Deligne--Lusztig variety associated to the element $w_i\in W_i$ in the full flag variety of $G_i$. Then we have a natural action of $G_i(\BF_q)$ on $X_{w_i}^{\mathbb G_i}$. Define the action of the group $P_i(\BF_q)$ on $G(\BF_q) \times X_{w_i}^{\mathbb G_i}$ by $$p \cdot (g, x)=(g p\i, \pi_i(p) \cdot x).$$ Let $G(\BF_q) \times^{P_i(\BF_q)} X^{\mathbb G_i}_{w_i}$ be the quotient space. As a $k$-variety this  is just a finite disjoint union of isomorphic copies of $X^{\mathbb G_i}_{w_i}$. 

\begin{proposition}\label{prop:inductive}
	For each $1 \leq i \leq i_{\max}$, we have a $G(\BF_q)$-equivariant isomorphism \begin{align*}
	G(\BF_q) \times^{P_i(\BF_q)} X^{\mathbb G_i}_{w_i} & \isom  X_{J, w_i}\\  (g, g' (G_i \cap B)) & \mapsto g g' P_J.
	\end{align*} 
\end{proposition}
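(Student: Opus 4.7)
The plan is to construct the inverse map explicitly by applying the Lang--Steinberg theorem to the connected $\BF_q$-group $P_i$. Two geometric observations drive the argument. First, since $w_i \in W_i = W_{\Sigma_i^\flat}$, we have $B w_i B \subset P_{\Sigma_i^\flat} \subset P_i$, and the Bruhat intersection with the Levi satisfies $B w_i B \cap L_i = (B \cap L_i)\, w_i\, (B \cap L_i)$. Second, the unipotent radical $U_i$ of $P_i$ is contained in $B$, hence in $P_J$; moreover, since $\Sigma_i^\sharp \subset J$, both the Levi subgroup $L_{\Sigma_i^\sharp}$ and the center $Z(L_i)$ lie in $L_J \subset P_J$.

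First I would verify that the forward map is well-defined, $G(\BF_q)$-equivariant, and takes values in $X_{J,w_i}$. The image lies in $X_{J,w_i}$ because for $g \in G(\BF_q)$ and $g'(G_i \cap B) \in X_{w_i}^{\mathbb G_i}$,
$$(gg')^{-1}\sigma(gg') = (g')^{-1}\sigma(g') \in (G_i \cap B)\, w_i\, (G_i \cap B) \subset B w_i B \subset P_J \cdot_\sigma B w_i B.$$
Well-definedness modulo the $P_i(\BF_q)$-action reduces, via the Levi decomposition $p = lu$ with $u \in U_i \subset P_J$, to the identity $l P_J = \pi_i(l)\, P_J$ (where elements of $G_i$ are interpreted through lifts to $L_i$); this holds because the kernel of the composition $L_i \to L_i^\natural \to G_i$ is contained in $L_{\Sigma_i^\sharp} \cdot Z(L_i) \subset P_J$.

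The heart of the argument is the construction of the inverse. Given $xP_J \in X_{J,w_i}$, by definition there exist $p_1 \in P_J$ and $y \in B w_i B$ with $x^{-1}\sigma(x) = p_1 y \sigma(p_1)^{-1}$; setting $x_1 := x p_1$ yields $x_1^{-1}\sigma(x_1) = y \in P_i$. Lang--Steinberg applied to the connected $\BF_q$-group $P_i$ produces $q \in P_i$ with $q^{-1}\sigma(q) = y$, so $g := x_1 q^{-1} \in G(\BF_q)$ and $xP_J = g q P_J$. Writing $q = l u$ with $l \in L_i$ and $u \in U_i \subset P_J$, we obtain $xP_J = g l P_J$. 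Let $g' \in G_i$ be the image of $l$. Projecting the identity $y = q^{-1}\sigma(q) = u^{-1} l^{-1}\sigma(l)\sigma(u)$ onto $L_i$ modulo $U_i$, and invoking $B w_i B \cap L_i = (B \cap L_i)\, w_i\, (B \cap L_i)$, forces $l^{-1}\sigma(l) \in (B \cap L_i)\, w_i\, (B \cap L_i)$; this descends through $L_i \to G_i$ to $(g')^{-1}\sigma(g') \in (G_i \cap B)\, w_i\, (G_i \cap B)$. Hence $g'(G_i \cap B) \in X_{w_i}^{\mathbb G_i}$, and $(g, g'(G_i \cap B))$ is a preimage of $xP_J$, establishing surjectivity. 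Injectivity modulo the $P_i(\BF_q)$-action is a direct check from these explicit formulas.

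The main technical obstacle I anticipate is the bookkeeping around the central isogeny $L_i \to L_i^\natural = G_i \times H_i$: throughout the argument elements of $G_i$ must be interpreted as acting on $G/P_J$ via chosen lifts to $L_i$, and one must repeatedly confirm that the ambiguity of the lift is harmless because the kernel of $L_i \to G_i$ is contained in $P_J$ and therefore acts trivially on $P_J$-cosets. Once this bookkeeping is cleanly in place, the Lang--Steinberg reduction above delivers the geometric content in a transparent way.
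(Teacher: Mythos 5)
Your forward map, the equivariance, and the surjectivity argument are sound, and in fact your Lang--Steinberg construction in the connected group $P_i$ is essentially the same device the paper uses for the second half of its proof (the identification of $X_{\Sigma^\sharp_i,w_i}$ with $G(\BF_q)\times^{P_i(\BF_q)}X'$). The genuine gap is the step you dismiss as ``a direct check'': injectivity. Unwinding it, if $g_1\tilde g_1'P_J=g_2\tilde g_2'P_J$ with $g_j\in G(\BF_q)$ and lifts $\tilde g_j'\in L_i$ of points of $X^{\mathbb G_i}_{w_i}$, all you obtain is an element $p:=(\tilde g_2')\i (g_2\i g_1)\tilde g_1'\in P_J$ together with the relation $a_1=p\i a_2\s(p)$, where $a_j=(\tilde g_j')\i\s(\tilde g_j')$ lies in $(B\cap L_i)\,w_i\,(B\cap L_i)\cdot\ker(\pi_i|_{L_i})$. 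To conclude you must show $p\in P_i$; but $P_J\not\subset P_i$ (for $2\le i\le a$ the root subgroups attached to the orbit $\Sigma_i\subset J$ lie in $P_J$ and not in $P_i$, and for $i=1$ one has $P_i=G$, so injectivity is exactly the nontrivial claim that $G/B\to G/P_J$ is injective on $X_{w_1}$). Nothing formal rules out a $\s$-conjugating element $p\in BuB$ with $1\neq u\in W_J$, $u\notin W_{\Sigma_i^\flat\sqcup\Sigma_i^\sharp}$; excluding this is precisely the content of the isomorphism $X_{\Sigma^\sharp_i,w_i}\isom X_{J,w_i}$ from \cite[4.2(d)]{Lu07}, and it is exactly where the paper spends its effort: most of the paper's proof is the verification that $\Sigma^\sharp_i$ is the \emph{maximal} subset of $J$ stable under $\Ad(w_i)\circ\s$ (a case analysis resting on Lemma \ref{lem:disconn} and the unbranched structure), which is the hypothesis under which Lusztig's theorem applies. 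Neither that maximality statement nor any substitute Bruhat-cell/Weyl-group argument appears in your proposal, so the crux of the proposition is left unproved; your explicit formulas give surjectivity, not injectivity.

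A secondary, fixable point: in checking that the image lands in $X_{J,w_i}$ you compute $(gg')\i\s(gg')=(g')\i\s(g')\in Bw_iB$ as if $g'$ were an element of $G$. With an honest lift $\tilde g'\in L_i$ one only gets $(\tilde g')\i\s(\tilde g')\in (B\cap L_i)\,w_i\,(B\cap L_i)\cdot K_i$ with $K_i=\ker(\pi_i|_{L_i})\subset L_{\Sigma^\sharp_i}$, and absorbing the $K_i$-factor into the $P_J\cdot_\s$-twist requires the $\Ad(w_i)\circ\s$-stability of $\Sigma^\sharp_i$ (from Lemma \ref{lem:about Sigma} and the disconnectedness of $\Sigma_i^\flat$ and $\Sigma_i^\sharp$) plus a Lang-type argument inside $L_{\Sigma^\sharp_i}$, with some care since $K_i$ need not be connected. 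This is routine, but it is not the computation you wrote, and it is another place where the structure of the $\s$-unbranched datum, rather than a formal manipulation, carries the argument.
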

\begin{proof} We fix $1 \leq i \leq i_{\max}$. We claim that $\Sigma ^ \sharp_i$ is the maximal subset of $J$ that is stable under $\Ad(w_i) \circ \s$. In fact, by definition $\Sigma ^{\sharp}_i$ is a $\s$-stable subset of $J$ (see Lemma \ref{lem:about Sigma}). Since $\Sigma _i ^{\sharp}$ is disconnected from $\Sigma _i ^{\flat}$ by Lemma \ref{lem:about Sigma}, $\Sigma _i ^{\sharp}$ is also stable under $\Ad (w_i)$. Now let $K$ be an arbitrary $\Ad (w_i)\circ \s $-stable subset of $J$. We need to show that $K\subset \Sigma_i^\sharp$. We first show that $K \cap \Sigma_i = \emptyset$. If $i=1$, then $\Sigma _i = \emptyset$ by definition. If $i=i_{\max}$, then $\Sigma_i$ is the $\s$-orbit of $\rr_a$, and $K$ is $\s$-stable (as $w_{i_{\max}} =1$ by convention). In this case, since $\rr_a \notin J$, we must have $K \cap \Sigma
	_i = \emptyset$. Now let $2 \leq i \leq a$. Then $\Ad (w_i) \rr_{i-1}\notin \BS$, and for any $\rr\in \Sigma_i-\set{\rr_{i-1}}$, we have either $\Ad(w_i) \rr \notin \BS$, or $\Ad(w_i) \rr = \rr \notin J$. Hence for all $\rr \in \Sigma_i$ we have $\Ad(w_i) \rr \notin J$, and so $K \cap \Sigma_i = \emptyset$. Thus we have shown that $K\cap \Sigma_i = \emptyset$ in all cases.
	
	Similarly, for any integer $j$ with $i \leq j \leq a$, the following holds. On one hand either $\Ad (w_i) \rr_j = \rr_{j-1}$ or $\Ad (w_i) \rr_j \notin \BS$, and on the other hand, for any $\rr \neq \rr_j$ that is in the $\s$-orbit of $\rr_j$, either $\Ad (w_i) \rr \notin \BS$ or $\Ad (w_i) \rr = \rr \notin J$. Moreover, we have $\Ad (w_i) \rr_i\notin \BS$ if $i<a$, and we have $\Ad(w_a) \rr_a=\rr_a \notin J$. Using this and by induction on $j$, we see that $K$ does not contain any element in the $\s$-orbit of $\rr_j$, for any $j \geq i$. Therefore $K \cap \Sigma _i ^{\flat}  =\emptyset$. We already saw $K \cap \Sigma _i = \emptyset$, so $K \subset \Sigma _i ^{\sharp}.$ This proves our claim that $\Sigma ^ \sharp_i$ is the maximal subset of $J$ that is stable under $\Ad(w_i) \circ \s$.

	By the above claim and by \cite[4.2(d)]{Lu07} (see also \cite[\S 3]{He-par}), the projection map $G/P_{\Sigma _i ^{\sharp}}\to G/P_J$ induces an isomorphism $$X_{\Sigma^\sharp_i, w_i} \isom  X_{J, w_i}.$$ 
	
	Note that $P_{\Sigma^\sharp_i} \cdot_\sigma B w_i B \subset P_i$. Thus $g P_{\Sigma^\sharp_i} \in X_{\Sigma^\sharp_i, w_i}$ implies that $g \i \s(g) \in P_i$. By Lang's theorem, $g \i \s(g) \in P_i$ is equivalent to $g \in G(\BF_q) P_i$. The projection map $G/P_{\Sigma^\sharp_i} \to G/P_i$ induces an isomorphism $$X_{\Sigma^\sharp_i, w_i} \isom  G(\BF_q) \times^{P_i(\BF_q)} X',$$ where $X'$ is the sub-variety of $P_i/P_{\Sigma^\sharp_i}$ given by $$X'=\{p P_{\Sigma^\sharp_i}\in P_i/P_{\Sigma^\sharp_i}; ~ p \i \s(p) \in P_{\Sigma^\sharp_i} \cdot_{\s} B w_i B\} . $$ 
	
	Recall that $\pi_i$ denotes the projection $P_i \to L_i \to L_i ^{\natural} \to G_i$.
	Note that $$P_i/P_{\Sigma^\sharp_i}  \cong L_i/(L_i \cap P_{\Sigma^\sharp_i}) \cong L_i^{\natural}/( \pi _i (B)\times H_i)  \cong G_i / \pi_i (B),$$ where $G_i/\pi _i (B)$ is the full flag variety of $G_i$. Under this isomorphism, the sub-variety $X'$ of $P_i/P_{\Sigma^\sharp_i}$ is identified to $X^{\mathbb G_i}_{w_i}$. The proposition is proved. 
\end{proof}

\begin{corollary}\label{cor:indcution of trace}For each $1\leq i\leq i_{\max}$, we have an isomorphism of virtual $G(\BF_q)$-representations $$\coh^*_c(X_{J, w_i}) \cong \Ind^{G(\BF_q)}_{P_i(\BF_q)} \coh ^*_c(X^{\mathbb G_i}_{w_i}),$$ where $P_i(\BF_q)$ acts on $X^{\mathbb G_i}_{w_i}$ via the projection $\pi_i : P_i(\BF_q) \to G_i (\BF_q)$.
\end{corollary}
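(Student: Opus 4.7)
The plan is to deduce this directly from the geometric isomorphism $X_{J,w_i} \cong G(\BF_q) \times^{P_i(\BF_q)} X^{\mathbb G_i}_{w_i}$ supplied by Proposition \ref{prop:inductive}, by taking $\ell$-adic cohomology with compact support and unwinding the definition of induction.

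First I would fix a set of coset representatives $\set{r_1, \ldots, r_m}$ for $G(\BF_q)/P_i(\BF_q)$, and observe that the quotient variety $G(\BF_q) \times^{P_i(\BF_q)} X^{\mathbb G_i}_{w_i}$ is, as a $k$-variety, the finite disjoint union $\bigsqcup_{j=1}^{m} r_j \cdot X^{\mathbb G_i}_{w_i}$ of $m$ isomorphic copies of $X^{\mathbb G_i}_{w_i}$, indexed by the cosets. Using that $\coh^*_c$ commutes with finite disjoint unions, we obtain a canonical decomposition
\begin{equation*}
\coh^*_c(X_{J,w_i}) \;\cong\; \bigoplus_{j=1}^{m} \coh^*_c(X^{\mathbb G_i}_{w_i}),
\end{equation*}
where the $j$-th summand corresponds to the stratum indexed by $r_j$.

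Next I would trace through the $G(\BF_q)$-action. For $g \in G(\BF_q)$ and a coset representative $r_j$, write $g r_j = r_{j'} p$ with $p \in P_i(\BF_q)$. Under the formula $(g, g'(G_i \cap B)) \mapsto g g' P_J$ of Proposition \ref{prop:inductive}, left multiplication by $g$ on $X_{J,w_i}$ carries the $j$-th stratum isomorphically onto the $j'$-th stratum, and the induced map on the fiber $X^{\mathbb G_i}_{w_i}$ is given by the action of $\pi_i(p) \in G_i(\BF_q)$. This is exactly the recipe for the $G(\BF_q)$-action on the induced representation $\Ind^{G(\BF_q)}_{P_i(\BF_q)} \coh^*_c(X^{\mathbb G_i}_{w_i})$, where $P_i(\BF_q)$ acts on the inducing representation through the projection $\pi_i$.

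No serious obstacle is expected; the only mild subtlety is to check that the $G_i(\BF_q)$-action on $\coh^*_c(X^{\mathbb G_i}_{w_i})$ induced from the geometric action on $X^{\mathbb G_i}_{w_i}$ matches the one obtained by restricting the ambient $G(\BF_q)$-action via the projection $\pi_i$, which is immediate from the explicit formula in Proposition \ref{prop:inductive}. Assembling these observations yields the claimed isomorphism of virtual $G(\BF_q)$-representations.
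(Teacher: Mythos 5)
Your proposal is correct and follows the same route as the paper, whose proof consists of the single sentence that the corollary follows immediately from Proposition \ref{prop:inductive}; you have merely spelled out the routine unwinding (decomposing the quotient $G(\BF_q) \times^{P_i(\BF_q)} X^{\mathbb G_i}_{w_i}$ into copies of $X^{\mathbb G_i}_{w_i}$ indexed by cosets and matching the $G(\BF_q)$-action with the induced-representation structure), which is exactly what the paper leaves implicit.
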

\begin{proof}
	This follows immediately from Proposition \ref{prop:inductive}.
\end{proof}
\subsection{Review of regular elements} We recall the definition of regular elements and some standard facts. Let $G$ be a reductive group over $k$. \begin{definition}
	An element $g\in G$ is called \textit{regular}, if the centralizer $G_g$ of $g$ in $G$ has dimension equal to the rank of $G$. The set of regular elements is denoted by $G ^{\mathrm
		{reg}}$.
\end{definition}
If $G$ is semi-simple, the above definition is the same as \cite{Steinberg-reg}. In general, one easily checks that $g\in G$ is regular in the above sense if and only if the image of $g$ in $G^{\ad}$ is regular. Thus we can easily transport the results from \cite{Steinberg-reg}, which only discusses semi-simple groups, to reductive groups. 
\begin{theorem}\label{thm:steiberg crit}
	An element $g\in G$ is regular if and only if there are only finitely many Borel subgroups of $G$ that contain $g$.  
\end{theorem}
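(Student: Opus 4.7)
The plan is to reduce the statement to the semisimple (indeed adjoint) case, where the theorem is the classical result of Steinberg \cite{Steinberg-reg}. The reduction relies on two straightforward facts about the adjoint quotient map $\pi : G \to G^{\ad}$, combined with the observation (already made in the paragraph preceding the theorem) that an element $g \in G$ is regular if and only if $\pi(g)$ is regular in $G^{\ad}$.

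First I would record that $\pi$ induces a bijection between the set of Borel subgroups of $G$ and the set of Borel subgroups of $G^{\ad}$. Indeed, every Borel of $G$ contains the (connected) center $Z^{\circ}(G)$, so $B \mapsto \pi(B)$ is well-defined on Borels, and its inverse is $B' \mapsto \pi^{-1}(B')^{\circ}$. Next I would check that this bijection preserves the incidence relation with $g$: the Borel $B \subset G$ contains $g$ if and only if the Borel $\pi(B) \subset G^{\ad}$ contains $\pi(g)$. The ``only if'' direction is obvious; for the converse, if $\pi(g) \in \pi(B)$, then $g \in \pi^{-1}(\pi(B)) = B \cdot \ker \pi$, and since $\ker \pi \subset Z(G) \subset B$ this gives $g \in B$.

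Combining these two points, the set of Borels of $G$ containing $g$ is in bijection with the set of Borels of $G^{\ad}$ containing $\pi(g)$. Hence the finiteness condition on Borels containing $g$ in $G$ is equivalent to the corresponding condition on Borels containing $\pi(g)$ in $G^{\ad}$. Since $g$ is regular in $G$ iff $\pi(g)$ is regular in $G^{\ad}$, the theorem for $G$ follows from its known analogue for the semisimple (in fact adjoint) group $G^{\ad}$, which is exactly \cite[Theorem 1.3 and Corollary 1.5]{Steinberg-reg} or the classical statement invoked in the excerpt.

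The only nontrivial point in the plan is the descent of Borels under the adjoint quotient and the compatibility of membership $g \in B$ under $\pi$; both are essentially formal once one remembers that $\ker \pi = Z(G)$ sits inside every Borel. There is no real obstacle beyond citing Steinberg's theorem correctly and being careful that the kernel of $\pi$ is contained in every Borel.
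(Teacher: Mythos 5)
Your proof is correct and follows essentially the same route as the paper, which simply cites \cite[Theorem 1.1]{Steinberg-reg} applied to $G^{\ad}$, relying on the observation (made just before the theorem) that $g$ is regular in $G$ if and only if its image in $G^{\ad}$ is regular. You merely spell out the formal transport step — the bijection of Borels under $G \to G^{\ad}$ and the compatibility of membership, both resting on $\ker(G\to G^{\ad}) = Z(G)$ lying in every Borel — which the paper leaves implicit.
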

\begin{proof} This follows from \cite[Theorem 1.1]{Steinberg-reg} applied to $G^{\ad}$.  
\end{proof}

\begin{proposition}\label{prop:implication}
	Assume $G'$ is a reductive group over $k$ that contains $G$ as a closed subgroup. Then ${G'} ^{\mathrm{reg}} \cap G \subset G ^{\mathrm{reg}}$.
\end{proposition}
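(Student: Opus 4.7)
The plan is to reduce the statement to Steinberg's criterion (Theorem \ref{thm:steiberg crit}), which characterizes regularity in terms of the finiteness of the set of Borel subgroups containing $g$. Fix $g \in {G'}^{\mathrm{reg}} \cap G$. Let $\mathcal{B}^g$ (resp.\ $\mathcal{B}'^g$) denote the set of Borel subgroups of $G$ (resp.\ $G'$) containing $g$. By Theorem \ref{thm:steiberg crit} applied to $G'$, the set $\mathcal{B}'^g$ is finite, and it suffices to show that $\mathcal{B}^g$ is finite too.

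The core idea is to construct an injection $\Phi : \mathcal{B}^g \to \mathcal{B}'^g$. Given any $B \in \mathcal{B}^g$, the subgroup $B$ is closed, connected and solvable in $G'$, so by standard structure theory it is contained in at least one Borel of $G'$. Any such Borel automatically contains $g$. Making an arbitrary choice for each $B$, we get a map $\Phi : \mathcal{B}^g \to \mathcal{B}'^g$.

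The key step is to show $\Phi$ is injective, and I expect this to be the only nontrivial point. Suppose $B_1, B_2 \in \mathcal{B}^g$ satisfy $\Phi(B_1) = \Phi(B_2) = B'$. Then $B_i \subset B' \cap G$, so $B_i \subset (B' \cap G)^0$. Since $B'$ is solvable, $(B' \cap G)^0$ is a closed connected solvable subgroup of $G$; but each $B_i$ is, by definition, a maximal such subgroup of $G$. Hence $B_1 = (B' \cap G)^0 = B_2$. Thus $\Phi$ is injective, which forces $|\mathcal{B}^g| \leq |\mathcal{B}'^g| < \infty$, and another application of Theorem \ref{thm:steiberg crit} gives $g \in G^{\mathrm{reg}}$.

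The only subtle point in the argument is that the extension $B \subset B'$ is not canonical, so the map $\Phi$ depends on choices; the observation that saves the day is the formula $B = (B' \cap G)^0$ whenever $B \subset B'$, which makes $\Phi$ injective regardless of the choices made. No comparison of ranks between $G$ and $G'$ is needed, which is fortunate since $\mathrm{rank}(G) < \mathrm{rank}(G')$ in general.
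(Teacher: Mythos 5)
Your proof is correct and follows essentially the same route as the paper: both reduce the statement to Steinberg's criterion (Theorem \ref{thm:steiberg crit}) and both hinge on the observation that for a Borel $B'\subset G'$ containing a Borel $B$ of $G$ one has $B=(B'\cap G)^0$, since the latter is a closed connected solvable subgroup of $G$. The only cosmetic difference is that the paper packages this as the finite-to-one map of flag varieties $G/B\to G'/B'$, whereas you use it to make your choice-dependent map $\mathcal B^g\to\mathcal B'^g$ injective; both yield the required finiteness.
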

\begin{proof}
	Fix a Borel subgroup $B' \subset G'$ that contains $B$. By Theorem \ref{thm:steiberg crit}, it suffices to show that the natural map between flag varieties $G/B \to G'/ B'$ is finite-to-one (at the level of $k$-points). For this, it suffices to show that $B$ is of finite index in $B' \cap G$. Note that the identity component $(B'\cap G)^0$ of $B'\cap G$ is a connected solvable closed subgroup of $G$ which contains $B$. Hence $(B'\cap G)^0 = B$. But we know that $(B'\cap G)^0 $ has finite index in $B'\cap G$ because the latter is a linear algebraic group over $k$. 
\end{proof}
\begin{proposition}\label{prop:projection of reg}
	Let $P=P_J$ be a standard parabolic subgroup of $G$, with standard Levi subgroup $L= L_J$. The projection $P \to L$ maps $P\cap G^{\mathrm{reg}}$ into $L^{\mathrm{reg}}$.
\end{proposition}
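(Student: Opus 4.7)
The plan is to apply Steinberg's criterion (Theorem \ref{thm:steiberg crit}) to both $G$ and $L$, and relate Borel subgroups of $L$ to Borel subgroups of $G$ contained in $P$.

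First, I would recall that Borel subgroups of $L$ are in bijection with Borel subgroups of $G$ contained in $P$. Explicitly, if $U$ denotes the unipotent radical of $P$, so that $P = L \ltimes U$, then the map $B_L \mapsto B_L \cdot U$ gives a bijection between the set of Borel subgroups of $L$ and the set of Borel subgroups of $G$ that are contained in $P$; the inverse sends a Borel $B' \subset P$ of $G$ to its image in $L = P/U$ (equivalently, $B' \cap L$).

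Next, let $g \in P \cap G^{\mathrm{reg}}$ and write $\ell \in L$ for its image under the projection $\pi \colon P \to L$. Using the Levi decomposition $g = \ell u$ with $u \in U$ (uniquely determined), I would observe that for any Borel $B_L \subset L$ one has the equivalence
\[
\ell \in B_L \iff g \in B_L \cdot U,
\]
since any decomposition $g = b u'$ with $b \in B_L$ and $u' \in U$ must agree with the Levi decomposition by uniqueness. Combining this with the bijection in the previous step, the Borel subgroups of $L$ containing $\ell$ correspond exactly to the Borel subgroups of $G$ containing $g$ that happen to be contained in $P$.

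Finally, since $g$ is regular in $G$, Theorem \ref{thm:steiberg crit} tells us there are only finitely many Borel subgroups of $G$ containing $g$; a fortiori, only finitely many of them are contained in $P$. By the correspondence above, only finitely many Borel subgroups of $L$ contain $\ell$, so Theorem \ref{thm:steiberg crit} applied to $L$ gives $\ell \in L^{\mathrm{reg}}$, as desired. There is no substantive obstacle here; the only point requiring care is the uniqueness of the Levi decomposition inside $P$, which ensures the bijective correspondence of Borels containing the respective elements.
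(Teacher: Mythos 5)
Your proof is correct and follows the same route as the paper: both arguments combine Theorem \ref{thm:steiberg crit} with the bijection between Borel subgroups of $L$ and Borel subgroups of $G$ contained in $P$. The only difference is that you spell out the containment-compatibility of this bijection via the uniqueness of the decomposition $P = L \ltimes U$, which the paper leaves implicit.
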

\begin{proof}
	The projection $P\to L$ induces a bijection from the set of Borel subgroups of $G$ contained in $P$ to the set of Borel subgroups of $L$. Thus the proposition follows from Theorem \ref{thm:steiberg crit}.
\end{proof}

The following proposition is well known and elementary to verify.
\begin{proposition}\label{prop:GL reg}
	Let $V$ be a finite dimensional $k$-vector space. An element $g \in \GL(V)$ is regular if and only if each eigenspace of $g$ is one dimensional. \qed 
\end{proposition}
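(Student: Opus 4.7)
The plan is to deduce the equivalence from the Steinberg regularity criterion recorded in Theorem \ref{thm:steiberg crit}. Under the standard identification of Borel subgroups of $\GL(V)$ with complete flags in $V$, the element $g$ is regular if and only if only finitely many complete flags in $V$ are $g$-stable, so I will analyze when this finiteness holds.

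For the ``if'' direction, I would assume every eigenspace $\ker(g - \lambda)$ of $g$ is one-dimensional. Then each generalized eigenspace of $g$ carries a single Jordan block, which means $V$ is cyclic as a module over $k[g]$. Consequently the $g$-stable subspaces of $V$ correspond bijectively to the monic divisors of the characteristic polynomial of $g$, which form a finite set. In particular only finitely many complete flags in $V$ are $g$-stable, so $g$ is regular.

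For the ``only if'' direction, I would suppose that some eigenspace $V_\lambda := \ker(g - \lambda\cdot\id)$ has dimension at least $2$. Since $g$ acts on $V_\lambda$ by the scalar $\lambda$, every line in $V_\lambda$ is $g$-stable, and $k$ is infinite so there are infinitely many such lines. Each such line $L$ extends to a $g$-stable complete flag of $V$: the induced action of $g$ on the quotient $V/L$ is still triangularizable over the algebraically closed field $k$, hence admits a $g$-stable complete flag, which one pulls back to $V$. This produces infinitely many $g$-stable complete flags, contradicting regularity via Theorem \ref{thm:steiberg crit}.

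There is no serious obstacle here, consistent with the authors' remark that the statement is well known and elementary; the only small point requiring attention is the extension of a $g$-stable line to a $g$-stable complete flag in the ``only if'' direction, which is routine since $g$ is triangularizable over the algebraically closed field $k$. An alternative route, if one prefers to avoid the Steinberg criterion, is a direct computation of $\dim Z_{\End(V)}(g)$ from the Jordan form: the centralizer of a single Jordan block of size $\mu$ has dimension $\mu$, whereas any configuration of $k\ge 2$ Jordan blocks of sizes $\mu_1\ge\cdots\ge\mu_k$ on a single generalized eigenspace contributes $\sum_i(2i-1)\mu_i > \sum_i \mu_i$, so $\dim \GL(V)_g=\dim V$ precisely when each eigenspace is a line.
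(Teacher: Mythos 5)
Your proof is correct. The paper itself gives no argument for this proposition --- it is recorded as ``well known and elementary to verify'' with the proof box attached to the statement --- so there is nothing to compare step by step; your write-up simply supplies what the authors omit. Your main route, through Theorem \ref{thm:steiberg crit}, is sound and sits well with the surrounding text, since the paper uses exactly that criterion to prove the neighboring Propositions \ref{prop:implication} and \ref{prop:projection of reg}: Borel subgroups of $\GL(V)$ containing $g$ are the $g$-stable complete flags, the cyclicity of $V$ over $k[g]$ gives only finitely many $g$-stable subspaces when all eigenspaces are lines, and a $\geq 2$-dimensional eigenspace yields infinitely many $g$-stable lines (as $k$ is algebraically closed, hence infinite), each of which extends to a $g$-stable complete flag. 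Your alternative argument is probably closer to the ``elementary verification'' the authors had in mind: regularity in $\GL(V)$ means $\dim \GL(V)_g = \dim V$, the centralizer of a matrix with Jordan block sizes $\mu_1 \geq \cdots \geq \mu_k$ in a given generalized eigenspace has dimension $\sum_i (2i-1)\mu_i$, and this exceeds $\sum_i \mu_i$ as soon as $k \geq 2$; one should only add the (immediate) remark that $\GL(V)_g$ is open and dense in the linear centralizer $Z_{\End(V)}(g)$, so the two have the same dimension. Either way the statement is established, and no gap remains.
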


\subsection{The character formula on a classical Deligne--Lusztig variety}
Let $g \in G(\BF_q)$ and let $g=su$ be the Jordan decomposition of $g$. Assume $g$ is regular in $G$. Let $w \in W$. Let $(T_w,B_w)$ be the pair associated to $w$ as in \cite[Lemma 1.13]{DL}. Namely, $T_w$ is a $\sigma$-stable maximal torus of $G$, and $B_w$ is a Borel subgroup of $G$ containing $T_w$ such that $B_w$ and $\sigma (B_w)$ have relative position $w$. The pair $(T_w,B_w)$ is well defined up to $G(\BF_q)$-conjugation, but we fix a representative. 
We denote by ${}^G s$ the conjugacy class in $G(k)$ of $s$, and denote by ${}^{G(\BF_q)} s$ the conjugacy class in $G(\BF_q)$ of $s$.
\begin{proposition}\label{prop:rough trace}
	In the above setting, we have 
	\begin{align}\label{eq:rough trace}
	\tr(g \mid  \coh _c^*(X_w)) =  \frac{|G_s (\BF_q)| }{| G_s^0 (\BF_q)| } \cdot \big | T_w \cap {}^{G(\BF_q)} s \big |.
	\end{align}
\end{proposition}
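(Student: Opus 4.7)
The plan is to invoke the Deligne--Lusztig character formula \cite[Theorem 4.2]{DL}, which expresses
\[
\tr(g, \coh_c^*(X_w)) = \frac{1}{|G_s^0(\BF_q)|} \sum_{\substack{x \in G(\BF_q) \\ x^{-1}sx \in T_w(\BF_q)}} Q^{G_s^0}_{xT_wx^{-1}}(u),
\]
where $Q^{G_s^0}_{T'}(u)$ denotes the Green function of the connected reductive group $G_s^0$ attached to the $F$-stable maximal torus $T' \subset G_s^0$, evaluated at the unipotent element $u$. Note that for each $x$ in the indexing set, $s \in xT_wx^{-1}$, so $xT_wx^{-1}$ is indeed a maximal torus of $G_s^0$, and the Green function on the right makes sense.

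To exploit the regularity hypothesis, I would first observe that $u$ is a regular unipotent element of $G_s^0$: from $G_g = (G_s)_u$ together with $\dim G_g = \mathrm{rank}(G) = \mathrm{rank}(G_s^0)$ we get $\dim(G_s^0)_u = \mathrm{rank}(G_s^0)$, so $u$ is regular in $G_s^0$, and since it is unipotent it is regular unipotent. Given this, the crucial reduction is to show that every Green function value $Q^{G_s^0}_{xT_wx^{-1}}(u)$ appearing in the sum equals $1$, which collapses the expression to counting $\{x \in G(\BF_q) : x^{-1}sx \in T_w(\BF_q)\}$.

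To carry out that count, I would partition the set by the common value $s' := x^{-1}sx \in T_w(\BF_q)$. Each such $s'$ belongs to $T_w \cap {}^{G(\BF_q)}s$ (and any element of $T_w$ that is $G(\BF_q)$-conjugate to the $\sigma$-fixed element $s$ is automatically $\sigma$-fixed, so this intersection coincides with $T_w(\BF_q) \cap {}^{G(\BF_q)}s$). The fiber over a given $s'$ is a right coset of $G_s(\BF_q)$ in $G(\BF_q)$, contributing $|G_s(\BF_q)|$ elements. This yields a total of $|G_s(\BF_q)| \cdot |T_w \cap {}^{G(\BF_q)}s|$, and dividing by $|G_s^0(\BF_q)|$ produces the desired formula.

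The hard part will be verifying the Green function identity $Q^{G_s^0}_{T'}(u) = 1$ at regular unipotent $u$ for every $T'$ of the form $xT_wx^{-1}$ arising in the sum. This is a delicate input that must be justified via the Green function theory for $G_s^0$ --- presumably by reducing to a direct fixed-point computation on the classical Deligne--Lusztig variety of $G_s^0$ attached to $T'$ --- and is the main non-trivial step in the proof.
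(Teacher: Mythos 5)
Your proposal follows essentially the same route as the paper: the Deligne--Lusztig character formula \cite[Theorem 4.2]{DL}, the observation that regularity of $g$ in $G$ forces $u$ to be regular unipotent in $G_s^0$, the collapse of every Green function value in the sum to $1$, and the fiber/orbit--stabilizer count yielding $|G_s(\BF_q)|\cdot\big|T_w\cap {}^{G(\BF_q)}s\big|$. The one step you flag as the remaining ``hard part''---that $Q^{G_s^0}_{T'}(u)=1$ at a regular unipotent element---is precisely \cite[Theorem 9.16]{DL}, which the paper simply cites, so no further argument is needed there.
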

\begin{proof}
	By \cite[Theorem 4.2]{DL}, we have $$
	\tr(g \mid \coh _c^*(X_w))=\frac{1}{|G_s^0 (\BF_q)|} \sum_{g' \in G(\BF_q); g' T_w(g') \i \subset G_s^0} Q_{g' T_w (g') \i, G_s^0}(u),$$
	where $Q_{g' T_w (g') \i, G_s^0}$ is the Green function. Since $g $ is regular in $G$, we know that $u$ is regular in $G_s^0$. Hence by \cite[Theorem 9.16]{DL}, we have $Q_{g' T_w (g') \i, G_s^0}(u)=1$ for every $g'$ that appears in the above summation. Therefore we have
	$$\tr(g \mid  \coh ^*_c (X_w))=   \frac{1}{| G_s^0 (\BF_q)| }\# \{ g ' \in G(\BF_q) ; g'T_w (g') \i \subset G_s^0 \} . $$
	Now for $g' \in G(\BF_q)$, the condition $g' T_w (g')\i \subset G_s ^0$ is equivalent to the condition $s \in g' T_w (g')\i$, which is equivalent to the condition $(g')\i s g' \in T_w \cap {}^{G(\BF_q)} s$. Therefore we have 
	$$\# \{ g ' \in G(\BF_q) ; g'T_w (g') \i \subset G_s^0 \} = |G_s (\BF_q)| \cdot \big | T_w \cap {}^{G(\BF_q)} s \big |  $$
	by the orbit-stabilizer relation. The proposition follows. 
\end{proof} 
\begin{definition}\label{defn:TT}
	For each $\gamma \in T_w (\BF_q)$, define $$ \TT (w,\gamma) : =  \frac{|G_{\gamma} (\BF_q)| }{| G_{\gamma}^0 (\BF_q)| } \cdot \big| T_w \cap {}^{G(\BF_q)} \gamma \big | .$$ Since $T_w$ is well defined up to $G(\BF_q)$-conjugation, the above definition indeed only depends on $w$ and $\gamma$.
\end{definition}
\begin{corollary}\label{cor:summary of classical}
	Let $g \in G(\BF_q) \cap G^{\mathrm{reg}}$ and $w \in W$. Let $g = su$ be the Jordan decomposition. We have 
	$$ \tr (g \mid \coh^*_c(X_w)) = \begin{cases}
	0 , & \text{if }T_w \cap {}^{G(\BF_q)} s  = \emptyset ,\\
	\TT (w,\gamma), &\text{if }T_w \cap {}^{G(\BF_q)} s  \neq \emptyset.
	\end{cases} $$
	In the second case, $\gamma$ is any element of $T_w \cap {}^{G(\BF_q)} s$.  \end{corollary}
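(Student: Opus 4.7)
The plan is to deduce this corollary directly from Proposition \ref{prop:rough trace} together with Definition \ref{defn:TT}, the only non-formal point being independence of the choice of $\gamma$ in the second case.

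First I would handle the easy case: if $T_w \cap {}^{G(\BF_q)} s = \emptyset$, then the right-hand side of \eqref{eq:rough trace} vanishes because the factor $\big|T_w \cap {}^{G(\BF_q)} s\big|$ is zero, so Proposition \ref{prop:rough trace} gives $\tr(g,\coh^*_c(X_w)) = 0$, matching the stated value.

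Next I would treat the non-empty case. Pick any $\gamma \in T_w \cap {}^{G(\BF_q)} s$ and write $\gamma = h s h^{-1}$ for some $h \in G(\BF_q)$. Conjugation by $h$ is defined over $\BF_q$, so it identifies the centralizer $G_s$ with $G_\gamma$ as algebraic subgroups of $G$ stable under $\sigma$; hence $|G_s(\BF_q)| = |G_\gamma(\BF_q)|$ and $|G_s^0(\BF_q)| = |G_\gamma^0(\BF_q)|$. Moreover $\gamma$ and $s$ being $G(\BF_q)$-conjugate gives ${}^{G(\BF_q)} \gamma = {}^{G(\BF_q)} s$, so in particular $\big|T_w \cap {}^{G(\BF_q)} s\big| = \big|T_w \cap {}^{G(\BF_q)} \gamma\big|$. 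Substituting these equalities into the formula of Proposition \ref{prop:rough trace} yields
\[
\tr(g, \coh^*_c(X_w)) \;=\; \frac{|G_\gamma(\BF_q)|}{|G_\gamma^0(\BF_q)|} \cdot \big|T_w \cap {}^{G(\BF_q)} \gamma\big| \;=\; \TT(w,\gamma),
\]
which is the claim. The same argument incidentally shows that $\TT(w,\gamma)$ depends only on the $G(\BF_q)$-conjugacy class of $\gamma$, so the choice of representative in the statement does not matter.

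There is essentially no obstacle here: the substantive content is entirely in Proposition \ref{prop:rough trace}, whose proof invokes \cite[Theorem 4.2]{DL} and the fact that $u$ is regular in $G_s^0$ so that the Green function value is $1$ by \cite[Theorem 9.16]{DL}. The corollary is simply a repackaging of that identity in a form that will be convenient for the subsequent applications to fine Deligne--Lusztig varieties via the parabolic induction of Corollary \ref{cor:indcution of trace}.
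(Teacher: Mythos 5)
Your proof is correct and follows the paper's own route: the paper likewise deduces the corollary from Proposition \ref{prop:rough trace} by observing that the right-hand side of \eqref{eq:rough trace} depends only on the $G(\BF_q)$-conjugacy class of $s$, which is exactly the conjugation argument you spell out. Your version just makes that dependence explicit (via $\gamma = h s h^{-1}$ with $h \in G(\BF_q)$), which is fine.
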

\begin{proof}
	This follows from Proposition \ref{prop:rough trace}, by noting that the right hand side of (\ref{eq:rough trace}) only depends on the $G(\BF_q)$-conjugacy class of $s$. 
\end{proof}
\subsubsection{}\label{subsubsec:conn setting} Let $w\in W$ and $\gamma \in T_w (\BF_q)$. We will give a more explicit formula for $\TT(w,\gamma)$, under the assumption that $G_{\gamma}$ is connected. For example, if $G^{\mathrm{der}}$ is simply connected, then our assumption is always satisfied, by a result of Steinberg \cite[Corollary 8.5]{Steinberg-End} (cf.~\cite[p.~788]{kottwitzrational} or \cite[Theorem 3.5.6]{Carter}).

Assume $G_{\gamma}$ is connected. We canonically identify $W$ with  $N_G(T_w)/T_w$ via the pair $(T_w, B_w)$ fixed before. Then the Weyl group of $G_{\gamma}$ is a canonical subgroup $W(\gamma)$ of $W$, generated by the reflections associated to roots $\alpha$ in $\Phi (T_w, G)$ such that $\alpha(\gamma) =1$ (see \cite[Theorem 3.5.4]{Carter}). Denote by $F_w$ the automorphism $\Ad (w) \circ \sigma$ of $W$. Then $W(\gamma)$ is stable under $F_w$, as $\gamma$ is an $\BF_q$-point of $T_w$. 
\begin{lemma}\label{lem:compute TT}
	In the setting of \S \ref{subsubsec:conn setting}, we have $$\TT(w,\gamma) = \# \{ {}^x \gamma ;  x \in W, {}^x \gamma \in G(\BF_q)\} = \# (W/W(\gamma)) ^{F_w} . $$
\end{lemma}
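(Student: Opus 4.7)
The plan is to prove the two equalities separately, both exploiting the hypothesis that $G_\gamma$ is connected.

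For the first equality, I first note that $|G_\gamma(\BF_q)| = |G_\gamma^0(\BF_q)|$ by connectedness, so Definition \ref{defn:TT} reduces $\TT(w,\gamma)$ to $|T_w \cap {}^{G(\BF_q)}\gamma|$. I then show this set equals $\{{}^x\gamma : x \in W,\ {}^x\gamma \in G(\BF_q)\}$. For the inclusion $\subset$: if $\gamma' = g\gamma g^{-1} \in T_w$ with $g \in G(\BF_q)$, then both $T_w$ and $g^{-1}T_w g$ are maximal tori of the connected group $G_\gamma$, hence conjugate by some $h \in G_\gamma$, which forces $gh^{-1} \in N_G(T_w)$; its image $x \in W$ satisfies ${}^x\gamma = \gamma'$. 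For the inclusion $\supset$: pick a lift $\tilde x \in N_G(T_w)$ of $x$, and observe that $\sigma$-invariance of ${}^x\gamma = \tilde x \gamma \tilde x^{-1}$ together with $\sigma(\gamma) = \gamma$ forces $\tilde x^{-1}\sigma(\tilde x) \in G_\gamma$; Lang's theorem applied to the connected group $G_\gamma$ then yields $h \in G_\gamma$ with $\tilde x^{-1}\sigma(\tilde x) = h^{-1}\sigma(h)$, and $\tilde x h^{-1} \in G(\BF_q)$ conjugates $\gamma$ to ${}^x\gamma$.

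For the second equality, I would first identify the stabilizer $\mathrm{Stab}_W(\gamma)$ with $W(\gamma)$. This again uses connectedness of $G_\gamma$: the stabilizer is exactly the image of $N_G(T_w) \cap G_\gamma = N_{G_\gamma}(T_w)$ in $W = N_G(T_w)/T_w$, which by the standard description of the Weyl group of a connected reductive group is $W(\gamma)$. Therefore the orbit map $x \mapsto {}^x\gamma$ descends to a bijection $W/W(\gamma) \isom W \cdot \gamma$.

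It remains to match $F_w$-fixed cosets of $W/W(\gamma)$ with $\sigma$-rational elements of the orbit. The key point is that, under the identification $W \cong N_G(T_w)/T_w$ coming from the pair $(T_w, B_w)$ of type $w$, the natural Frobenius action on $N_G(T_w)/T_w$ transported to the canonical $W$ becomes precisely $F_w = \Ad(w)\circ \sigma$ (the twist reflects the fact that $T_w$ is obtained from a $\sigma$-stable torus by $w$-twisting, and is also what makes $W(\gamma) \subset W$ stable under $F_w$ rather than $\sigma$). Granting this, for a lift $\tilde x \in N_G(T_w)$ of $x \in W$, the image of $\sigma(\tilde x)$ in $W$ is $F_w(x)$; combined with $\sigma(\gamma)=\gamma$, the element ${}^x\gamma$ is $\sigma$-fixed if and only if $\tilde x^{-1}\sigma(\tilde x) \in G_\gamma$, equivalently $x^{-1}F_w(x) \in W(\gamma)$, equivalently the coset $xW(\gamma)$ is $F_w$-fixed. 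The main (and essentially only) delicate point of the whole argument is this bookkeeping of the $\sigma$-action on $W$ under the identification $W = N_G(T_w)/T_w$, which boils down to revisiting the recipe of \cite[\S 1]{DL} for constructing the pair $(T_w, B_w)$.
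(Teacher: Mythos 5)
Your proposal is correct and follows essentially the same route as the paper: connectedness of $G_\gamma$ removes the index factor, conjugacy of maximal tori in $G_\gamma$ shows $T_w\cap{}^{G}\gamma$ is the $W$-orbit of $\gamma$, Lang--Steinberg (your explicit Lang-theorem argument is just the concrete form of the paper's $H^1(\BF_q,G_\gamma)=0$) handles rationality, and the identification $\mathrm{Stab}_W(\gamma)=W(\gamma)$ together with the $F_w$-twisted Frobenius on $W=N_G(T_w)/T_w$ gives the count of fixed cosets. If anything, you spell out the Frobenius bookkeeping on $W/W(\gamma)$ that the paper leaves implicit in its one-line second equality.
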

\begin{proof}
	Since $G_{\gamma}$ is connected, it follows from the Lang--Steinberg theorem that $H^1 (\BF_q, G_{\gamma}) =0 $, and so ${}^{G(\BF_q)} \gamma =  {}^G\gamma  \cap G(\BF_q)$. Therefore 
	$$ \TT (w,\gamma) = \abs{T_w(\BF_q) \cap {}^G \gamma }. $$ 
	Now assume $h\in G$ satisfies $h \gamma h\i \in T_w$. Then $h\i T_w h \subset G_{\gamma}$. Sine $h\i T_w h$ and $T_w$ are two maximal tori of $G_{\gamma}$, there exists $c \in G_{\gamma}$ such that $h\i T_w h = c T_w c\i$. Then we have 
	$$h \gamma h\i = (hc) \gamma (hc)\i, \quad hc \in N_G(T_w).$$ 
	The above analysis shows that, 
	$$ \abs{T_w(\BF_q) \cap {}^G \gamma } = \# \{ {}^x \gamma ;  x \in W, {}^x \gamma \in G(\BF_q)\} .$$ This proves the first equality in the lemma. To prove the second equality, note that $$  \# \{ {}^x \gamma ;  x \in W, {}^x \gamma \in G(\BF_q)\} = \# (W/W_{\gamma}) ^{F_w},$$ where $W_\gamma$ is the stabilizer of $\gamma$ in $W$. Since $G_{\gamma}$ is connected, we have $W_{\gamma} = W(\gamma)$, see \cite[Theorem 3.5.3]{Carter}. \end{proof}  
\subsection{Combining the results}\label{subsec:combining results}
Keep the setting of \S \ref{subsec:setting}. For each $1\leq i \leq i _{\max}$, fix a $\sigma$-stable maximal torus $T_i$ in $G_i$ of type $w_i$.
Fix $\Gamma_i \subset T_i (\BF_q)$ to be a complete set of representatives of elements in $T_i (\BF_q)$ modulo $G_i(\BF_q)$-conjugacy. Fix $g \in G(\BF_q)$. For each $1\leq i \leq i_{\max}$ and each $\gamma \in \Gamma_i$, define $$ \widetilde {\mathcal M}_i ^g: = \set{r \in G (\BF_q); r^{-1} g r \in P_i (\BF_q)} , $$$$
\widetilde{\mathcal M}_i ^{g, \gamma}: = \{ r\in \widetilde{\mathcal M}_i ^g ;  (\pi_i(r^{-1}gr))_s \in {}^{G_i(\BF_q)}\gamma \} .$$ 
Here $(\pi_i (r^{-1} g r)) _s$ denotes the semi-simple part of $\pi_i (r^{-1} g r) \in G_i (\BF_q)$ in the Jordan decomposition. Note that $\widetilde{\mathcal M}_i ^g$ and $\widetilde{\mathcal M}_i ^{g, \gamma}$, if non-empty, are stable under right multiplication by $P_i(\BF_q)$. We denote $$\mathcal M_i ^g : = \widetilde{\mathcal M}_i^g / P_i (\BF_q) , \qquad \mathcal M_i ^{g,\gamma} : = \widetilde{\mathcal M}_i^{g,\gamma} / P_i (\BF_q).$$ For $\gamma \in \Gamma_i \subset T_i(\BF_q)$, we also define $\TT (w_i, \gamma)$ as in Definition \ref{defn:TT}, with respect to $\mathbb G_i$ and $w_i \in W_i$.
\begin{theorem}\label{thm:char formula} Fix $g \in G(\BF_q) \cap G^{\mathrm{reg}}$. Then
	$$\tr (g , J, \mathscr L) = \sum _{i=1} ^{i_{\max}}\sum _{\gamma \in \Gamma_i} \# \mathcal M_i ^{g, \gamma} \cdot \TT (w_i, \gamma).   $$  
\end{theorem}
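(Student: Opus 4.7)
My plan is to assemble the theorem from the preparatory results already in place, with essentially no new geometric input required. The starting point is Corollary \ref{cor:decomp of trace}, which reduces the computation to evaluating $\tr(g, \coh^*_c(X_{J,w_i}))$ for each $1 \leq i \leq i_{\max}$ separately. For each such $i$, Corollary \ref{cor:indcution of trace} identifies $\coh^*_c(X_{J,w_i})$ with the virtual $G(\BF_q)$-representation induced from the virtual $P_i(\BF_q)$-representation $\coh^*_c(X^{\mathbb G_i}_{w_i})$ (acting via $\pi_i$). Applying the standard Frobenius formula for the character of an induced representation, we obtain
\begin{equation*}
\tr(g, \coh^*_c(X_{J,w_i})) = \sum_{r \in \mathcal M_i^g} \tr\bigl(\pi_i(r^{-1}gr),\, \coh^*_c(X^{\mathbb G_i}_{w_i})\bigr),
\end{equation*}
where the sum is well-defined because conjugation by $P_i(\BF_q)$ preserves the trace on the right.

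The next step is to evaluate each summand using the character formula on the classical Deligne--Lusztig variety $X^{\mathbb G_i}_{w_i}$ provided by Corollary \ref{cor:summary of classical}. To invoke this, I need to verify that $\pi_i(r^{-1}gr)$ is a regular element of $G_i$. Since $g\in G^{\mathrm{reg}}$ and regularity is conjugation-invariant, $r^{-1}gr$ lies in $P_i \cap G^{\mathrm{reg}}$; Proposition \ref{prop:projection of reg} then places its image in the Levi $L_i$ inside $L_i^{\mathrm{reg}}$. Since $L_i \to L_i^{\natural}$ is a central isogeny, it preserves regularity, and the factor projection $L_i^{\natural} = G_i \times H_i \to G_i$ sends regular elements to regular elements (centralizers in a direct product split as products). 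Hence $\pi_i(r^{-1}gr) \in G_i^{\mathrm{reg}}$, and Corollary \ref{cor:summary of classical} applies: writing $s_r$ for the semisimple part of $\pi_i(r^{-1}gr)$, the trace equals $\TT(w_i, \gamma)$ if $T_i \cap {}^{G_i(\BF_q)} s_r \neq \emptyset$ (where $\gamma$ is any element of this intersection) and vanishes otherwise.

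The final step is to reorganize the sum over $r \in \mathcal M_i^g$ by the $G_i(\BF_q)$-conjugacy class of $s_r$. By construction, the nonvanishing contributions come precisely from those $r$ whose $s_r$ is $G_i(\BF_q)$-conjugate to some $\gamma\in \Gamma_i$, in which case $r \in \mathcal M_i^{g,\gamma}$ and the summand equals $\TT(w_i, \gamma)$; all other $r$ contribute zero because then $T_i \cap {}^{G_i(\BF_q)} s_r = \emptyset$. Summing gives
\begin{equation*}
\tr(g, \coh^*_c(X_{J,w_i})) = \sum_{\gamma \in \Gamma_i} \#\mathcal M_i^{g,\gamma} \cdot \TT(w_i, \gamma),
\end{equation*}
and summing over $i$ yields the theorem. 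The only real subtlety is the regularity verification in the second paragraph, since without it the classical Deligne--Lusztig character formula in the form we use (which exploits that the unipotent part $u$ is regular in $G_{s}^0$ and hence that the relevant Green function value is $1$) would not apply; everything else is an exercise in assembly.
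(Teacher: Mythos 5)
Your proposal is correct and follows essentially the same route as the paper: decompose via Corollary \ref{cor:decomp of trace}, apply the parabolic induction isomorphism of Corollary \ref{cor:indcution of trace} together with the induced-character formula, verify regularity of $\pi_i(r^{-1}gr)$ via Proposition \ref{prop:projection of reg}, and then invoke Corollary \ref{cor:summary of classical} and sort the cosets by the $G_i(\BF_q)$-conjugacy class of the semisimple part. The only cosmetic difference is that you sum over cosets $\mathcal M_i^g$ directly where the paper sums over $\widetilde{\mathcal M}_i^g$ and divides by $|P_i(\BF_q)|$, and you spell out the (correct) detail, left implicit in the paper, that regularity passes through the central isogeny $L_i\to L_i^{\natural}$ and the factor projection to $G_i$.
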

\begin{proof}
	By Corollary \ref{cor:decomp of trace} and Corollary \ref{cor:indcution of trace}, we have
	\begin{align}\label{eq:step 1}
	\tr(g, J ,\mathscr L) = \sum _{i=1} ^{i_{\max}} \abs{P_i (\BF_q)} ^{-1}\sum_{r \in \widetilde{\mathcal M}_i ^{g} } \tr \bigg(\pi _i ( r \i g r) \mid \coh ^*_c (X^{\mathbb G_i} _{w_i}) \bigg) .
	\end{align} 
	Fix $1\leq i \leq i_{\max}$. For any $r \in \widetilde{\mathcal M}_i ^{g}$, it follows from Proposition \ref{prop:projection of reg} that the image of $r\i g r$ under $P_i \to L_i$ is regular in $L_i$. It easily follows that $\pi_i (r \i g r)$ is regular in $G_i$. We may hence apply Corollary \ref{cor:summary of classical} to get 
	\begin{align}\label{eq:step 2}
	\sum_{r \in \widetilde{\mathcal M}_i ^{g} } \tr \bigg(\pi _i ( r \i g r) \mid \coh ^*_c (X^{\mathbb G_i} _{w_i}) \bigg)  =  \sum _{\gamma \in \Gamma_i } \sum_{r \in \widetilde{\mathcal M}_i ^{g, \gamma} } \TT(w_i,\gamma) =\sum_{\gamma \in \Gamma _i} \# \widetilde{\mathcal M}_i ^{g, \gamma}  \cdot \TT(w_i,\gamma) .
	\end{align}
	Combining (\ref{eq:step 1}) and (\ref{eq:step 2}), we obtain 
	\[\pushQED{\qed} \tr (g, J,\mathscr L) = \sum _{i=1} ^{i_{\max}} \sum _{\gamma \in \Gamma _i} \abs{P_i (\BF_q)}^{-1} \# \widetilde{\mathcal M}_i ^{g, \gamma}  \cdot \TT(w_i,\gamma) = \sum _{i=1} ^{i_{\max}}\sum _{\gamma \in \Gamma_i} \# \mathcal M_i ^{g, \gamma} \cdot \TT (w_i, \gamma).  \qedhere  \]
\end{proof}

\section{Basic loci of Shimura varieties of Coxeter type}
\label{sec:basic loci}
The notion of basic loci of Coxeter type in Shimura varieties is introduced in \cite{GH}. The basic loci in these cases can be decomposed into a finite union of Ekedahl--Oort strata indexed by the set $\text{EO}^K_{\s, \text{cox}}$ defined in \cite[\S 5.1]{GH}, and each Ekedahl--Oort stratum is a union of classical Deligne--Lusztig varieties of Coxeter type. We have the following classification theorem.

\begin{table}[h]
	\centering\caption{$\sigma$-unbranched data}\label{tab:coxeter}
	\begin{tabular}{|c|c l |}
		\hline
		\text{Enhanced Tits datum} & \text{$\s$-unbranched datum $(\mathbb G, J, \mathscr L = \sett{\rr_1,\rr_2, \cdots, \rr_a})$} &
		\\
		\hline
		$(A_n, \o^\vee_1, \BS)$ & $(\text{trivial group}, \emptyset, \emptyset)$ &
		\\
		\hline
		$(B_n, \o^\vee_1, \BS)$ & $({}^2 D_n, \BS-\{s_{n-1} \}, \sett{s_1,\cdots, s_{n-1}})$ & \\
		\hline
		$(B_n, \o^\vee_1, \tilde \BS-\{n\})$ & $(B_{n-1}, \BS-\{s_{n-1}\}, \sett{s_1,\cdots, s_{n-1}} )$ &
		\\
		\hline
		$(B$-$C_n, \o^\vee_1, \BS)$ & $({}^2 D_n, \BS-\{s_{n-1} \}, \sett{s_1,\cdots, s_{n-1}} )$ &
		\\
		\hline
		$(B$-$C_n, \o^\vee_1, \tilde \BS-\{n\})$ & $(B_{n-1}, \BS-\{s_{n-1}\}, \sett{s_1,\cdots, s_{n-1}} )$ &
		\\
		\hline
		$(C$-$B_n, \o^\vee_1, \BS)$ & $(B_{n}, \BS-\{s_n\}, \sett{s_1,\cdots, s_{n}}  )$
		&\\
		\hline
		$(C$-$BC_n, \o^\vee_1, \BS)$ & $(B_{n}, \BS-\{s_n\} ,  \sett{s_1,\cdots, s_{n}} )$
		&\\
		\hline
		$(C$-$BC_n, \o^\vee_1, \tilde \BS-\{n\})$ & $(C_{n}, \BS-\{s_n\},  \sett{s_1,\cdots, s_{n}})$
		&\\
		\hline
		$(D_n, \o^\vee_1, \BS)$ & $({}^2 D_{n-1}, \BS-\{s_{n-2}\},  \sett{s_1,\cdots, s_{n-2}})$
		&\\
		\hline
		$({}^2 A'_n, \o^\vee_1, \BS)$  &  $({}^2 A_{2 m}, \BS-\{s_m\}, \sett{s_1,\cdots, s_m}), ~ m : = \lfloor{\frac{n-1}{2}}\rfloor$
		&\\
		\hline
		$({}^2 B_n, \o^\vee_1, \tilde \BS-\{n\})$ & $(B_n, \BS-\{s_n\}, \sett{s_1,\cdots, s_{n}})$
		&\\
		\hline
		$({}^2 B$-$C_n, \o^\vee_1, \tilde \BS-\{n\})$ & $(C_n, \BS-\{s_n\}, \sett{s_1,\cdots, s_{n}} )$
		&\\
		\hline
		$({}^2 D_n, \o^\vee_1, \BS)$ & $({}^2 D_n, \BS-\{s_{n-1}\}, \sett{s_1,\cdots, s_{n-1}})$    
		&\\
		\hline
		$(A_3, \o^\vee_2, \BS)$ & $({}^2 (A_1 \times A_1), \{s_1\}, \sett{s_2})$
		& * \\
		\hline
		$({}^2 A'_3, \o^\vee_2, \BS)$ & $({}^2 A_3, \{s_2, s_3\}, \sett{s_2, s_1}) $
		& * \\
		\hline
		$(C_2, \o^\vee_2, \BS)$  & $({}^2 (A_1 \times A_1), \{s_1\}, \sett{s_2}) $
		& * \\
		\hline
		$(C_2, \o^\vee_2, \tilde \BS-\{1\})$ & $(A_1, \emptyset, (s_1))$
		&\\
		\hline
		$({}^2 C_2, \o^\vee_2, \tilde \BS-\{1\})$ & $(B_2, \{s_1\}, (s_1,s_2))$
		&\\
		\hline
		$({}^2 C$-$B_2, \o^\vee_1, \tilde \BS-\{1\})$ & $(B_2, \{s_2\}, (s_2,s_1))$
		& * \\
		\hline
	\end{tabular}
\end{table}

\begin{theorem}\cite[Theorem A]{GH}\label{classification}
	The irreducible enhanced Tits data of Coxeter type for $\s$-stable maximal $K$ are classified in the first column of Table~\ref{tab:coxeter}. 
\end{theorem}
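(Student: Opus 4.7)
Since this statement is quoted verbatim from \cite[Theorem A]{GH}, my plan is to sketch how one would recover the classification rather than attempt an independent proof. The argument is a case-by-case analysis of irreducible enhanced Tits data $(\Delta_0, \mu, K)$, where $\Delta_0$ is an irreducible (possibly twisted) affine Dynkin diagram, $\mu$ is a minuscule coweight of the associated finite root system, and $K \subsetneq \tilde\BS$ is a $\s$-stable subset that is maximal in a suitable sense.

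First, I would enumerate the candidate pairs $(\Delta_0, \mu)$. Using the standard classification of minuscule coweights, only the classical series $A_n, B_n, C_n, D_n$ and their twisted or mixed-type forms (e.g.\ $B\text{-}C_n$, $C\text{-}B_n$, $C\text{-}BC_n$, ${}^2\!A'_n$, ${}^2\!B_n$, ${}^2\!D_n$, ${}^2C\text{-}B_2$) can possibly yield Coxeter-type data: the exceptional affine types and the non-$\omega_1^\vee$ minuscule coweights in higher rank will be ruled out because the resulting basic locus has too large a Newton stratum or too rich an Ekedahl--Oort stratification to be Coxeter. The only coweights other than $\omega_1^\vee$ that need to be considered are the small-rank exceptions $\omega_2^\vee$ in types $A_3$ and $C_2$, which account for the starred rows of Table~\ref{tab:coxeter}.

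Second, for each surviving pair $(\Delta_0, \mu)$ I would list the $\s$-stable subsets $K \subsetneq \tilde\BS$ that are maximal. Combinatorially these correspond to the $\s$-orbits of nodes in the extended Dynkin diagram of $\Delta_0$, and maximality means removing exactly one such orbit. The finite collection of allowed removals for each diagram produces precisely the entries $\BS$, $\tilde\BS-\{n\}$, $\tilde\BS-\{1\}$, etc.\ appearing in the first column.

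Third, for each candidate triple $(\Delta_0, \mu, K)$ I would verify the Coxeter-type condition by computing the index set $\mathrm{EO}^K_{\s,\mathrm{cox}}$ of \cite[\S 5.1]{GH}, and checking that every Ekedahl--Oort stratum in the basic locus is a union of classical Deligne--Lusztig varieties attached to a $\s$-twisted Coxeter element of a parabolic subgroup of the finite Weyl group. Equivalently, one inspects the $\mu$-admissible elements in the double cosets $W_K \backslash \tilde W / W_K$ and tests the combinatorial Coxeter criterion against the $\s$-action. The main obstacle is that this verification is genuinely intricate and must be carried out separately for each row: for the surviving cases one exhibits the Coxeter element explicitly, while for the excluded cases one must produce an element of $\mathrm{EO}^K_{\s,\mathrm{cox}}$ that violates the Coxeter condition. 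The twisted diagrams require particular care, since the $\s$-action couples vertices in ways that differ from the untwisted situation, and the small-rank cases $A_3, C_2$ need to be handled by hand.
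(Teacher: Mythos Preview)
The paper does not provide its own proof of this statement; it is stated with the citation \cite[Theorem A]{GH} and no proof environment follows. You correctly recognize this and offer a reasonable high-level sketch of the case-by-case strategy actually carried out in \cite{GH}, but there is nothing in the present paper to compare your proposal against.
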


We list in the second column of Table~\ref{tab:coxeter} the associated $\s$-unbranched data. In each case, let $w$ be the maximal element in $\text{EO}^K_{\s, \text{cox}}$ computed in \cite[\S 6]{GH}. Then the reductive group $\mathbb G$ over $\BF_q$ is the reductive quotient of the parahoric subgroup associated to $\supp_\s(w)$, and we have $J=K \cap \supp_\s(w)$. In each case it turns out that $J$ is $\s$-unbranched, and that there is a unique $\s$-unbranched datum of the form $(J,\mathscr L)$. In table Table~\ref{tab:coxeter} we record the type of $\mathbb G$, the set $J$, and the nodes $(\rr_1,\cdots, \rr_a)$ of the unique $\mathscr L$ in the order as in (\ref{eq:order of nodes}). We let $s_i \in \BS$ denote the $i$-th node, according to Bourbaki's numbering \cite{bourbaki}. In all except the four cases marked with $*$, we have $\rr_i = s_i$ for all $1\leq i \leq a$.

Consequently, the associated fine Deligne--Lusztig varieties come in four infinite families: \begin{enumerate}
	\item \label{item:even ortho} $\mathbb G$ is the non-split even special orthogonal group $\SO_{2n}$, $J=\BS-\{s_{n-1}\}$, $\mathscr L = (s_1, \cdots, s_{n-1})$.
	\item \label{item:odd ortho} $\mathbb G$ is the odd special orthogonal group $\SO_{2n+1}$, $J=\BS-\{s_n\}$, $\mathscr L = (s_1, \cdots, s_{n})$.
	\item \label{item:symp} $\mathbb G$ is the symplectic group $\Sp_{2n}$, $J=\BS-\{s_n\}$, $\mathscr L = (s_1, \cdots, s_n)$.
	\item \label{item:unitary} $\mathbb G$ is the odd unitary group $\UU_{2n+1}$, $J=\BS-\{s_n\}$, $\mathscr L = (s_1, \cdots, s_n)$.  
\end{enumerate}

\section{Explicit character formulas}\label{sec:some-char-form}

In this section, we use Theorem \ref{thm:char formula} to compute $\tr (g, J, \mathscr L)$ for the four infinite families specified at the end of \S \ref{sec:basic loci}. We shall only consider $g\in G(\BF_q)$ whose image in $\GL_N$ under the standard representation is regular. This is a stronger hypothesis than requiring $g$ to be regular in $G$, except for the unitary case. However, for the known arithmetic applications this is enough (see \S \ref{sec:application}). We first need some preparations in \S \ref{subsec:rec} and \S \ref{subsec:GL reg}. 

\subsection{Reciprocal of polynomials}\label{subsec:rec}
We shall work with the base field $\BF_q$, but we shall consider polynomials $f(\lambda)$ in $\BF_q [\lambda]$ or $\BF_{q^2} [\lambda]$. These will appear as characteristic polynomials of elements in orthogonal or symplectic groups over $\BF_q$, or unitary groups of $\BF_{q^2}/\BF_q$-Hermitian spaces. Recall that $\s$ is the Frobenius automorphism of $k  = \overline{ \BF}_q$ over $\BF_q$. For $x\in k$, we write $x^{\s}$ for the image of $x$ under $\s$, i.e., $x^{\s} : = x^q$.

\begin{definition}
	For a polynomial $f\in \BF_{q^2}[\lambda]$ with $f(0) \neq 0$, we define its \emph{reciprocal polynomial} as $$f^*(\lambda):=(f(0)^{\s})^{-1}\cdot \lambda^{\deg f}\cdot f(1/\lambda)^\s \in \BF_{q^2}[\lambda].$$ We call $f\in \BF_{q^2}[\lambda]$ \emph{self-reciprocal}, if $f(0) \neq 0$ and $f=f^*$. (In particular, self-reciprocal polynomials are monic.) These definitions restrict to polynomials in $\BF_q [\lambda]$.
\end{definition}
\begin{remark}
	If $f(\lambda) \in \BF_{q^2} [\lambda]$ is monic and has factorization $f(\lambda) = \prod _j (\lambda -\lambda _j)$ with each $\lambda_j \in k^{\times}$, we have $f^*(\lambda) = \prod _j (\lambda - (\lambda_j ^{\s})^{\i})$. If in addition $f(\lambda) \in \BF_q [\lambda]$, then we also have $f^*(\lambda) = \prod _j (\lambda  - \lambda_j ^{-1}). $
\end{remark}
\begin{definition}
	We denote by $\mathsf {Irr}^{\times}$ the set of monic irreducible polynomials in $\BF_q [\lambda]$ with non-zero constant terms. We let $\mathsf {SR} \subset \mathsf {Irr}^{\times}$ be the subset of self-reciprocal irreducible polynomials, and let $\mathsf{NSR}: = (\mathsf {Irr}^{\times}  - \mathsf {SR} ) /*$ be the set of unordered pairs $\set{Q, Q^*}$ of monic irreducible polynomials reciprocal to each other with non-zero constant terms. Similarly, we denote by $\mathsf {Irr}_2^{\times}$ the set of monic irreducible polynomials in $\BF_{q^2} [\lambda]$ with non-zero constant terms. We let $\mathsf {SR}_2 \subset \mathsf {Irr}_2^{\times}$ be the subset of self-reciprocal irreducible polynomials, and let $\mathsf{NSR}_2: = (\mathsf {Irr}_2^{\times}  - \mathsf {SR}_2 ) /*$.   
\end{definition}	\begin{lemma}\label{lem:shape of factorization}
	If $f \in \BF_q[\lambda]$ is self-reciprocal, then its irreducible factorization is of the form 
	\begin{align}\label{eq:factorization}
	f = \prod _{Q \in \mathsf{SR}} Q ^{m_Q(f)} \prod _{\set{Q,Q^*} \in \mathsf {NSR} } (QQ^*) ^{m_{\set{Q, Q^*}}(f)}, 
	\end{align}
	for unique non-negative integers $m_Q(f), m_{\set{Q,Q^*}}(f)$.
	Similarly, if $f \in \BF_{q^2}[\lambda] $ is self-reciprocal, then we have 
	\begin{align}\label{eq:factorization 2}
	f = \prod _{Q \in \mathsf{SR}_2} Q ^{m_Q(f)} \prod _{\set{Q,Q^*} \in \mathsf {NSR}_2 } (QQ^*) ^{m_{\set{Q, Q^*}}(f)},
	\end{align}
	for unique non-negative integers $m_Q(f), m_{\set{Q,Q^*}}(f)$.
\end{lemma}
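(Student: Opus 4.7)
The plan is to deduce both parts of the lemma directly from unique factorization in $\BF_q[\lambda]$ (respectively $\BF_{q^2}[\lambda]$), combined with three elementary properties of the operation $f \mapsto f^*$: it is multiplicative, involutive, and preserves irreducibility on the set of polynomials with nonzero constant term.

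First I would verify these three properties from the definition. Multiplicativity, $(fg)^* = f^* g^*$, is a direct expansion using that $\sigma$ is a ring homomorphism and that $\deg(fg) = \deg f + \deg g$, together with $(fg)(0) = f(0)g(0)$. For the involution property, I would work at the level of roots in $k^\times$: if $f \in \BF_{q^2}[\lambda]$ is monic with roots $\lambda_j$, then a short calculation shows $f^*(\lambda) = \prod_j (\lambda - \lambda_j^{-q})$, so $(f^*)^*$ has root multiset $\{\lambda_j^{q^2}\}$; since $f \in \BF_{q^2}[\lambda]$, its root multiset is stable under the Frobenius of $k$ over $\BF_{q^2}$, i.e.\ under $x \mapsto x^{q^2}$, and hence $(f^*)^* = f$. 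The case $f \in \BF_q[\lambda]$ is the specialization in which the twist is trivial and $f^*$ has root multiset $\{\lambda_j^{-1}\}$. Irreducibility preservation then follows formally: if $Q$ is irreducible with $Q(0) \neq 0$ and $Q^* = AB$ is a nontrivial factorization, then both $A$ and $B$ have nonzero constant terms (otherwise $Q^*(0) = 0$, contradicting $Q^{**} = Q$ and $Q(0) \neq 0$), so $Q = (Q^*)^* = A^* B^*$ would be a nontrivial factorization of $Q$.

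Now, given $f = f^*$, I would write the unique monic irreducible factorization of $f$ over the appropriate field and apply $*$. Multiplicativity and irreducibility preservation show that $*$ acts as an involution on the multiset of monic irreducible factors of $f$. Since $f = f^*$, uniqueness of factorization forces this involution to fix the multiset. Consequently each self-reciprocal irreducible $Q$ may appear with an arbitrary multiplicity $m_Q(f) \ge 0$, while each non-self-reciprocal $Q$ must appear with exactly the same multiplicity as $Q^*$; grouping these equal multiplicities as $m_{\{Q,Q^*\}}(f)$ yields the stated expressions (\ref{eq:factorization}) and (\ref{eq:factorization_2}). Uniqueness of the exponents $m_Q(f)$ and $m_{\{Q,Q^*\}}(f)$ is immediate from unique factorization.

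Both parts of the lemma follow from precisely the same argument, with the only difference being the ambient polynomial ring and whether the Frobenius twist in the definition of $*$ is trivial. No genuine obstacle is anticipated here; the entire content is bookkeeping on top of unique factorization. The one step worth checking carefully is the involution property in the $\BF_{q^2}$ case, since there the presence of the $\sigma$-twist makes $*$ not an involution on arbitrary polynomials over $k$, and one really does need the hypothesis $f \in \BF_{q^2}[\lambda]$ to ensure $\sigma^2$-stability of the root multiset.
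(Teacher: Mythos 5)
Your argument is correct and is exactly the route the paper takes: its proof is the one-line observation that the lemma follows from unique factorization in $\BF_q[\lambda]$ and $\BF_{q^2}[\lambda]$, and your verification that $*$ is multiplicative, involutive on monic polynomials with nonzero constant term, and irreducibility-preserving simply fills in the bookkeeping behind that observation. No gaps; the one point you flag (needing $f\in\BF_{q^2}[\lambda]$ so the root multiset is $\sigma^2$-stable for the involution property) is indeed the only place where care is required.
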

\begin{proof}This easily follows from unique factorization in $\BF_q[\lambda]$ and $\BF_{q^2}[\lambda]$.
\end{proof}
\begin{definition}\label{defn:MMM}
	Let $f \in \BF_q[\lambda]$ be self-reciprocal. Define $m_Q(f), m_{\set{Q,Q^*}}(f)$ as in (\ref{eq:factorization}). Define $$ \MMM(f) : = \prod _{ \set{Q, Q^*} \in \mathsf{NSR} } (1 + m _{\set{Q,Q^*}}(f)).$$
	Similarly, let $f \in \BF_{q^2}[\lambda]$ be self-reciprocal. Define $m_Q(f), m_{\set{Q,Q^*}}(f)$ as in (\ref{eq:factorization 2}). Define $$ \MMM_2(f) : = \prod _{ \set{Q, Q^*} \in \mathsf{NSR}_2} (1 + m _{\set{Q,Q^*}}(f)).$$
\end{definition}
\begin{lemma}\label{lem:MMM}
	Let $f \in \BF_q [\lambda]$ be self-reciprocal. Assume there is a unique element $Q_0 \in \mathsf{SR}$ such that $m_{Q_0}(f)$ is odd. Let $m$ be an odd integer such that $1\leq m \leq m_{Q_0}(f)$. Then 
	$$\# \set{U \in \BF_q [\lambda]^{\mathrm{monic}} ; UU^* = f/Q_0 ^{m}} = \MMM(f). $$ Similarly, let $f \in \BF_{q^2} [\lambda]$ be self-reciprocal. Assume there is a unique element $Q_0 \in \mathsf{SR}_2$ such that $m_{Q_0}(f)$ is odd. Let $m$ be an odd integer such that $1\leq m \leq m_{Q_0}(f)$. Then 
	$$\# \set{U \in \BF_{q^2} [\lambda]^{\mathrm{monic}} ; UU^* = f/Q_0 ^{m}} = \MMM_2(f). $$ 
\end{lemma}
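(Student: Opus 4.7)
The plan is to exploit unique factorization in $\BF_q[\lambda]$ (resp.~$\BF_{q^2}[\lambda]$) together with the observation that reciprocation is an involution on the set of monic irreducibles with nonzero constant term, which fixes exactly the elements of $\mathsf{SR}$ (resp.~$\mathsf{SR}_2$). We treat both halves of the lemma by the same argument; we write out the $\BF_q$-case.

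Set $g:=f/Q_0^{m}$. Since $f$ and $Q_0$ are self-reciprocal, so is $g$, and applying Lemma \ref{lem:shape of factorization} to $g$ we get $m_{Q_0}(g)=m_{Q_0}(f)-m$ (even by hypothesis), $m_Q(g)=m_Q(f)$ (even by the uniqueness hypothesis) for all other $Q\in\mathsf{SR}$, and $m_{\{Q,Q^*\}}(g)=m_{\{Q,Q^*\}}(f)$ for all $\{Q,Q^*\}\in\mathsf{NSR}$.

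Next, for any monic $U\in\BF_q[\lambda]$ dividing $g$, fix a section $\mathsf{NSR}\to\mathsf{Irr}^{\times}$ by choosing one representative $R$ from each pair, and write uniquely
\[
U=\prod_{Q\in\mathsf{SR}}Q^{a_Q}\prod_{\{R,R^*\}\in\mathsf{NSR}}R^{b_R}(R^*)^{c_R},\qquad a_Q,b_R,c_R\in\ZZ_{\ge0}.
\]
Since the constant term of $U$ is nonzero (as $g(0)\ne0$), $U^*$ is defined and monic. A direct calculation using $Q^{**}=Q$ gives
\[
U^*=\prod_{Q\in\mathsf{SR}}Q^{a_Q}\prod_{\{R,R^*\}\in\mathsf{NSR}}R^{c_R}(R^*)^{b_R},\qquad UU^*=\prod_{Q\in\mathsf{SR}}Q^{2a_Q}\prod_{\{R,R^*\}\in\mathsf{NSR}}(RR^*)^{b_R+c_R}.
\]
Comparing exponents with the factorization of $g$, the equation $UU^*=g$ is equivalent to $2a_Q=m_Q(g)$ for every $Q\in\mathsf{SR}$, and $b_R+c_R=m_{\{R,R^*\}}(f)$ for every $\{R,R^*\}\in\mathsf{NSR}$. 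The first family of equations has a unique solution because every $m_Q(g)$ is even, and the second family has exactly $\prod_{\{R,R^*\}}(1+m_{\{R,R^*\}}(f))=\MMM(f)$ solutions. This yields the claimed count.

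The only step requiring even mild care is verifying that all exponents $m_Q(g)$ for $Q\in\mathsf{SR}$ are even; this is where the hypotheses that $Q_0$ is the \emph{unique} self-reciprocal factor of $f$ with odd multiplicity and that $m$ is odd enter, and it is not really an obstacle. The $\BF_{q^2}$-case is formally identical, replacing $\mathsf{SR},\mathsf{NSR},\MMM$ by $\mathsf{SR}_2,\mathsf{NSR}_2,\MMM_2$ throughout.
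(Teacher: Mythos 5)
Your argument is correct and is essentially the paper's own proof: both set $h=f/Q_0^m$, use unique factorization and the involution $Q\mapsto Q^*$ to see that $m_Q(h)$ is even for every $Q\in\mathsf{SR}$ (this is where uniqueness of $Q_0$ and oddness of $m$ enter), and then count solutions by distributing each exponent $m_{\set{Q,Q^*}}(f)$ over the pair $\set{Q,Q^*}$, giving $1+m_{\set{Q,Q^*}}(f)$ choices per pair. Your write-up just makes the exponent comparison slightly more explicit; no gap.
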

\begin{proof}We only prove the statement about $\MMM(f)$, the other statement being similar. Write $h: = f/Q_0^m$. For any $Q \in \mathsf{SR}$, $m_Q(h)$ is even. For any $\set{Q, Q^*} \in \mathsf{NSR}$, $m_{\set{Q, Q^*}} (h ) = m _{\set{Q,Q^*}} (f)$. Now any $U\in \BF_q [\lambda] ^{\mathrm{monic}}$ with $UU^* = h$ is given by $$U = \prod _{Q \in \mathsf {SR}} Q ^{\frac{m_Q(h)}{2}} \prod _{\set{Q, Q^*} \in \mathsf {NSR} } U_{\set{Q, Q^*}},$$ where each $U_{\set{Q, Q^*}} = Q^i (Q^*)^j$, for any of the $1+ m_{\set{Q,Q^*}}(h)$ possible choices of pairs of non-negative integers $(i,j)$ satisfying $i+j = m _{\set{Q,Q^*}} (h)$.
\end{proof}
\begin{definition}\label{defn:even adm}
	Let $f \in \mathsf {SR}$ of even degree $d$. By an \emph{admissible enumeration} of the roots of $f$, we mean an enumeration of the $d$ distinct roots of $f$ in $k^{\times}$ of the form
	$ \lambda_1 ,\cdots, \lambda_{{\frac{d}{2}}}, \lambda_1 ^{-1},\cdots, \lambda_{{\frac{d}{2}}}^{-1}$ such that 
	$$\lambda_1 ^{\sigma} = \lambda_2, \lambda_2^{\s} = \lambda _3, \cdots, \lambda_{{\frac{d}{2}}-1} ^{\sigma} = \lambda_{{\frac{d}{2}}}, \lambda_{{\frac{d}{2}}}^\s = \lambda_1^{-1}.$$
\end{definition}

\begin{lemma}\label{lem:even deg}
	Let $f \in \mathsf {SR}$ of degree $d$. Then either $d$ is even or $f(\lambda) = \lambda  \pm 1$. When $d$ is even, there are precisely $d $ distinct admissible enumerations of the roots of $f$, all obtained from a given one by powers of a cyclic permutation of order $d$.
\end{lemma}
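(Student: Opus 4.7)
The plan is to first dispose of the parity statement using self-reciprocity alone, and then analyze admissible enumerations via the interplay between the inversion map and the Frobenius.

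For the parity claim, I would observe that $f = f^*$ means the multiset of roots of $f$ in $k^\times$ is stable under the involution $\iota \colon x \mapsto x^{-1}$, whose fixed points in $k^\times$ are $\pm 1$. If $d$ is odd, then $\iota$ acting on the $d$ distinct roots of $f$ must have at least one fixed point (since $\iota$-orbits have size $1$ or $2$), producing a root in $\set{\pm 1} \subset \BF_q$. Since $f$ is irreducible over $\BF_q$, this forces $d = 1$ and hence $f = \lambda \mp 1$. From now on assume $d \ge 2$ is even.

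The key observation for the even case is that $\iota$ commutes with $\s$ (since $(x^q)^{-1} = (x^{-1})^q$ in $k^\times$) and preserves the set of roots of $f$. Because $f$ is irreducible of degree $d$, the Galois group $\langle \s \rangle$ acts simply transitively on the $d$ roots, so $\iota$ acts on the roots as $\s^k$ for a well-defined $k \in \BZ/d\BZ$. The relation $\iota^2 = \id$ gives $2k \equiv 0 \pmod d$, so $k \in \set{0, d/2}$; and $k = 0$ would produce a root equal to $\pm 1$, contradicting $d \ge 2$. Hence $\iota|_{\mathrm{roots}} = \s^{d/2}$, i.e., $\s^{d/2}(\lambda) = \lambda^{-1}$ for every root $\lambda$ of $f$.

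With this identity in hand, I would pick any root $\mu$ and set $\lambda_i := \s^{i-1}(\mu)$ for $1 \le i \le d/2$. The conditions $\lambda_i^\s = \lambda_{i+1}$ are immediate, and $\lambda_{d/2}^\s = \s^{d/2}(\mu) = \mu^{-1} = \lambda_1^{-1}$ follows from the previous step. Moreover, the concatenated sequence $\lambda_1,\ldots,\lambda_{d/2},\lambda_1^{-1},\ldots,\lambda_{d/2}^{-1}$ coincides with $\mu, \s(\mu),\ldots, \s^{d-1}(\mu)$, enumerating all $d$ roots of $f$. Conversely, every admissible enumeration is uniquely determined by its first term $\lambda_1$, which can be any of the $d$ roots; this produces exactly $d$ admissible enumerations. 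Passing from $\mu$ to $\s(\mu)$ translates into the cyclic shift $(\lambda_1,\ldots,\lambda_{d/2}) \mapsto (\lambda_2,\ldots,\lambda_{d/2},\lambda_1^{-1})$ on the first half, a permutation of order $d$ that acts simply transitively on admissible enumerations. The only real subtlety lies in the case analysis ruling out $k = 0$, which is precisely what distinguishes the even- and odd-degree situations.
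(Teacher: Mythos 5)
Your proof is correct. The parity step is the same as the paper's (the inversion $x\mapsto x^{-1}$ preserves the roots and must fix one of them when $d$ is odd, forcing a root $\pm 1$ and hence $d=1$ by irreducibility). For the even-degree case you take a genuinely different route to the key point: the paper constructs one admissible enumeration by hand, listing the roots as $\lambda_1,\dots,\lambda_{d/2},\lambda_1^{-1},\dots,\lambda_{d/2}^{-1}$, relabelling so that $\lambda_{i+1}=\lambda_i^{\sigma}$, and then excluding $\lambda_{d/2}^{\sigma}=\lambda_j^{-1}$ for $j\geq 2$ by an orbit-counting contradiction, after which it dismisses the enumeration count as clear; you instead first prove the structural identity that inversion agrees with $\sigma^{d/2}$ on the roots, using that a permutation commuting with the simply transitive action of $\langle\sigma\rangle$ on the $d$ roots must itself be a power of $\sigma$, with $k=0$ excluded by irreducibility. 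From that identity the existence of an admissible enumeration, the count of exactly $d$ of them (one for each choice of first root, since the chain relations determine the rest), and the cyclic-shift structure all follow at once, because every admissible enumeration is simply $(\mu,\sigma\mu,\dots,\sigma^{d-1}\mu)$ read off from its first entry. Your version is more structural, makes explicit the part the paper leaves as "clear," and isolates the reusable fact that every root $\lambda$ of an even-degree $f\in\mathsf{SR}$ satisfies $\lambda^{q^{d/2}+1}=1$; the paper's version is more elementary in that it avoids the centralizer observation, but both arguments establish the same intermediate relation and are of comparable length.
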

\begin{proof}
	The map $x \mapsto x^{-1}$ induces an involution on the set of all $d$ distinct roots of $f $. If $d$ is odd, this involution has a fixed point, which means $1$ or $-1$ is a root of $f$. 
	Hence $f = \lambda \pm 1$.
	
	We assume $d$ is even. We
	first prove the existence of one admissible enumeration. The $d$ distinct roots of $f$ are of the form
	$\lambda_1, \cdots, \lambda_{d/2}, \lambda_1 ^{-1},\cdots, \lambda_{d/2}^{-1}.$ Since they form precisely one $\sigma$-orbit, we may reorder the $\lambda_i$'s or switch the roles of $\lambda_i$ and $\lambda_i ^{-1}$, to arrange that 
	$
	\lambda_2 = \lambda_1 ^{\sigma}, \cdots, \lambda _{d/2} = \lambda_{d/2-1} ^{\sigma}.
	$
	We claim that we must then have $\lambda_{d/2} ^{\s} = \lambda_1 ^{-1}$. In fact, since the $d$ distinct roots form precisely one $\sigma$-orbit, we have $\lambda_{d/2} ^{\s} = \lambda_j ^{-1}$ for a unique $1 \leq j \leq d/2$. If $j \geq 2$, then 
	$$\lambda _{{\frac{d}{2}}}, \lambda_j^{-1}, \lambda_{j+1} ^{-1}, \cdots, \lambda_{{\frac{d}{2}}} ^{-1} , \lambda_j,\lambda_{j+1} ,\cdots, \lambda_{{\frac{d}{2}}-1}$$ already form one $\s$-orbit, which does not contain $\lambda_1$, a contradiction. Thus we have shown the existence of an admissible enumeration. The rest of the lemma is clear.
\end{proof}

\begin{definition}\label{defn:adm tuple} Let $d \geq 2$ be an even integer. Given a tuple 
	$\Lambda = (\lambda_1,\cdots, \lambda_{{\frac{d}{2}}}) \in (k^{\times})^{\oplus {\frac{d}{2}}},$ we define $$\Lambda^{-1}: = (\lambda_1\i,\cdots, \lambda_{{\frac{d}{2}}} \i),\quad \bar \Lambda := (\lambda_1,\cdots, \lambda_{{\frac{d}{2}}-1} ,\lambda_{{\frac{d}{2}}} \i),  \quad \Lambda[1] := (\lambda_{{\frac{d}{2}}}, \lambda_1,\cdots, \lambda_{{\frac{d}{2}}-1}).$$ By induction we also define $\Lambda[j]$ for all $j\in \ZZ$. Let $\Lambda$ be as above and let $f$ be an element of $\mathsf {SR}$ of degree $d$. We say that $\Lambda$ is \emph{admissible with respect to $f$}, if $(\Lambda, \Lambda ^{-1})$ is an admissible enumeration of the roots of $f$ in the sense of Definition \ref{defn:even adm}.
\end{definition}	

\begin{definition}\label{def:odd adm}
	Let $f \in \mathsf {SR}_2$ of odd degree $d$. By an \emph{admissible enumeration} of the roots of $f$, we mean an enumeration $\lambda_1 ,\cdots, \lambda_{d}$ of the $d$ distinct roots of $f$ such that $$
	\lambda_{1} ^{\s^2} = \lambda_{2}, \cdots, \lambda_{d-1}^{\s^2} = \lambda _d, \lambda _d^{\s^2} = \lambda_1.$$ 
\end{definition}
\begin{lemma}\label{lem:odd deg}
	Let $f \in \mathsf {SR}_2$ be of odd degree $d$. \begin{enumerate}
		\item There are precisely $d $ distinct admissible enumerations of the roots, all obtained from a given one by powers of a cyclic permutation of order $d$.
		\item Assume $d\geq 3$. Let $\lambda_1,\cdots, \lambda_d $ be an admissible enumeration of the roots of $f$. For any integer $j$ we define $\lambda_j$ to be $\lambda_{j'}$, for $1\leq j ' \leq d$ such that $j \equiv j' \mod d$. Then for all $j \in \ZZ $ we have 
		\begin{align}\label{eq:relation for all j}
		(\lambda_j\i)^{\s}=\lambda_{j+ \frac{d+1}{2}}.
		\end{align}
	\end{enumerate}
\end{lemma}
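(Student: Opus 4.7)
The plan is to exploit two basic facts: (i) for an irreducible polynomial in $\BF_{q^2}[\lambda]$ of degree $d$, the Frobenius $\sigma^2$ acts as a single $d$-cycle on the root set; and (ii) the reciprocation map $x \mapsto (x^{-1})^{\sigma}$ is itself a power of the absolute Frobenius $\sigma$ of $k$, and hence commutes with $\sigma^2$.

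For part (1), irreducibility of $f$ in $\BF_{q^2}[\lambda]$ means its $d$ distinct roots in $k^{\times}$ form a single $\mathrm{Gal}(k/\BF_{q^2}) = \langle \sigma^2 \rangle$-orbit of size $d$. Thus $\sigma^2$ acts as one $d$-cycle on the roots, and for any choice of initial root $\lambda_1$ the recursion $\lambda_{i+1} = \lambda_i^{\sigma^2}$ produces exactly one admissible enumeration in the sense of Definition \ref{def:odd adm}. This yields $d$ admissible enumerations, all obtained from any fixed one by the cyclic permutation of order $d$ induced by $\sigma^2$.

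For part (2), introduce $\tau \colon k^{\times} \to k^{\times}$ defined by $\tau(x) = x^{-q} = (x^{-1})^\sigma$. Since $f$ is self-reciprocal, $\tau$ permutes the roots of $f$ (this is exactly the description of the roots of $f^*$ recorded after the definition of reciprocal polynomials). A direct computation gives $\tau^2(x) = x^{q^2} = \sigma^2(x)$, so on the root set one has $\tau^2 = \sigma^2$. Now $\tau$ and $\sigma^2$ are both integral powers of the absolute Frobenius $\sigma$ of $k$, so they commute as self-maps of $k^\times$, and in particular as permutations of the roots. By part (1), $\sigma^2$ acts on the roots as a regular cyclic action of order $d$; since this action is abelian and regular, its centralizer inside the symmetric group on the roots is exactly $\langle \sigma^2 \rangle$. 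Hence $\tau \in \langle \sigma^2 \rangle$ as a permutation, so there is a unique $a \in \ZZ/d\ZZ$ with $\tau(\lambda_j) = \lambda_{j+a}$ for all $j$. The identity $\tau^2 = \sigma^2$ forces $2a \equiv 1 \pmod d$, and since $d$ is odd the unique solution is $a \equiv (d+1)/2 \pmod d$. Substituting back gives $(\lambda_j^{-1})^\sigma = \tau(\lambda_j) = \lambda_{j + (d+1)/2}$.

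The argument is essentially a computation in the cyclic group $\ZZ/d\ZZ$, so there is no serious obstacle. The only mild subtlety is remembering that self-reciprocity over $\BF_{q^2}$ (as opposed to $\BF_q$) involves the Frobenius twist built into the definition of $f^*$ in \S\ref{subsec:rec}: the correct root-level involution is $x \mapsto x^{-q}$ rather than $x \mapsto x^{-1}$. Once this is accounted for, the relation $\tau^2 = \sigma^2$ and the oddness of $d$ do all the work.
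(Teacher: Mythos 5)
Your proof is correct, and its engine is the same as the paper's: the identity that the root-level reciprocation map $\tau(x)=x^{-q}$ squares to $\s^2$, combined with the congruence $2a\equiv 1 \pmod d$ which has the unique solution $(d+1)/2$ because $d$ is odd. The packaging differs slightly: the paper reduces to $j=1$ via $\lambda_j=\s^{2(j-1)}(\lambda_1)$ and computes $\lambda_1^{\s^2}=\lambda_{2l-1}$ directly, whereas you observe that $\tau$ commutes with $\s^2$ and lies in the centralizer of the regular cyclic action of $\langle\s^2\rangle$ on the roots, hence is itself a power of that cycle; this gives the shift uniformly in $j$ without the reduction step, at the cost of invoking the (standard) fact that the centralizer of a $d$-cycle in the symmetric group on $d$ letters is the cyclic group it generates. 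One small slip: $\tau$ is not an integral power of the absolute Frobenius $\s$ (it is $\s$ composed with inversion), so your stated reason for commutativity is off; but the conclusion is still immediate, since $\tau$ and $\s^2$ are both exponentiation maps $x\mapsto x^m$ on $k^\times$ and any two such maps commute. With that justification repaired, the argument is complete and matches the paper's conclusion.
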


\begin{proof} Part (1) follows immediately from the fact that the $d$ distinct roots form precisely one $\sigma^2$-orbit. We prove part (2). Since for all $j$ we have $\lambda_j = \s ^{2(j-1)} (\lambda_1 )$, it suffices to prove (\ref{eq:relation for all j}) for $j=1$. Since the set of the roots is closed under the map $x\mapsto (x\i)^{\s}$, we have $(\lambda_1\i)^{\s} = \lambda_l$ for some $1\leq l \leq d$. We get $$ \lambda _1^{\s^2} = (((\lambda_1 \i) ^{\s} )\i ) ^{\s}= (\lambda_l \i)^\s = \s ^{2(l-1)}  [(\lambda_1 \i)^\s ]=  \s ^{2(l-1)}  (\lambda_l )= \lambda _{l+(l-1)}.$$ On the other hand $\lambda _1 ^{\s^2} = \lambda_2$, so $2l - 1 \equiv 2 \mod d .$ Since $1\leq l \leq d$ and $d \geq 3$ is odd, the only solution of this congruence is $ l = 	(d+3)/2,$ as desired. \end{proof}
\subsection{Eigenvalues $\pm 1$} \label{subsec:GL reg}
Fix a non-degenerate quadratic space $(V,[\cdot, \cdot])$ over $k$. We would like to control the multiplicities of the eigenvalues $\pm 1$, for elements $g \in \mathrm O (V) \cap \GL(V) ^{\mathrm{reg}}$. For $g \in \GL(V)$ and $\lambda \in k$, we write $V(g,\lambda)$ for the generalized eigenspace of $g$ belonging to $\lambda$, i.e., $V(g,\lambda) = \ker (g-\lambda)^{\dim V}$.
\begin{proposition}\label{prop:dealing with lambda-1} Let $g \in \mathrm{O}(V) \cap \GL(V) ^{\mathrm{reg}}$. Let $j =1$ or $-1$. Then $\dim V(g, j)$ is either zero or odd.
\end{proposition}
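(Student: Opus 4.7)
The plan is to exploit the regularity hypothesis to reduce the claim about $\dim V(g,j)$ to a question about a single Jordan block, and then use symmetry of the bilinear form to force odd size. By Proposition \ref{prop:GL reg}, every eigenspace of $g$ is one-dimensional, so on each generalized eigenspace $V(g,\lambda)$ the operator $g$ acts as a single Jordan block with eigenvalue $\lambda$. In particular, setting $d := \dim V(g,j)$ and $N := (g-j)|_{V(g,j)}$, we have $N^{d-1} \neq 0 = N^d$, and there is a cyclic vector $e \in V(g,j)$ such that $\{N^i e\}_{0\leq i \leq d-1}$ is a basis.

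First I would show that distinct generalized eigenspaces $V(g,\lambda)$ and $V(g,\mu)$ with $\lambda\mu \neq 1$ are $[\cdot,\cdot]$-orthogonal. Since $g$ preserves the form, the pairing on $V(g,\lambda) \otimes V(g,\mu)$ is killed by $(g\otimes g)-1$; but on this tensor product $g\otimes g$ has only the generalized eigenvalue $\lambda\mu \neq 1$, so $(g\otimes g)-1$ is invertible, forcing the pairing to vanish. Hence $V(g,j)^{\perp} = \bigoplus_{\mu \neq j^{-1}} V(g,\mu) = \bigoplus_{\mu \neq j} V(g,\mu)$ using $j^{-1} = j$, so the restriction of $[\cdot,\cdot]$ to $V(g,j)$ is a non-degenerate symmetric bilinear form.

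Now assume $d \geq 1$; the main step is to derive a contradiction when $d$ is even. From the orthogonality relation $g^{\top} = g^{-1}$ (where $\top$ denotes adjoint with respect to $[\cdot,\cdot]$) one computes $N^{\top} = -j g^{-1} N$, and iterating gives
\[
[N^i x, y] = (-j)^i [x, g^{-i} N^i y] \qquad \text{for all } i \geq 0.
\]
Taking $x = y = e$ and $i = d-1$, and using $g^{-(d-1)} N^{d-1} e = j^{-(d-1)} N^{d-1} e$ (higher powers of $N$ vanish), I obtain $[N^{d-1} e, e] = (-1)^{d-1}[e, N^{d-1} e]$. Combined with the symmetry $[N^{d-1}e,e]=[e,N^{d-1}e]$ and the assumption $d$ even, this forces $[e, N^{d-1} e] = 0$ in characteristic $\neq 2$.

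Finally, the same identity yields $[N^{d-1} e, N^k e] = (-j)^{d-1}[e, g^{-(d-1)} N^{d-1+k} e] = 0$ for every $k \geq 1$ since $N^{d-1+k} = 0$. Together with $[N^{d-1}e,e]=0$ this shows $N^{d-1} e$ lies in the radical of $[\cdot,\cdot]|_{V(g,j)}$, contradicting the non-degeneracy established above. The only potentially delicate point is the orthogonality of distinct generalized eigenspaces, which is handled cleanly by the $g\otimes g$ argument; everything else is a direct manipulation of the cyclic basis.
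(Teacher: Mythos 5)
Your proof is correct, and it takes a genuinely different route from the paper's. Both arguments begin the same way: orthogonality of distinct generalized eigenspaces reduces everything to $V(g,j)$ with its non-degenerate restricted form, and regularity (via Proposition \ref{prop:GL reg}) forces $g|_{V(g,j)}$ to be a single Jordan block. From there the paper argues group-theoretically: if $d=\dim V(g,j)$ were even then $\det g|_{V(g,j)}=1$, so $g|_{V(g,j)}\in \SO(V(g,j))$ lies in a Borel subgroup and hence stabilizes a maximal totally isotropic subspace $M$; writing a hyperbolic basis adapted to $M$ and computing $g f_n$ shows that $(g-j)$ has nullity at least $2$, contradicting regularity. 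You instead stay entirely inside linear algebra: using the adjoint relation $g^{\top}=g^{-1}$ and the cyclic basis $\{N^i e\}$, you show that when $d$ is even the top vector $N^{d-1}e$ pairs to zero with every basis vector (the pairing with $e$ dies by the sign computation $[N^{d-1}e,e]=(-1)^{d-1}[e,N^{d-1}e]$ together with symmetry and characteristic $\neq 2$; the others die because $N^{d-1+k}=0$), so it lies in the radical, contradicting non-degeneracy rather than regularity. Your approach avoids the Borel-subgroup/isotropic-subspace input and is self-contained at the level of bilinear forms, while the paper's argument leans on the ambient group structure that it has set up anyway; the two contradictions (degenerate form versus too-large kernel of $g-j$) are dual faces of the same phenomenon. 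The steps you flag as delicate are fine: the $(g\otimes g)-1$ argument for orthogonality is valid (and gives the stated equality of $V(g,j)^{\perp}$ after a dimension count), and $g^{-1}$ commutes with $N$ since $N$ is a polynomial in $g$, so the iterated identity $[N^i x,y]=(-j)^i[x,g^{-i}N^i y]$ holds as claimed.
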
 
\begin{proof} Firstly, it is easy to see that $V(g,j)$ is orthogonal to $V(g, \lambda)$ for any $\lambda \in k - \set{j}$. In particular, the quadratic form restricted to $V(g, j)$ is non-degenerate, and we obtain a quadratic space $(V(g,j), [\cdot,\cdot])$. By Proposition \ref{prop:GL reg}, $g|_{V(g,j)}$ is in $\GL(V(g,j)) ^{\mathrm{reg}}$. Thus we may and shall assume that $V= V(g, j)$.  
	
	Assume that $\dim V = \dim V(g,j) = 2n$, with $n \geq 1$, and we are to deduce a contradiction. Under this assumption we have $g\in \SO(V)$ (since $\det g = j^{2n} =1$). In particular $g$ lies in a Borel subgroup of $\SO(V)$, and so $g$ stabilizes a maximal totally isotropic subspace $M \subset V$. Let $N$ be a maximal totally isotropic subspace of $V$ such that $V= M \oplus N$. Since $g\in \GL(V) ^{\mathrm{reg}}$, the Jordan canonical form of $g|_M \in \GL(M)$ must be one Jordan block of eigenvalue $j$ (see Proposition \ref{prop:GL reg}). We thus find a $k$-basis $e_1,\cdots, e_n$ of $M$, such that $(g-j)$ sends each $e_{\alpha}$ to $e_{\alpha -1}$ (with $e_0 : = 0$). Let $f_1,\cdots, f_n$ be the basis of $N$ satisfying $[e_{\alpha}, f_\beta] = \delta _{\alpha,\beta}$. Using $g \in \SO(V)$ it is easy to see that $$gf_n = jf_n + \sum _{\alpha=1}^n \eta_{\alpha} e_{\alpha}$$ for some $\eta_{\alpha}\in k$. Then we have 
	$$0 = [ f_n, f_n] = [gf_n, gf_n] = 2j \eta _n. $$ Hence $\eta _n = 0 $. It follows that $(g-j)$ maps the $k$-span of $e_1,\cdots, e_n , f_n$ into the $k$-span of $e_1,\cdots, e_{n-1}$. Hence the nullity of $(g-j)$ is at least $2$, a contradiction (see Proposition \ref{prop:GL reg}).
\end{proof}
\subsection{The non-split even special orthogonal group}\label{subsec:even ortho} In this subsection we consider case (\ref{item:even ortho}) in \S \ref{sec:basic loci}. 

We fix a non-degenerate non-split $2n$-dimensional quadratic space $(\mathbb V,[\cdot, \cdot])$ over $\BF_q$, with $n\geq 1$ (the case $n=0$ being trivial). Let $\mathbb G = \SO (\mathbb V,[\cdot,\cdot])$. Let $V: = \mathbb V\otimes _{\BF_q} k$. By the classification of quadratic forms over $\BF_q$ (\cite[\S 1.3]{kitaoka}, also cf.~\cite[\S 15.3]{DigneMichel}) there exists a $k$-basis $\set{e_1,\cdots, e_n ,f_1,\cdots, f_n}$ of $V$, satisfying 
\begin{align*} &
[e_{\alpha} ,e_{\beta}] = [f_{\alpha}, f_{\beta}] = 0 ,\quad [e_{\alpha}, f_{\beta}] = \delta _{\alpha, \beta}, \quad \forall ~ 1\leq \alpha,\beta \leq n; \\ &
e_{\alpha} ^{\s} = e_{\alpha}, \quad f_{\alpha } ^{\s} = f_{\alpha}, \quad \forall ~ 1\leq \alpha \leq n-1; \\ & 
e_n ^{\s} = f_n, \quad f_n ^{\s} =e_n.
\end{align*}
For each $1\leq i \leq n$, we define $$V_i: =  \mathrm{span}_k(e_i , e_{i+1}, \cdots, e_{n}, f_i, f_{i+1},\cdots, f_n) \subset V,\qquad W_i : = \mathrm{span}_k ({e_1,\cdots, e_{i}}) \subset V .$$ For each $1\leq i \leq n-1$, we have $W_i= W_i^\s$, and we write $\mathbb W_i$ for the $\BF_q$-form of $W_i$. For $1\leq i \leq n$, we have $V_i = V_i ^{\s}$, and we write $\mathbb V_i$ for the $\BF_q$-form of $V_i$.

Let $G = \mathbb G_k$. Let $B \subset G$ be the common stabilizer of either of the following two flags in $V$:
$$W_1 \subset W_2\subset \cdots \subset W_{n-1}  \subset W_{n},$$
$$ W_1\subset W_2\subset \cdots \subset W_{n-1} \subset W_n ^{\s}.$$
Then $B$ is a $\s$-stable Borel subgroup of $G$.
Let $T$ be the intersection of $G$ with the diagonal torus in $\GL(V)$ under the basis $\set{e_1,\cdots, e_n, f_1,\cdots, f_n}$. Then $T$ is the maximal torus of $G$ contained in $B$. 

We number the simple roots of $(G,B,T)$ according to Bourbaki \cite{bourbaki}. We consider the $\s$-unbranched datum $(J=\BS-\{s_{n-1}\}, \mathscr L = (s_1, \cdots, s_{n-1}) ) $. Following the notation of \S \ref{subsec:setting} and \S \ref{subsec:parabolic induction}, we have $i _{\max} = n$, and for $1\leq i \leq n$ we have
$$\mathbb P_i = \mathrm{Stab}_{\mathbb G} ( \mathbb W_{i-1}) , \qquad \mathbb L_i = \mathbb L_i^{\natural} = \GL(\mathbb W_{i-1}) \times \SO (\mathbb V_i), $$
$$ \mathbb G_i = \SO(\mathbb V_i) = \SO _{2(n+1-i)} ~ \text{(non-split)}, \qquad \mathbb H_i = \GL(\mathbb W_{i-1}) = \GL _{i-1}.$$
Here by convention $\mathbb W_0 =0$ and $\GL_0 = \{1\}$. As in \S \ref{subsec:parabolic induction}, we have natural projections $\pi_i : \mathbb P_i \to \mathbb G_i$ and $\pi_i': \mathbb P_i \to \mathbb H_i$.

For any $h\in G_i(k)$, we denote by $ f_h \in k [\lambda]$ the characteristic polynomial of $h$ acting on $V_i$, which has degree $2(n+1-i)$. Thus if $h \in G_i(\BF_q)$, then $f_h$ is self-reciprocal in $\BF_q [\lambda]$. Similarly, for any $h \in H_i(k)$, we denote by $f_h (\lambda) \in k[\lambda]$ the characteristic polynomial of $h$ acting on $W_i$, which has degree $i-1$.

We fix $1\leq i \leq n$. Write $n' $ for $n+1 -i$. Thus $G_i = \SO_{2n'}$, with $n' \geq 1$. 
Let $B_i'$ (resp.~$T_i'$) be the intersection of $G_i$ with the upper triangular subgroup (resp.~diagonal subgroup) of $\GL(V_i)$, under the $k$-basis $\set{e_i,\cdots, e_n, f_n,\cdots, f_i}$ of $V_i$. Then $B_i'$ is a $\s$-stable Borel subgroup of $G_i$, and $T_i'$ is a $\s$-stable maximal torus of $G_i$ contained in $B_i'$. Thus $T_i'$ is a $\s$-stable maximal torus of type $1\in W_i$. For any $(\lambda_1,\cdots, \lambda_{n'}) \in (k^{\times})^{\oplus n'}$, let $\gamma'(\lambda_1,\cdots, \lambda_{n'})$ be the diagonal matrix $\diag(\lambda_1,\cdots, \lambda_{n'}, \lambda_{n'}^{-1},\cdots, \lambda_1^{-1})$ in $\GL(V_i)$ under the same basis. Then $\gamma'$ is an isomorphism $\GG_{m,k}^{n'} \isom T_i'$ (defined over $k$). The Weyl group $W_i$ can be identified with $(\set{\pm 1}^{\times n'})' \rtimes S_{n'}$, where $(\set{\pm 1}^{\times n'})'$ denotes the kernel of \begin{align*}
\set{\pm 1}^{\times n'} & \To \set{\pm 1} \\ (u_{\alpha})_{\alpha} &\longmapsto \prod_{\alpha} u_{\alpha}.
\end{align*} For $1\leq \alpha \leq n'$, the non-trivial element in the $\alpha$-th copy of $\set{\pm 1}$ sends $\gamma'(\lambda_1,\cdots, \lambda_{n'})$ to $$\gamma'(\lambda_1,\cdots, \lambda_{\alpha-1}, \lambda_{\alpha}^{-1}, \lambda_{\alpha+1}, \cdots, \lambda_{n'}). $$ For $\rho \in S_{n'}$, we have $\rho (\gamma'(\lambda_1,\cdots, \lambda_{n'})) = \gamma'(\lambda_{\rho^{-1}(1)},\cdots, \lambda_{\rho^{-1}(n')})$. We easily compute that $w_i$ acts on $T_i'$ in the following way: 
$$w_i:  \gamma'(\lambda_1,\cdots, \lambda_{n'}) \longmapsto \gamma' (\lambda_{n'},\lambda_1,\cdots, \lambda_{n'-1}).$$ 
Also, $\s$ acts on $T_i'$ in the following way:
$$\s: \gamma'(\lambda_1,\cdots, \lambda_{n'}) \longmapsto \gamma'(\lambda_1^{\s},\cdots,\lambda_{n'-1}^{\s}, (\lambda_{n'}^{\s})^{-1}). $$
Remember that $T_i$ is by definition a $\s$-stable maximal torus of $G_i$ of type $w_i$. From the above discussion, we see that on $T_i$ we have coordinates 
$$ (k^{\times})^{\oplus n'} \isom T_i ,\qquad 	 
(\lambda_1,\cdots, \lambda_{n'}) \mapsto \gamma (\lambda_1,\cdots, \lambda_{n'}), $$  such that the eigenvalues (with multiplicities) of $\gamma (\lambda_1,\cdots, \lambda _{n'})$ acting on $V_i\cong k^{2n'}$ are $$\lambda_1,\cdots ,\lambda_{n'}, \lambda_1 ^{-1},\cdots, \lambda_{n'} ^{-1},$$
and such that 
\begin{align}\label{eq:Galois action}
\gamma(\lambda_1,\cdots, \lambda_{n'})^{\sigma} = \gamma ((\lambda_{n'}^{-1})^{\s}, \lambda_1 ^{\s}, \lambda_2 ^{\s}, \cdots, \lambda_{n'-1} ^{\s}).
\end{align}
Moreover, the action of $W_i \cong (\set{\pm 1}^{\times n'})' \rtimes S_{n'}$ on $T_i$ (which is no longer defined over $\BF_q$) is described in terms of these coordinates similarly as before: The non-trivial element in the $\alpha$-th copy of $\set{\pm 1}$ sends $\gamma(\lambda_1,\cdots, \lambda_{n'})$ to $\gamma(\lambda_1,\cdots, \lambda_{\alpha}^{-1},\cdots, \lambda_{n'})$. For $\rho \in S_{n'}$, we have $\rho (\gamma(\lambda_1,\cdots, \lambda_{n'})) = \gamma(\lambda_{\rho^{-1}(1)},\cdots, \lambda_{\rho^{-1}(n')})$.	

\begin{theorem}\label{thm:about Ti}
	We have the following statements about $T_i (\BF_q)$.
	\begin{enumerate}
		\item If $\gamma \in T_i(\BF_q)$, then  $f_{\gamma} = Q^m$ for some $Q \in \mathsf {SR}$, and some positive integer $m$. Moreover, either $Q(\lambda) = \lambda \pm 1$, or $m$ is odd. 
		\item Let $Q \in \mathsf{SR}$. Assume $ m$ is an odd integer such that $m \deg Q = 2n'$. (In particular $Q(\lambda) \neq \lambda \pm 1$). Then there exists $\gamma \in T_i (\BF_q)$ with $f_{\gamma}= Q ^m$. 
		\item Let $Q$ and $m$ be as in part (2). Let $\gamma \in G_i(k)$ be a semi-simple element such that $f_{\gamma} = Q^m$. Then $\gamma$ is $G_i(k)$-conjugate to an element of $T_i (\BF_q)$.
		\item For any $\gamma \in T_i(\BF_q)$, the centralizer $G_{i,\gamma}$ is connected. 
		\item Let $\gamma \in T_i (\BF_q)$. Write $f_{\gamma} = Q^m$ as in part (1). Assume $Q(\lambda) \neq \lambda \pm 1$. Then $\TT(w_i,\gamma) = (\deg Q)/2$. Here $\TT(w_i,\gamma)$ is defined in Definition \ref{defn:TT}.
	\end{enumerate}
\end{theorem}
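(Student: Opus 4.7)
My plan is to parametrize $T_i(\BF_q)$ explicitly via its action on $V_i$. Since $G_i \cong \SO_{2n'}$ is non-split and $w_i = s_{n-1}s_{n-2}\cdots s_i$ is a $\sigma$-twisted Coxeter element in $W_i$, I fix a $\sigma$-stable maximal torus $T \subset G_i$ in a $\sigma$-stable Borel, parametrized as $T(k) \cong (k^{\times})^{n'}$ via the eigenvalues $(t_1, \ldots, t_{n'})$ on the basis $(e_i, \ldots, e_n)$ of $V_i$. The non-split $\BF_q$-structure yields $\sigma(t_1,\ldots,t_{n'}) = (t_1^q, \ldots, t_{n'-1}^q, t_{n'}^{-q})$ because $e_n^{\sigma} = f_n$, and since conjugation by $w_i$ cyclically left-shifts the coordinates, a direct computation gives
\begin{equation*}
F_{w_i}(t_1, \ldots, t_{n'}) \;=\; (t_2^q, t_3^q, \ldots, t_{n'-1}^q, t_{n'}^{-q}, t_1^q).
\end{equation*}
Solving $F_{w_i}(t) = t$ yields a bijection
\begin{equation*}
T_i(\BF_q) \;\xrightarrow{\sim}\; \set{\lambda \in \BF_{q^{2n'}}^{\times} : \lambda^{q^{n'}+1} = 1}, \qquad \gamma_\lambda \longleftrightarrow \lambda,
\end{equation*}
under which the $V_i$-eigenvalues of $\gamma_\lambda$ are $\set{\lambda^{q^j} : 0 \leq j \leq 2n'-1}$ (with multiplicity).

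For Part (1), let $Q$ be the minimal polynomial of $\lambda$ over $\BF_q$ and $d = \deg Q$. Then $f_{\gamma_\lambda} = Q^m$ with $m = 2n'/d$, and $Q$ is self-reciprocal since its root set is closed under inversion via $\lambda^{-1} = \lambda^{q^{n'}}$. If $Q \neq \lambda \pm 1$ then $\lambda \neq \lambda^{-1}$, so $n' \not\equiv 0 \pmod d$; combined with $d \mid 2n'$, this forces $n' \equiv d/2 \pmod d$, whence $m$ is odd. For Part (2), given $Q \in \mathsf{SR}$ of even degree $d$ with $md = 2n'$ and $m$ odd, the admissible enumeration from Lemma \ref{lem:even deg} shows every root $\lambda$ of $Q$ satisfies $\lambda^{q^{d/2}+1} = 1$; since $m$ odd implies $n' \equiv d/2 \pmod d$, we also have $\lambda^{q^{n'}+1} = 1$, producing the desired $\gamma \in T_i(\BF_q)$ via the bijection.

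For Parts (3) and (4), since $Q \neq \lambda \pm 1$ the element $\gamma$ has no $\pm 1$ eigenvalue, so $V_i$ decomposes orthogonally as $\bigoplus V_i(\gamma,\mu) \oplus V_i(\gamma,\mu^{-1})$ over unordered pairs $\set{\mu,\mu^{-1}}$ of eigenvalues, with each summand non-degenerate of dimension $2m$. The centralizer $\mathrm{O}(V_i)_\gamma$ acts on each pair as $(h, (h^{*})^{-1})$ for $h \in \GL(V_i(\gamma,\mu))$, where $h^{*}$ is the adjoint for the pairing; taking determinants gives $1$ on each pair, so $\mathrm{O}(V_i)_\gamma \subset \SO(V_i)$ and $G_{i,\gamma} = \mathrm{O}(V_i)_\gamma \cong \GL_m^{d/2}$ is connected, proving Part (4). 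For Part (3), the same decomposition shows any two such semi-simple elements with characteristic polynomial $Q^m$ are $\mathrm{O}(V_i)(k)$-conjugate; the $\mathrm{O}(V_i)(k)$-class of $\gamma$ splits into exactly two $\SO(V_i)(k)$-classes (since the stabilizer lies entirely in $\SO$), and both classes meet $T_i(\BF_q)$ via $\gamma_\lambda$ and $\gamma_{\lambda^q}$, so $\gamma$ is $\SO(V_i)(k)$-conjugate to some element of $T_i(\BF_q)$.

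For Part (5), Lemma \ref{lem:compute TT} combined with Part (4) gives $\TT(w_i, \gamma) = |T_i(\BF_q) \cap {}^{G_i(\BF_q)}\gamma|$. Exactly $d$ elements of $T_i(\BF_q)$ have characteristic polynomial $Q^m$, namely the $\gamma_{\lambda^{q^k}}$ for $k = 0, \ldots, d-1$. By Lang--Steinberg on the connected $\mathrm{O}$-centralizer these form a single $\mathrm{O}(V_i)(\BF_q)$-orbit, which splits into two $G_i(\BF_q)$-orbits by the determinant argument above. The main obstacle is proving that each $G_i(\BF_q)$-orbit contains exactly $d/2$ of these elements --- equivalently, the parity criterion that $\gamma_{\lambda^{q^k}} \sim_{G_i(\BF_q)} \gamma_{\lambda^{q^{k'}}}$ if and only if $k \equiv k' \pmod 2$. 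I would establish this by realizing the Frobenius shift $\gamma_\lambda \mapsto \gamma_{\lambda^q}$ explicitly as conjugation by a signed permutation with odd sign parity, hence an element of the hyperoctahedral group $W(\mathrm{O}_{2n'})$ not lying in $W(\SO_{2n'})$; the double shift $\gamma_\lambda \mapsto \gamma_{\lambda^{q^2}}$ accumulates even parity and thus lies in $W(\SO_{2n'})$. This partitions the $d$ Frobenius conjugates into two $G_i(\BF_q)$-orbits of size $d/2$, yielding $\TT(w_i, \gamma) = d/2$.
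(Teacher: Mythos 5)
Your parametrization of $T_i(\BF_q)$ by $\lambda\in k^\times$ with $\lambda^{q^{n'}+1}=1$, eigenvalues $\lambda^{q^j}$, is equivalent to the coordinate description used in the paper, and your arguments for parts (1) and (2) are correct and in substance the same as the paper's. Your part (4) takes a genuinely different and cleaner route: instead of verifying Carter's criterion inside the Weyl group, you compute the centralizer directly as $\prod_{\set{\mu,\mu^{-1}}}\GL(V_i(\gamma,\mu))\cong\GL_m^{d/2}$ via $h\mapsto(h,(h^*)^{-1})$, which is connected and has determinant $1$, hence lies in $\SO(V_i)$. (Do note that part (4) is asserted for all $\gamma\in T_i(\BF_q)$; the omitted case $Q=\lambda\pm1$ forces $\gamma=\pm\id$ central, so it is trivial, but it should be said.)

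The genuine gap sits at the crux of parts (3) and (5): the claim that $\gamma_\lambda$ and $\gamma_{\lambda^q}$ lie in different $\SO(V_i)(k)$-classes, equivalently that the $d$ elements $\gamma_{\lambda^{q^k}}$ of $T_i(\BF_q)$ split into two $W_i$-orbits according to the parity of $k$. Your stated plan — realize the Frobenius shift by a signed permutation with an odd number of sign changes, ``hence'' not in the Weyl group of $\SO_{2n'}$ — does not prove non-conjugacy: conjugacy under $W_i$ (type $D_{n'}$) asks whether \emph{some} element of $W_i$ carries $\gamma_\lambda$ to $\gamma_{\lambda^q}$, and exhibiting one odd realization does not exclude an even one. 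The parity of a realizing signed permutation is well defined only modulo the stabilizer of $\gamma_\lambda$ in the full hyperoctahedral group, so you must in addition show that this stabilizer is contained in the type-$D$ subgroup. This does follow from your own part (4) — any element of $N_{\mathrm{O}(V_i)}(T_i)$ centralizing $\gamma_\lambda$ lies in $Z_{\mathrm{O}(V_i)}(\gamma_\lambda)\subset\SO(V_i)$; concretely, since no eigenvalue is $\pm1$, a stabilizing signed permutation can only permute equal coordinates or swap a coordinate equal to $\mu$ with one equal to $\mu^{-1}$ while inverting both, giving an even number of sign changes — but you never make this connection, and without it the parity criterion, hence both the claim in part (3) that both $\SO(k)$-classes meet $T_i(\BF_q)$ and the count $\TT(w_i,\gamma)=d/2$ in part (5), is unproven. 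Once this step is inserted, your argument closes and recovers the paper's count, which is obtained there instead by counting admissible enumerations and observing that exactly half of them are reachable by $W_i$.
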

\begin{proof} 
	
	\textbf{(1)} Write $\gamma = \gamma(\lambda_1,\cdots, \lambda_{n'})$. Since $\gamma^{\sigma} = \gamma$, it follows from (\ref{eq:Galois action}) that we have the following equality between two $2n'$-tuples in $k^{\times}$:
	\begin{align}\label{eq:condition for rational}
	(\lambda_1, \lambda_1 ^\s, \cdots, \lambda _1 ^{\s^{2n'-1}}) = (\lambda_1,\cdots, \lambda _{n'}, \lambda_1 ^{-1} ,\cdots, \lambda _{n'} ^{-1}).
	\end{align}
	We remark that (\ref{eq:condition for rational}) is valid even for $i = i_{\max}=n$. In fact, in that case $T_i = G_i$ is the kernel of the norm map $\Res_{\BF_{q^2} / \BF_q} \GG_m \to \GG_m$, and (\ref{eq:condition for rational}) reads $\lambda_1 ^{\sigma} = \lambda_1 ^{-1}$.
	
	Therefore all eigenvalues of $\gamma$ are in one $\s$-orbit. It follows that $f_{\gamma}$ has a unique monic irreducible factor $Q$. Since $f_{\gamma}$ is self-reciprocal, so is $Q$.
	
	Now assume $m$ is even. Then $d: =\deg Q$ divides $n'$. Since (\ref{eq:condition for rational}) holds and since there are precisely $d$ distinct eigenvalues of $\gamma$, we know that $\lambda_1$ is fixed by $\s^d$. Since $d$ divides $n'$, it follows that $\lambda_1$ is fixed by $\s^{n'}$. By (\ref{eq:condition for rational}) $\lambda _1^{\s ^{n'}} = \lambda _1 \i$. Hence $\lambda_1  = \lambda _1 \i$, and so $\lambda_ 1 = \pm 1$. It follows that $Q(\lambda) = \lambda \pm 1$. 
	
	\textbf{(2)} Let $d= \deg Q$. Then $d$ is even since $dm$ is even. 
	We fix a tuple $\Lambda \in (k^{\times})^{\oplus {\frac{d}{2}}}$ admissible with respect to $Q$, see Definition \ref{defn:adm tuple}. Then
	$$
	\gamma : = \gamma (\underbrace{\Lambda,\Lambda^{-1},\cdots, \Lambda, \Lambda^{-1}, \Lambda}_m)$$
	is an element of $T_i (\BF_{q})$ satisfying $f_{\gamma} = Q^m$.
	
	\textbf{(3)} Let $d = \deg Q$. We know $d$ is even. We assume without loss of generality that $\gamma \in T_i(k)$. Since $f_{\gamma} = Q ^m$, the $n'$ coordinates of $\gamma$
	must contain elements $\lambda_1,\cdots, \lambda_{{\frac{d}{2}}}$ such that all roots of $Q$ are given by $\lambda_1,\cdots, \lambda_{{\frac{d}{2}}}, \lambda_1 \i ,\cdots, \lambda_{{\frac{d}{2}}} \i.$ We temporarily assume $m >1$. By Lemma \ref{lem:even deg}, there exists an admissible tuple $\Lambda$ with respect to $Q(\lambda)$, obtained by permuting $\lambda_1,\cdots, \lambda_{d/2}$ and replacing some of them with their inverses. Up to replacing $\gamma$ by  $^x\gamma$ for some $x\in W_i$, we may arbitrarily permute the coordinates of $\gamma$, and we may replace an arbitrary even number of coordinates of $\gamma$ by their inverses. As $m > 1$, we may therefore arrange that either
	$$\gamma = \gamma (\underbrace{\Lambda, \Lambda^{-1} , \cdots, \Lambda ,\Lambda ^{-1}, \Lambda}_m) $$
	or
	$$\gamma = \gamma (\underbrace{\Lambda, \Lambda^{-1} , \cdots, \Lambda ,\Lambda ^{-1}}_{m-1}, \bar \Lambda).$$  In the first case we already have $\gamma \in T_i (\BF_q)$. Assume we are in the second case. 
	Since $m$ is odd, we may simultaneously replace each of the first $m-1$ appearances of $\Lambda$ or $\Lambda \i$ by its bar, i.e., $\gamma$ is $W_i$-conjugate to
	$$ \gamma (\bar \Lambda, \overline {\Lambda ^{-1}}, \cdots, \bar \Lambda, \overline {\Lambda ^{-1}}, \bar \Lambda) = \gamma (\bar \Lambda, \bar\Lambda \i ,\cdots, \bar \Lambda, \bar \Lambda \i, \bar \Lambda).$$ But the above element is $W_i$-conjugate to
	$$  \gamma (\bar \Lambda [1], \bar\Lambda \i [1],\cdots, \bar \Lambda [1], \bar \Lambda \i [1], \bar \Lambda [1]) = \gamma (\Omega, \Omega \i,\cdots, \Omega, \Omega \i,  \Omega), $$ where $\Omega : = \bar \Lambda [1]$. Note that $\Omega$ is admissible with respect to $Q$, and using this fact it is easy to check that the above element is in $T_i (\BF_q)$.
	
	Now we treat the case $m=1$. In this case $\gamma$ is $W_i$-conjugate to either $\gamma(\Lambda)$ or $\gamma (\bar \Lambda)$, for a tuple $\Lambda$ admissible with respect to $Q$. The element $\gamma (\Lambda)$ is already in $T_i (\BF_q)$. The element $\gamma (\bar \Lambda)$ is $W_i$-conjugate to $\gamma (\bar \Lambda [1])$, which is in $T_i(\BF_q)$ since $\bar \Lambda[1]$ is admissible with respect to $Q$.

	\textbf{(4)} We claim that any element $x\in W_i$ fixing $\gamma$ is a certain product of reflections associated to roots that send $\gamma$ to $1$. Once the claim is proved, it will follow that $G_{i,\gamma}$ is connected, see \cite[Theorem 3.5.3]{Carter}. We now prove the claim.
	
	For each $1\leq \alpha \leq n'$, we let $\epsilon_{\alpha} \in X^*(T_i)$ be the character on $T_i$ sending $\gamma(\lambda_1,\cdots, \lambda_{n'})$ to $\lambda_{\alpha}$. Then $\set{\epsilon_1,\cdots, \epsilon_{n'}}$ is a $\ZZ$-basis $X^*(T_i)$, and the roots in $X^*(T_i)$ are $\set{\pm \epsilon_{\alpha } \pm \epsilon_{\beta};\alpha \neq \beta}$. For each $x\in W_i$, define $$A(x): = \set{\alpha; 1\leq \alpha \leq n', x(\epsilon _{\alpha}) \notin \set{\pm \epsilon_{\alpha}}}. $$
	Now assume that $x$ fixes $\gamma$, and assume that $A(x) \neq \emptyset$. Take $\alpha \in A(x)$. Then $x(\epsilon _{\alpha}) = \pm \epsilon _{\beta}$ for some $\beta \neq \alpha$. If $x(\epsilon _{\alpha}) =  \epsilon _{\beta}$, then we left multiply $x$ by the reflection $\epsilon _{\alpha} \mapsto \epsilon _{\beta}, \epsilon _{\beta} \mapsto \epsilon _{\alpha}$. If $x(\epsilon _{\alpha}) =  -\epsilon _{\beta}$, then we left multiply $x$ by the reflection $\epsilon _{\alpha} \mapsto - \epsilon _{\beta}, \epsilon _{\beta} \mapsto - \epsilon _{\alpha}$. In either case, we have left multiplied $x$ by a reflection associated to a root (i.e.~$\epsilon_{\alpha} - \epsilon _{\beta}$ in the first case and $\epsilon_{\alpha} + \epsilon _{\beta}$ in the second case) which sends $\gamma$ to $1$, and the product is an element $y \in W_i$ which also fixes $\gamma$ and which satisfies $\#  A(y) < \# A(x)$.  
	In this way, we reduce to the case where $A(x) = \emptyset$. Now assume $A(x) = \emptyset$, and let $$B(x) = \set{\alpha ; 1 \leq \alpha \leq n' , x(\epsilon_{\alpha}) \neq \epsilon_{\alpha } }. $$ Then $x \in (\set{\pm 1} ^{\times n'})' \subset W_i$, and if we write $x = (x_1,\cdots,x_{n'}) \in \set{\pm 1}^{\times n'}$, then $B(x) = \set{\alpha; x_{\alpha} = -1}$. In particular, $\# B(x)$ is even. Since $x$ fixes $\gamma$, we know $\epsilon _{\alpha } (\gamma) = \pm 1$ for each $\alpha \in B(x)$. By part (1) we know that $\pm 1$ cannot simultaneously be eigenvalues of $\gamma$, so these $\epsilon_{\alpha }(\gamma)$ must all be $1$ or all be $-1$. Write $\# B(x)$ as $2l$, and enumerate the elements of $B(x)$ arbitrarily as $\set{\alpha_1,\cdots, \alpha_l ,\beta_1,\cdots, \beta_l}$. Then for each $1\leq j \leq l$, the roots $\epsilon_{\alpha_j} + \epsilon _{\beta_j}$ and $\epsilon_{\alpha_j} - \epsilon_{\beta_j}$ both send $\gamma$ to $1$. We easily see that $$x = \prod_{j =1}^l s_{\epsilon_{\alpha_j} + \epsilon _{\beta_j}} \cdot  s_{\epsilon_{\alpha_j} - \epsilon _{\beta_j}}, $$ where $s_{\epsilon_{\alpha_j} \pm \epsilon _{\beta_j}}$ denotes the reflection associated to the root $\epsilon_{\alpha_j} \pm \epsilon _{\beta_j}$. The claim is proved.

	\textbf{(5)} Let $d = \deg Q$. By part (1) we know that $m$ is odd and $d$ is even. Write 
	$\gamma = \gamma (\lambda_1, \cdots, \lambda _{n'}).$ Since (\ref{eq:condition for rational}) holds, we know that $\lambda_1,\cdots, \lambda_d $ are the $d$ distinct roots of $Q(\lambda)$, and that $\lambda_1^{\s^d} = \lambda_1$. As $m$ is odd, we write $m = 2t+1$. Using $n' = md/2 = td + \frac{d}{2} $ and using (\ref{eq:condition for rational}), we see that $$\lambda_1^{-1} = \lambda_{n'}^{\s} = \lambda_1^{\s^{n'}} = \lambda_1^{\s^{td + \frac{d}{2}}} = \lambda_1^{\s^{\frac{d}{2}}} = \lambda_{d/2}^{\s}. $$ It then follows from (\ref{eq:condition for rational}) that
	$\Lambda: = (\lambda_1, \cdots, \lambda_{d/2})$ is an admissible tuple with respect to $Q$, and that we have 
	\begin{align}\label{eq:writing gamma}
	\gamma = \gamma(\underbrace{\Lambda, \Lambda ^{-1},\cdots,\Lambda}_{m}).
	\end{align} (Here if $d=2n'$ and $m=1$, the last equality is understood as $\gamma = \gamma(\Lambda)$.)
	
	By part (4) and Lemma \ref{lem:compute TT}, we have $$ \TT(w_i,\gamma)  = \# \{ \gamma' \in T_i (\BF_q) ; \gamma ' = {}^x \gamma \text{ for some } x\in W_i \}. $$
	By the above argument, any such $\gamma'$ must be of the form
	$\gamma ' = \gamma (\Lambda', (\Lambda')\i,\cdots, \Lambda'),$ for a tuple $\Lambda'$ which is admissible with respect to $Q$. Let $N$ be the number of admissible tuples $\Lambda'$ with respect to $Q$, such that $\gamma (\Lambda', (\Lambda')\i,\cdots, \Lambda')$ equals ${}^x \gamma$ for some $x \in W_i$. To finish the proof, it remains to show that $N = d/2$.

	We now compute $N$. By Lemma \ref{lem:even deg}, there are precisely $d$ distinct admissible tuples with respect to $Q$, and they are of the form $\Lambda_1, \Lambda_2,\cdots, \Lambda_{d}$, with $\Lambda_1 = \Lambda$, and $\Lambda_{j} = (\bar \Lambda_{j-1})[1]$ for $2\leq j \leq d$. See Definition \ref{defn:adm tuple} for the notation. For $1\leq j \leq d$, we let $$\gamma_j : = \gamma(\underbrace{\Lambda_j, \Lambda_j^{-1} ,\cdots, \Lambda_j }_{m}). $$ (If $m=1$, then $\gamma_j : = \gamma(\Lambda_j)$.) Thus $N$ is equal to the cardinality of 
	$$\set{j ; 1\leq j \leq d, \gamma_j =  {}^x \gamma \text{ for some } x\in W_i }. $$ 
	
	If $j \geq 3$, then we have $\Lambda_j = (\bar \Lambda_{j-1})[1] = \Lambda_{j-2}[2]$. 
	It easily follows that the Weyl orbit of $\gamma_j$ depends only on the parity of $j$, for any $1\leq j \leq d$. We claim that $\gamma_2$ is not in the same Weyl orbit as $\gamma= \gamma_1$. Once the claim is proved, it follows that $N$ is equal to the number of odd integers $j$ with $1\leq j \leq d$, i.e., $N = d/2$. 
	
	To prove the claim, remember that $m$ is odd. Hence $\gamma_2$ is $W_i$-conjugate to 
	\begin{align}\label{eq:gamma_2}
	\gamma(\underbrace{\Lambda, \Lambda^{-1},\cdots, \Lambda, \Lambda^{-1}}_{m-1}, \bar \Lambda). 
	\end{align}   
	Comparing with (\ref{eq:writing gamma}), and using the fact $\lambda_1, \cdots, \lambda_{d/2}, \lambda_1^{-1},\cdots, \lambda_{d/2}^{-1}$ are all distinct, we easily see that the element (\ref{eq:gamma_2}) is not conjugate to $\gamma$ by the group $W_i \cong (\set{\pm 1}^{\times n'})' \rtimes S_{n'}$.
\end{proof}

\begin{lemma}\label{lem:counting flags} Let $g \in G(\BF_q) \cap \GL(V)^{\mathrm{reg}}$. For each $1\leq i \leq n$, let $\mathcal M_i^g$ be as in \S \ref{subsec:combining results}. We have a bijection 
	\begin{align*}
	\mathcal M_i ^g  & \isom \set{ U\in \BF_q [\lambda]^{\mathrm{monic}} ; \deg U = i-1, UU^* \text{ divides } f_g \text{ in }\BF_q [\lambda]} \\  
	r P_i(\BF_q) & \mapsto f_{\pi_i' (r\i g r)} .
	\end{align*}
	
\end{lemma}
\begin{proof}
	Let
	$(\mathcal M_i^g)'$ be the set of $g$-stable $(i-1)$-dimensional totally isotropic $\BF_q$-subspaces of $\mathbb V$. We know that all $(i-1)$-dimensional totally isotropic $\BF_q$-subspaces of $\mathbb V$ are in the same $G(\BF_q)$-orbit, because $i-1 < n$. \footnote{In contrast, even over the algebraically closed field $k$, there are two $G(k)$-orbits of $n$-dimensional totally isotropic $k$-subspaces of $V$.} Thus we have a bijection 
	\begin{align*}
	\mathcal M_i^g  & \isom  (\mathcal M_i^g)' \\ 
	r P_i(\BF_q)  & \mapsto r W_i.
	\end{align*}
	
	Now given $W \in (\mathcal M_i^g)'$ corresponding to $rP_i(\BF_q) \in \mathcal M_i ^g$, the characteristic polynomial $f_{g|_W}$ of $g|_W$ is equal to $f_{\pi_i'(r\i g r )}$. Hence it suffices to show that the map 
	\begin{align}\label{eq:to show bij even ortho}
	(\mathcal M_i ^g)' \to \set{ U\in \BF_q [\lambda] ^{\mathrm{monic}} ; \deg U = i-1,  UU^* \text{ divides } f_g \text{ in }\BF_q [\lambda]}
	\end{align} sending $W$ to $f_{g|_W}$ (which is obviously well-defined) is a bijection. 
	
	Given any element $U(\lambda)$ of the right hand side of (\ref{eq:to show bij even ortho}), we obtain the $\BF_q$-subspace $\ker U (g) \subset \mathbb V$, which is $g$-stable. Let $S:  = f_g / (UU^*)\in \BF_q[\lambda] .$ We now claim that $\ker U(g)$ has dimension $i-1$ and is totally isotropic. To check this it suffices to replace $\ker U(g)$ by its base change to $k$. Since $g \in \GL(V)^{\mathrm{reg}}$, we know that the Jordan canonical form of $g$ over $k$ has only one Jordan block for each eigenvalue, by Proposition \ref{prop:GL reg}. Analyzing each Jordan block one by one, we see that $(\ker U(g))_k$ is equal to $(SU^*) (g) (V)$, and has dimension $i-1$. To check that $(\ker U(g))_k$ is totally isotropic, let $v \in (\ker U(g))_k$. Let $w \in V$ such that $v = (SU^*) (g) w $. Then 
	\begin{align*}
	[v,v] & = [v, (SU^*) (g) w] = [ v, U^*(g) S(g)w ] = [ U^*(g^{-1}) v , S(g)w ] \\  & = [U(0)\i g^{1-i} U(g) v, S(g) w] =0,  
	\end{align*}
	where the last equality holds because $U(g) v =0$. The claim is proved.
	
	By the claim, $\ker U(g)$ is an element of $(\mathcal M_i^g)'$. It then follows from the Cayley--Hamilton Theorem that $U \mapsto \ker U(g)$ is the inverse map of (\ref{eq:to show bij even ortho}). Hence (\ref{eq:to show bij even ortho}) is a bijection as desired.
\end{proof}
\begin{theorem}\label{thm:RZO case} Let $g \in G(\BF_q) \cap \GL(V)^{\mathrm{reg}}$. We use the notations in Definition \ref{defn:MMM}. For each $Q \in \mathsf{SR}$, we simply write $m_Q$ for $m_Q(f_g)$. The following statements hold.
	\begin{enumerate}
		\item We have $m_{(\lambda +1)} =0$, and $m_{(\lambda -1)}$ is zero or odd. 
		\item If $\tr (g,J ,\mathscr L) \neq 0$, then there is a unique element $Q_0 \in \mathsf {SR}$ such that $m_{Q_0}$ is odd. In this case we also know that $Q_0 \neq \lambda \pm 1$. (In particular, by part (1)  we have $m_{(\lambda +1)}  = m_{(\lambda-1)} =0$ in this case.)
		\item Assume there is a unique element $Q_0 \in \mathsf {SR}$ such that $m_{Q_0}$ is odd. Assume $Q_0 \neq \lambda\pm 1$. Then 
		$$\tr (g, J,\mathscr L ) = \frac{\deg Q_0}{2}  \frac{m_{Q_0}+1}{2}\MMM(f_g).$$
	\end{enumerate}
\end{theorem}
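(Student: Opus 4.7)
The plan is to combine Theorem \ref{thm:char formula} with the structural results of Theorem \ref{thm:about Ti}, exploiting the orthogonal decomposition of $\mathbb V$ over $\BF_q$ indexed by the irreducible factors of $f_g$.

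For Part (1), Proposition \ref{prop:dealing with lambda-1} immediately gives that each of $m_{(\lambda-1)}$ and $m_{(\lambda+1)}$ is either zero or odd. To upgrade this to $m_{(\lambda+1)}=0$, I would use $\det g = 1$. Since the $g$-generalized eigenspaces for eigenvalues $\lambda, \mu$ with $\lambda\mu\neq 1$ are orthogonal, $\mathbb V$ splits orthogonally over $\BF_q$ into pieces $\mathbb V_Q$ indexed by $Q \in \mathsf{SR}$ and $\mathbb V_{\{Q,Q^*\}}$ indexed by $\{Q, Q^*\} \in \mathsf{NSR}$. Each piece except $\mathbb V_{(\lambda+1)}$ contributes $1$ to $\det g$: in the self-reciprocal cases with $Q \neq \lambda \pm 1$, the roots of $Q$ pair as $\lambda, \lambda^{-1}$; the $\mathbb V_{(\lambda-1)}$ piece is trivial; in the $\mathsf{NSR}$ case the roots of $Q$ and $Q^*$ are mutually inverse. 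The piece $\mathbb V_{(\lambda+1)}$ contributes $(-1)^{m_{(\lambda+1)}}$, so this exponent must be even, forcing $m_{(\lambda+1)} = 0$.

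For Part (2), a nonzero trace gives some $r \in \widetilde{\mathcal M}_i^{g,\gamma}$ by Theorem \ref{thm:char formula}. Analyzing $r^{-1}gr$ on the $\BF_q$-flag $\mathbb W_{i-1} \subset \mathbb W_{i-1}^\perp \subset \mathbb V$ (with $\mathbb W_{i-1}$ isotropic), one obtains the factorization $f_g = U \cdot f_\gamma \cdot U^*$, where $U = f_{\pi_i'(r^{-1}gr)}$ has degree $i-1$, the middle factor is the characteristic polynomial of the semisimple part of $\pi_i(r^{-1}gr)$ on $\mathbb W_{i-1}^\perp/\mathbb W_{i-1} \cong \mathbb V_i$, and the $U^*$ factor arises because the induced action on $\mathbb V/\mathbb W_{i-1}^\perp$ is contragredient to that on $\mathbb W_{i-1}$. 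Theorem \ref{thm:about Ti}(1) gives $f_\gamma = Q^m$ with $Q \in \mathsf{SR}$, and either $Q = \lambda \pm 1$ or $m$ odd. Part (1) excludes $Q = \lambda + 1$; if $Q = \lambda-1$, then $m = 2(n+1-i)$ is even, making $m_{(\lambda-1)}(f_g) = m + 2 m_{(\lambda-1)}(U)$ positive and even, again contradicting Part (1). Thus $Q \neq \lambda \pm 1$ and $m$ is odd, so $m_Q(f_g)$ is odd while $m_{Q'}(f_g)$ is even for every other $Q' \in \mathsf{SR}$; take $Q_0 := Q$.

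For Part (3), I would expand Theorem \ref{thm:char formula}. The analysis above shows that $(i, \gamma)$ contributes only when $f_\gamma = Q_0^m$ for an odd $m$ with $m \deg Q_0 = 2(n+1-i)$. Since $\deg Q_0$ is even, the admissible values of $m$ are exactly the odd integers from $1$ to $m_{Q_0}$, namely $(m_{Q_0}+1)/2$ of them, each determining a unique $i = n+1 - m \deg Q_0/2 \in \{1,\ldots,n\}$. Theorem \ref{thm:about Ti}(2) produces some $\gamma \in T_i(\BF_q)$ with $f_\gamma = Q_0^m$, and parts (3)--(4) together with Lang--Steinberg on the connected centralizer force any two such $\gamma$ to be $G_i(\BF_q)$-conjugate, yielding a unique contributing $\gamma_i \in \Gamma_i$. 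Lemma \ref{lem:counting flags} identifies $\mathcal M_i^{g, \gamma_i}$ with $\{U \in \BF_q[\lambda]^{\mathrm{monic}} : \deg U = i-1,\ UU^* = f_g/Q_0^m\}$, whose cardinality is $\MMM(f_g)$ by Lemma \ref{lem:MMM}; Theorem \ref{thm:about Ti}(5) gives $\TT(w_i, \gamma_i) = \deg Q_0/2$. Summing the $(m_{Q_0}+1)/2$ equal contributions yields the claimed formula. The main obstacle is setting up the factorization $f_g = UU^* f_\gamma$ cleanly and carrying out the conjugacy-reduction to a single $\gamma_i$ per stratum; once those are in hand, the combinatorics is dictated by Lemma \ref{lem:MMM}.
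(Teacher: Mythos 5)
Your parts (1) and (2) are correct and essentially the paper's own argument: Proposition \ref{prop:dealing with lambda-1} plus the determinant constraint for (1), and the factorization $f_g=f_{\pi_i(r^{-1}gr)}\cdot UU^*$ coming from the flag $\mathbb W_{i-1}\subset\mathbb W_{i-1}^{\perp}\subset\mathbb V$ together with Theorem \ref{thm:about Ti}(1) for (2). Your part (3) also follows the paper's route, but it contains a genuine gap at the conjugacy-reduction step. It is \emph{not} true that any two $\gamma,\gamma'\in T_i(\BF_q)$ with $f_{\gamma}=f_{\gamma'}=Q_0^{m_i}$ are $G_i(\BF_q)$-conjugate, and Theorem \ref{thm:about Ti}(3)--(4) plus Lang--Steinberg do not give this: those results only convert $G_i(k)$-conjugacy into $G_i(\BF_q)$-conjugacy, and in the non-split even special orthogonal group two semisimple elements with the same characteristic polynomial (and no eigenvalue $\pm1$) need not be $G_i(k)$-conjugate. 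Concretely, the $d=\deg Q_0$ elements of $T_i(\BF_q)$ with characteristic polynomial $Q_0^{m_i}$ fall into exactly two $G_i(\BF_q)$-classes, each meeting $T_i(\BF_q)$ in $d/2$ elements; this is precisely why $\TT(w_i,\gamma)=(\deg Q_0)/2$ in this case (Theorem \ref{thm:about Ti}(5)), as opposed to $\deg Q_0$ in the odd orthogonal, symplectic and unitary families. Hence there is in general no single ``contributing $\gamma_i$'', and Lemma \ref{lem:counting flags} does not identify $\mathcal M_i^{g,\gamma_i}$ with $\set{U;\,UU^*=f_g/Q_0^{m_i}}$; what it identifies with that set is the union $\bigsqcup_{\gamma}\mathcal M_i^{g,\gamma}$ over all (up to two) classes $\gamma\in\Gamma_i$ with $f_{\gamma}=Q_0^{m_i}$.

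The gap only affects the bookkeeping, not the final number: since every class with $f_{\gamma}=Q_0^{m_i}$ carries the same value $\TT(w_i,\gamma)=(\deg Q_0)/2$, summing over all of them still yields $\frac{\deg Q_0}{2}\,\MMM(f_g)$ for each $i\in\mathscr I$, and hence the stated formula. The paper's proof is structured so as to avoid the false uniqueness claim: for each $rP_i(\BF_q)\in\mathcal M_i^g$ with $f_{\pi_i(r^{-1}gr)}=Q_0^{m_i}$, it applies Theorem \ref{thm:about Ti}(3), then (4) together with Lang--Steinberg, to conclude that $(\pi_i(r^{-1}gr))_s$ is $G_i(\BF_q)$-conjugate to \emph{some} element of $T_i(\BF_q)$, so that $rP_i(\BF_q)\in\mathcal M_i^{g,\gamma}$ for a (trivially unique) $\gamma\in\Gamma_i$; this sets up the bijection (\ref{eq:bij}) at the level of triples $(i,\gamma,rP_i(\BF_q))$ without asserting that $\gamma$ is independent of $r$. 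You should reorganize your part (3) in the same way (or else prove which of the two classes actually occurs, which is unnecessary for the count).
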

\begin{proof} Part (1) follows from Proposition \ref{prop:dealing with lambda-1} and the fact that $m_{(\lambda +1)}$ must be even in order for $\det g =1$.
	
	By Proposition \ref{prop:implication}, we have $g \in G(\BF_q) \cap G^{\mathrm{reg}}$, and so we may apply Theorem \ref{thm:char formula} to compute $\tr (g, J ,\mathscr L)$ in the following.
	
	Firstly, assume $1 \leq i \leq n$ and $\mathcal M_i^{g,\gamma} \neq \emptyset$ for some $\gamma \in Z_{G_i}(\BF_q)$. Here $Z_{G_i}$ denotes the center of $G_i$. Take $rP_i (\BF_q) \in \mathcal M_i^{g,\gamma}$. Then $f_{\pi_i (r\i g r)}= (\lambda - j) ^{2(n+1-i)}$ for $j=1$ or $-1$, and it follows from Lemma \ref{lem:counting flags} that $$f_g(\lambda) = (\lambda -j) ^{2(n+1-i)} U(\lambda)U^*(\lambda)$$ for some $U(\lambda) \in \BF_q [\lambda]$. Then $m_{(\lambda -j)}$ must be positive even, a contradiction with part (1). Hence $
	\mathcal M_i^{g,\gamma}  = \emptyset$ for all $1 \leq i \leq n$ and all $\gamma \in Z_{G_i}(\BF_q).$
	
	We now prove part (2) of the theorem. Assume $\tr(g, J,\mathscr L) \neq 0$. Then there exist $1 \leq i \leq n$ and $\gamma \in \Gamma_i$ such that  $\mathcal M_i^{g,\gamma} \neq \emptyset$. By the previous paragraph, we know that $\gamma \notin Z_{G_i}(\BF_q)$. Take $rP_i (\BF_q) \in \mathcal M_i^{g,\gamma}$. Then by Theorem \ref{thm:about Ti} (1), we have $f_{\pi_i (r\i g r)} = Q ^{m}$, for some $Q \in \mathsf {SR}- \set{\lambda \pm 1}$ and some odd $m$. Here $Q \neq \lambda \pm 1$ because $\gamma \notin Z_{G_i}$. By Lemma \ref{lem:counting flags} we have $f_g = Q ^m UU^*$ for some $U \in \BF_q [\lambda]^{\mathrm{monic}}$. It then follows that $Q$, which is not $\lambda \pm 1$, is the unique element of $\mathsf {SR}$ with $m_Q$ odd. Part (2) is proved.
	
	We now prove part (3). By Lemma \ref{lem:even deg} we know $\deg Q_0$ is even. Define $$\mathscr I : =\{i ; 1\leq i \leq n, {2(n+1-i)}/{\deg Q_0} \text{ is an odd integer }\leq m_{Q_0} \}. $$
	For $i \in \mathscr I$, define $m_i : = {2(n+1-i)}/{\deg Q_0} .$ Note that $i\mapsto m_i$ is a bijection $\mathscr I \to \{1,3,5,\cdots,m_{Q_0} \}.$
	In particular $|\mathscr I | = (m_{Q_0}+1)/2$. In the proof of part (2), we saw that if $r P_i(\BF_q) \in \mathcal M_i^{g,\gamma}$ for some $1\leq i \leq n$ and some $\gamma \in \Gamma_i$, then
	\begin{align}\label{eq:char poly forced}
	i \in \mathscr I, \text{ and }
	f_{\pi_i (r\i g r)} = Q_0 ^{m_i}.
	\end{align} Conversely, assume $i \in \mathscr I$ and assume $r P_i(\BF_q)\in \mathcal M_i^g$ is such that (\ref{eq:char poly forced}) holds. Then $\pi_i (r \i g r)_s$ is 
	$G_i(k)$-conjugate to an element of $T_i(\BF_q)$, by Theorem \ref{thm:about Ti} (3). By Theorem \ref{thm:about Ti} (4) and the Lang--Steinberg theorem, $\pi_i (r \i g r)_s$ is in fact
	$G_i(\BF_q)$-conjugate to an element of $T_i(\BF_q)$. Thus $rP_i(\BF_q) \in \mathcal M_i ^{g,\gamma}$ for a unique $\gamma \in \Gamma_i$. In conclusion, we have a bijection
	\begin{align}\label{eq:bij} & 
	\set {(i,\gamma, r P_i(\BF_q)) ; 1\leq i \leq n, \gamma \in \Gamma_i, r P_i(\BF_q) \in \mathcal M_i^{g,\gamma}}  \isom \\ \nonumber &   \set {(i, rP_i(\BF_q)) ; i \in \mathscr I,  r P_i(\BF_q)\in \mathcal M_i^g, f_{\pi_i (r\i g r )} = Q_0^{m_i} } \\ 
	\nonumber  & (i,\gamma, rP_i(\BF_q))  \mapsto (i,rP_i(\BF_q)). 
	\end{align}
	We also note that if $(i,\gamma, rP_i(\BF_q))$ is in the left hand side of (\ref{eq:bij}), then $f_{\gamma} = Q_0 ^{m_i}$, and so by Theorem \ref{thm:about Ti} (5) we have 
	\begin{align}\label{eq:d_0/2}
	\TT(w_i,\gamma) = \frac{\deg Q_0}  {2}.
	\end{align}
	Now we compute
	\begin{align*}
	& \tr (g, J,\mathscr L)  =  \sum _{i=1} ^{n}\sum _{\gamma \in \Gamma_i} \# \mathcal M_i^{g,\gamma} \cdot  \TT (w_i , \gamma) && \text{(by Thm. \ref{thm:char formula} )} \\
	& =  \sum _{i\in \mathscr I} \# \set{rP_i(\BF_q)\in \mathcal M_i^{g} ; f_{\pi_i (r\i g r)} = Q_0^{m_i}}  \cdot \frac{\deg Q_0}{2} && \text{(by (\ref{eq:bij}), (\ref{eq:d_0/2})) } \\
	&=   \frac {\deg Q_0}{2}
	\sum _{i\in \mathscr I} \# \set{U \in \BF_q [\lambda]^{\mathrm{monic}} ; UU^* = f_g/Q_0 ^{m_i}} && \text{(by Lem. \ref{lem:counting flags})} \\
	& = \frac {\deg Q_0}{2} \abs{\mathscr I} \MMM(f_g) && \text{(by Lem. \ref{lem:MMM})} \\ 
	& =  \frac{\deg Q_0}{2} \frac{m_{Q_0}+1}{2} \MMM(f_g). && \qedhere
	\end{align*} \end{proof}

\subsection{The odd special orthogonal group}
In this subsection we consider case (\ref{item:odd ortho}) in \S \ref{sec:basic loci}. 

We fix a non-degenerate $2n+1$-dimensional quadratic space $(\mathbb V,[\cdot, \cdot])$ over $\BF_q$, with $n\geq 0$. Let $\mathbb G = \SO (\mathbb V,[\cdot,\cdot])$. Let $V: = \mathbb V\otimes _{\BF_q} k$. By the classification of quadratic forms over $\BF_q$ (see \cite[\S 1.3]{kitaoka}), there exists an $\BF_{q}$-basis $\set{e_1,\cdots, e_{2n+1}}$ of $\mathbb V$, satisfying 
$$[e_{\alpha} ,e_{\beta}]= \delta_{2n+2, \alpha+\beta}, \quad \forall \alpha, \beta \neq n+1 ,$$$$[e_{n+1} ,e_{n+1}] \in \BF_q ^{\times}.$$
For each $1\leq i \leq n+1$, we define $$\mathbb V_i: =  \mathrm{span}_{\BF_{q}}(e_i , e_{i+1}, \cdots, e_{2n+2-i}) \subset \mathbb V, \qquad \mathbb W_i : = \mathrm{span}_{\BF_{q}} ({e_1,\cdots, e_{i}}) \subset \mathbb V. $$ 
We define $V : = \mathbb V \otimes k,$ $V_i: = \mathbb V_i \otimes k,$ $W_i: = \mathbb W_i \otimes k$.

Let $G = \mathbb G_k$. Let $B \subset G$ be the stabilizer of the flag
$W_1 \subset  W_2 \subset\cdots \subset W_{n}$ inside $V$. 
Then $B$ is a $\s$-stable Borel subgroup of $G$.
Let $T$ be the intersection of $G$ with the diagonal torus in $\GL(V)$ under the basis $e_1,\cdots, e_{2n+1}$. Then $T$ is the maximal torus of $G$ contained in $B$. 

We number the simple roots of $(G,B,T)$ according to Bourbaki \cite{bourbaki}. We consider the $\s$-unbranched datum $(J=\BS-\{s_{n}\}, \mathscr L = (s_1, \cdots, s_{n}) ) $. Following the notation of \S \ref{subsec:setting} and \S \ref{subsec:parabolic induction}, we have $i _{\max} = n+1$, and for $1\leq i \leq n+1$ we have
$$\mathbb P_i = \mathrm{Stab}_{\mathbb G} ( \mathbb W_{i-1}) , \qquad \mathbb L_i = \mathbb L_i^{\natural} = \GL(\mathbb W_{i-1}) \times \SO (\mathbb V_i),$$
$$ \mathbb G_i = \SO(\mathbb V_i) = \SO_{2(n+1-i)+1}, \qquad \mathbb H_i = \GL(\mathbb W_{i-1}) = \GL _{i-1}.$$
Here by convention $\mathbb W_0 = 0$ and $\GL_0 = \{1\}$. As in \S \ref{subsec:parabolic induction}, we have natural projections $\pi_i : \mathbb P_i \to \mathbb G_i$ and $\pi_i': \mathbb P_i \to \mathbb H_i$.

For any $h\in G_i(k)$, we denote by $ f_h \in k [\lambda]$ the characteristic polynomial of $h$ acting on $V_i$, which has degree $2(n+1-i)+1$. Thus if $h \in G_i(\BF_q)$, then $f_h$ is self-reciprocal in $\BF_q [\lambda]$. Similarly, for any $h \in H_i(k)$, we denote by $f_h (\lambda) \in k[\lambda]$ the characteristic polynomial of $h$ acting on $W_i$, which has degree $i-1$.

We fix $1\leq i \leq n$. Write $n' $ for $n+1 -i$. Thus $\mathbb G_i = \SO_{2n'+1}$, with $n' \geq 1$. Let $B_i'$ (resp.~$T_i'$) be the intersection of $G_i$ with the upper triangular subgroup (resp.~diagonal subgroup) of $\GL(V_i)$, under the $k$-basis $\set{e_i,\cdots, e_{2n+2-i}}$ of $V_i$. Then $B_i'$ is a $\s$-stable Borel subgroup of $G_i$, and $T_i'$ is a $\s$-stable maximal torus of $G_i$ contained in $B_i'$. For any $(\lambda_1,\cdots, \lambda_{n'}) \in (k^{\times})^{\oplus n'}$, let $\gamma'(\lambda_1,\cdots, \lambda_{n'})$ be the diagonal matrix $\diag(\lambda_1,\cdots, \lambda_{n'}, 1 ,  \lambda_{n'}^{-1},\cdots, \lambda_1^{-1})$ in $\GL(V_i)$ under the same basis. Then $\gamma'$ is an isomorphism $\GG_{m,k}^{n'} \isom T_i'$ (which is in fact defined over $\BF_q$). The Weyl group $W_i$ can be identified with $\set{\pm 1}^{\times n'} \rtimes S_{n'}$. We easily compute that $w_i$ acts on $T_i'$ in the following way: 
$$w_i:  \gamma'(\lambda_1,\cdots, \lambda_{n'}) \longmapsto \gamma' (\lambda_{n'}^{-1},\lambda_1,\cdots, \lambda_{n'-1}).$$ 
Also, $\s$ acts on $T_i'$ in the following way:
$$\s: \gamma'(\lambda_1,\cdots, \lambda_{n'}) \longmapsto \gamma'(\lambda_1^{\s},\cdots,\lambda_{n'-1}^{\s}, \lambda_{n'}^{\s}). $$
Remember that $T_i$ is by definition a $\s$-stable maximal torus of $G_i$ of type $w_i$. From the above discussion, we see that on $T_i$ we have coordinates 
$$ (k^{\times})^{\oplus n'} \isom T_i ,\qquad 	 
(\lambda_1,\cdots, \lambda_{n'}) \mapsto \gamma (\lambda_1,\cdots, \lambda_{n'}), $$  such that the eigenvalues (with multiplicities) of $\gamma (\lambda_1,\cdots, \lambda _{n'})$ acting on $V_i\cong k^{2n'+1}$ are $$\lambda_1,\cdots ,\lambda_{n'}, \lambda_1 ^{-1},\cdots, \lambda_{n'} ^{-1}, 1, $$
and such that 
\begin{align}\label{eq:Galois action, odd ortho}
\gamma(\lambda_1,\cdots, \lambda_{n'})^{\sigma} = \gamma ((\lambda_{n'}^{-1})^{\s}, \lambda_1 ^{\s}, \lambda_2 ^{\s}, \cdots, \lambda_{n'-1} ^{\s}).
\end{align}

\begin{theorem}\label{thm:about Ti, odd ortho}   We have the following statements about $T_i (\BF_q)$.
	\begin{enumerate}
		\item If $\gamma \in T_i(\BF_q)$, then  $f_{\gamma} (\lambda)= Q(\lambda)^m(\lambda-1)$ for some $Q\in \mathsf{SR}$, and some positive integer $m$. Moreover, either $Q(\lambda) = \lambda \pm 1$, or $m$ is odd.
		
		\item Let $Q \in \mathsf{SR}$. Assume $ m$ is an odd integer such that $m \deg Q = 2n'$. (In particular $Q(\lambda) \neq \lambda \pm 1$ for degree reasons). Then there exists $\gamma \in T_i (\BF_q)$ with $f_{\gamma}(\lambda)= Q (\lambda)^m (\lambda -1)$. 
		\item Let $Q$ and $m$ be as in part (2). Let $\gamma \in G_i(k)$ be a semi-simple element such that $f_{\gamma}(\lambda) = Q(\lambda)^m(\lambda -1)$. Then $\gamma$ is $G_i(k)$-conjugate to an element of $T_i (\BF_q)$.
		\item For any $\gamma \in T_i(\BF_q)$ such that $(\lambda+1)$ does not divide $f_{\gamma} (\lambda)$, the centralizer $G_{i,\gamma}$ is connected. 
		\item Let $\gamma \in T_i (\BF_q)$. Write $f_{\gamma} (\lambda)= Q(\lambda)^m (\lambda-1)$ as in part (1). Assume $Q(\lambda) \neq \lambda \pm 1$. Then $\TT(w_i,\gamma) = \deg Q$. 
	\end{enumerate}
\end{theorem}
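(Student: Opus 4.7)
The plan is to mirror the proof of Theorem \ref{thm:about Ti} for the even orthogonal case, making the adjustments forced by the odd dimension and the split type $B_{n'}$ structure. First I would set up coordinates $(\lambda_1,\dots,\lambda_{n'}) \in (k^\times)^{n'} \isom T_i$ such that the eigenvalues of $\gamma(\lambda_1,\dots,\lambda_{n'})$ acting on $V_i \cong k^{2n'+1}$ are $\lambda_1,\dots,\lambda_{n'},\lambda_1^{-1},\dots,\lambda_{n'}^{-1},1$; the extra eigenvalue $+1$ arises from the anisotropic one-dimensional summand of $V_i$ that every maximal torus fixes pointwise. Since $w_i$ is a Coxeter element of type $B_{n'}$ acting on the character lattice by $\epsilon_1 \mapsto \epsilon_2 \mapsto \cdots \mapsto \epsilon_{n'} \mapsto -\epsilon_1$, the Frobenius on $T_i$ takes the same form
\[
\gamma(\lambda_1,\dots,\lambda_{n'})^{\s} = \gamma((\lambda_{n'}^{-1})^{\s},\lambda_1^{\s},\dots,\lambda_{n'-1}^{\s})
\]
as in the even case.

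Parts (1)--(3) then follow the even-case arguments almost verbatim, with the factor $(\lambda-1)$ accounting for the fixed eigenvector. For part (1), Frobenius-invariance forces the $2n'$ values $\lambda_1,\dots,\lambda_{n'},\lambda_1^{-1},\dots,\lambda_{n'}^{-1}$ to form a single $\s$-orbit, so $f_\gamma(\lambda)/(\lambda-1)$ is a power of a unique $Q \in \mathsf{SR}$; if the exponent $m$ is even then $d := \deg Q$ divides $n'$, and combining $\lambda_1^{\s^d} = \lambda_1$ with $\lambda_1^{\s^{n'}} = \lambda_1^{-1}$ forces $\lambda_1 = \pm 1$ and hence $Q(\lambda) = \lambda \mp 1$. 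Part (2) is established by taking $\gamma = \gamma(\Lambda, \Lambda^{-1}, \dots, \Lambda)$ with $m$ total appearances of an admissible tuple $\Lambda$ for $Q$. For part (3), I would first conjugate the semi-simple $\gamma \in G_i(k)$ into $T_i(k)$, then reorder and invert its $n'$ coordinates via $W_i$ to reach an admissible presentation exactly as in the proof of Theorem \ref{thm:about Ti}(3); the full sign-change freedom in the type $B$ Weyl group, if anything, makes this step slightly easier.

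For part (4), I would show directly that the Weyl-group stabilizer of $\gamma$ in $W_i \cong \{\pm 1\}^{n'} \rtimes S_{n'}$ is generated by reflections in roots $\alpha$ with $\alpha(\gamma) = 1$, which gives connectedness of $G_{i,\gamma}$ by \cite[Theorem 3.5.3]{Carter}. Given $x \in W_i$ fixing $\gamma$, I would first reduce to the case $A(x) = \emptyset$ by multiplying $x$ by reflections in long roots $\pm\epsilon_\alpha \pm \epsilon_\beta$, following the even-case argument. Once $x$ only flips signs on a subset $B(x)$, the fixedness condition forces $\lambda_\beta \in \{\pm 1\}$ for each $\beta \in B(x)$, and the hypothesis that $-1$ is not an eigenvalue then gives $\lambda_\beta = 1$, so each individual sign flip equals the reflection in the short root $\epsilon_\beta$, which does kill $\gamma$. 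The reason the hypothesis on $-1$ is needed is precisely that the type $B$ Weyl group permits individual sign flips, which forces each $\lambda_\beta$ (not merely each \emph{pair}, as in the even case) to equal $1$.

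Finally, for part (5), I would run the same enumeration as in the even case: any $\gamma' \in T_i(\BF_q)$ that is $W_i$-conjugate to $\gamma$ is of the form $\gamma(\Lambda', (\Lambda')^{-1}, \dots, \Lambda')$ for an admissible tuple $\Lambda'$, and there are exactly $\deg Q$ such tuples by Lemma \ref{lem:even deg}. The key contrast with the even case is that the type $B$ Weyl group $W_i$ now contains all sign-change operations (not just those with an even number of flips), so \emph{every} admissible tuple is realized by some ${}^x \gamma$ with $x \in W_i$; combined with part (4) and Lemma \ref{lem:compute TT}, this gives $\TT(w_i,\gamma) = \deg Q$ rather than $\deg Q / 2$. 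The main obstacle I anticipate is the bookkeeping in part (4) to ensure the inductive reduction $A(x) \to \emptyset$ produces only reflections killing $\gamma$; the other parts are essentially direct transcriptions of the even-case arguments.
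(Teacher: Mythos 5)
Your proposal is correct and follows essentially the same route as the paper: it reuses the coordinate description of $T_i$ with the extra eigenvalue $1$, observes that the Frobenius action has the same form as in the even orthogonal case so that parts (1)--(3) carry over (with part (3) easier thanks to the larger Weyl group), and handles parts (4) and (5) exactly as the paper does, using the short roots $\epsilon_\alpha$ to absorb the individual sign flips (where the hypothesis that $-1$ is not an eigenvalue enters) and the full sign-change freedom of the type $B$ Weyl group to account for all $\deg Q$ admissible tuples, yielding $\TT(w_i,\gamma)=\deg Q$ instead of $(\deg Q)/2$.
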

\begin{proof}
	Observing that (\ref{eq:Galois action, odd ortho}) has the same form as (\ref{eq:Galois action}), one proves parts (1) (2) (3) in exactly the same way as parts (1) (2) (3) of Theorem \ref{thm:about Ti}. (In fact the proof of part (3) here is even easier, due to the fact that the Weyl group $W_i$ in the current case is larger.)
	
	The proof of part (4) is also similar to the proof of Theorem \ref{thm:about Ti} (4). In fact, using the same notation as the proof of Theorem \ref{thm:about Ti} (4), we can again reduce to the case $A(x) = \emptyset$. Then the new feature is that $\# B(x)$ need not be even. However, since $-1$ is not an eigenvalue by assumption, we know that $\epsilon _{\alpha} (\gamma) =1$ for all $\alpha \in B(x)$. Then $x$ is the product of the reflections associated to the roots $\epsilon _{\alpha}$, for $\alpha \in B(x)$. 
	
	The proof of part (5) is again similar to the proof of Theorem \ref{thm:about Ti} (5), the only difference being that here all $\deg Q$ admissible tuples $\Lambda'$ show up in the counting, as opposed to only $(\deg Q)/2$ of them. This is due to the fact that the Weyl group $W_i$ is larger in the current case. \end{proof}

\begin{lemma}\label{lem:counting flags, odd ortho} Let $g \in G(\BF_q) \cap \GL(V)^{\mathrm{reg}}$. For each $1\leq i \leq n+1$, let $\mathcal M_i^g$ be as in \S \ref{subsec:combining results}. We have a bijection 
	\begin{align*}
	\mathcal M_i ^g  & \isom  \set{ U\in \BF_q [\lambda] ^{\mathrm{monic}} ; \deg U = i-1, UU^* \text{ divides } f_g \text{ in }\BF_q [\lambda]} \\
	r P_i(\BF_q) & \mapsto f_{\pi_i' (r\i g r)} .
	\end{align*}
\end{lemma}
\begin{proof}
	The proof is identical to the proof of Lemma \ref{lem:counting flags}, based on the fact that all $(i-1)$-dimensional totally isotropic $\BF_q$-subspaces of $\mathbb V$ are in the same $G(\BF_q)$-orbit. 
\end{proof}
\begin{theorem}\label{thm:odd ortho}Let $g \in G(\BF_q) \cap \GL(V)^{\mathrm{reg}}$. We use the notations in Definition \ref{defn:MMM}. For each $Q \in \mathsf{SR}$, we simply write $m_Q$ for $m_Q(f_g)$. The following statements hold. 
	\begin{enumerate}
		\item We have $m _{(\lambda +1)} = 0$, and $m_{(\lambda -1)}$ is odd. 
		\item If $\tr (g,J ,\mathscr L) \neq 0$, then inside $ \mathsf {SR}- \set{\lambda -1}$ there is at most one element $Q_0$ with $m_{Q_0}$ odd. 
		\item Assume there exists a unique $Q_0 \in \mathsf {SR} - \set{\lambda -1}$ such that $m_{Q_0}$ is odd. Then 
		$$\tr (g, J,\mathscr L ) = \deg Q_0 \frac{m_{Q_0}+1}{2}\MMM(f_g).$$
		\item Assume there is no element $Q_0 \in \mathsf {SR} - \set{\lambda -1}$ such that $m_{Q_0}$ is odd. Then 
		$$\tr (g, J,\mathscr L ) = \frac{m_{(\lambda -1)}+1}{2}\MMM(f_g).$$ 
	\end{enumerate}
\end{theorem}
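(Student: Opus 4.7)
The plan is to mirror the strategy of Theorem \ref{thm:RZO case}, adapted to the odd-dimensional setting where $\deg f_g = 2n+1$ is odd and $T_i(\BF_q)$-elements carry an extra $(\lambda-1)$ factor per Theorem \ref{thm:about Ti, odd ortho} (1). Part (1) I would handle directly: Proposition \ref{prop:dealing with lambda-1} applied to $g \in \mathrm O(V) \cap \GL(V)^{\mathrm{reg}}$ forces $m_{(\lambda\pm 1)}$ to each be either zero or odd; the condition $\det g = 1$ rules out $m_{(\lambda+1)}$ being odd, so $m_{(\lambda+1)} = 0$; and since all $Q \in \mathsf{SR} - \{\lambda \pm 1\}$ have even degree (Lemma \ref{lem:even deg}) and non-self-reciprocal pairs $QQ^*$ contribute even degree, the parity $\deg f_g \equiv 1 \pmod 2$ forces $m_{(\lambda-1)}$ to be odd.

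For parts (2)--(4) I would invoke Theorem \ref{thm:char formula} (valid since $g \in G^{\mathrm{reg}}$ by Proposition \ref{prop:implication}) and analyze the contributions $\#\mathcal M_i^{g,\gamma}\cdot \TT(w_i,\gamma)$. Fixing $1 \leq i \leq n$ with $n' := n+1-i$, any $r P_i(\BF_q) \in \mathcal M_i^{g,\gamma}$ yields $f_g = U U^* \cdot f_\gamma$ where $U := f_{\pi_i'(r^{-1}gr)}$ has degree $i-1$ (using that the action on $\mathbb W_{i-1}$ and its dual contribute $U$ and $U^*$ respectively). By Theorem \ref{thm:about Ti, odd ortho} (1), $f_\gamma = Q^m(\lambda-1)$ for some $Q \in \mathsf{SR}$. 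The case $Q = \lambda+1$ is ruled out by part (1). The remaining cases are \emph{Case a:} $Q = \lambda-1$, which, since the center of $\SO_{2n'+1}$ is trivial, forces $\gamma = 1$ and $f_\gamma = (\lambda-1)^{2n'+1}$; and \emph{Case c:} $Q \in \mathsf{SR} - \{\lambda\pm 1\}$ with $m$ odd. The case $i=n+1$ is handled separately: $G_{n+1}$ is trivial, $\gamma=1$, $\TT(w_{n+1},1)=1$, and the constraint becomes $UU^*(\lambda-1) = f_g$ with $\deg U = n$.

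For part (2), if any Case c contribution occurs then $f_g = Q^m \cdot UU^* \cdot (\lambda-1)$ forces $m_Q(f_g) \equiv 1 \pmod 2$ while $m_{Q'}(f_g) = m_{Q'}(UU^*)$ is even for every other $Q' \in \mathsf{SR} - \{Q,\lambda-1\}$; hence $Q$ is the unique element of $\mathsf{SR} - \{\lambda-1\}$ with odd multiplicity, giving the desired $Q_0$. For part (3), assuming such a unique $Q_0$ exists, Case a and the $i=n+1$ contribution would require $f_g/(\lambda-1)^{2n'+1}$ or $f_g/(\lambda-1)$ to have the shape $UU^*$, which contradicts $m_{Q_0}$ being odd, so only Case c contributes. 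Introducing $\mathscr I := \{i : 1\le i \le n,\ m_i := 2(n'+1)-1\,???\}$ wait, let me re-set this: $\mathscr I := \{i : 1\leq i \leq n,\ 2n'/\deg Q_0 \text{ is odd and} \leq m_{Q_0}\}$ so that $|\mathscr I| = (m_{Q_0}+1)/2$; by Theorem \ref{thm:about Ti, odd ortho} (3)--(4) combined with Lang--Steinberg (note $(\lambda+1)\nmid f_\gamma$, so the centralizer is connected), each $i \in \mathscr I$ contributes $\#\mathcal M_i^{g,\gamma} = \#\{U \in \BF_q[\lambda]^{\mathrm{monic}}: UU^* = f_g/(Q_0^{m_i}(\lambda-1))\}$. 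Applying Lemma \ref{lem:MMM} to the self-reciprocal polynomial $f_g/(\lambda-1)$ gives this count as $\MMM(f_g)$, and Theorem \ref{thm:about Ti, odd ortho} (5) supplies $\TT(w_i,\gamma) = \deg Q_0$; summing yields the desired formula.

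For part (4), only Case a and the $i=n+1$ term contribute, all with $\gamma = 1$ and $\TT(w_i,1) = 1$ (by direct computation from Definition \ref{defn:TT}, since $G_{i,1} = G_i$ is connected and $T_i \cap \{1\} = \{1\}$). The allowed indices are $n' \in \{0, 1, \ldots, (m_{(\lambda-1)}-1)/2\}$, each contributing $\MMM(f_g)$ by the same $UU^*$-counting argument, yielding $((m_{(\lambda-1)}+1)/2)\cdot \MMM(f_g)$. I expect the main obstacle to be the careful bookkeeping of the $i = n+1$ boundary case (where $G_i$ is trivial and Theorem \ref{thm:about Ti, odd ortho} does not literally apply) together with verifying connectedness of centralizers at each relevant $\gamma$ to legitimize the Lang--Steinberg step; everything else follows the template of Theorem \ref{thm:RZO case} essentially verbatim.
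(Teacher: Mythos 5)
Your proposal is correct and follows essentially the same route as the paper: part (1) via Proposition \ref{prop:dealing with lambda-1} and the determinant (your degree-parity argument for $m_{(\lambda-1)}$ odd is a trivial variant of the paper's observation that $\lambda-1$ always divides $f_g$), and parts (2)--(4) via Theorem \ref{thm:char formula}, Theorem \ref{thm:about Ti, odd ortho}, Lemma \ref{lem:counting flags, odd ortho} and Lemma \ref{lem:MMM}, with the same index set $\mathscr I$, the same Lang--Steinberg/connectedness step, and the same treatment of the boundary case $i=n+1$. The only cosmetic omission is that in part (2) you should also note that if only the $Q_0=\lambda-1$ or $i=n+1$ type contributions occur, the factorization $f_g=(\lambda-1)^{\mathrm{odd}}UU^*$ shows no element of $\mathsf{SR}-\set{\lambda-1}$ has odd multiplicity, so ``at most one'' still holds.
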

\begin{proof}
	Part (1) follows from Proposition \ref{prop:dealing with lambda-1}, the fact that $\lambda -1$ always divides $f_g$, and the fact that $m_{(\lambda +1)}$ must be even in order for $\det g =1$. 
	
	By Proposition \ref{prop:implication}, we have $g \in G(\BF_q) \cap G^{\mathrm{reg}}$, and so we may apply Theorem \ref{thm:char formula} to compute $\tr (g, J ,\mathscr L)$ in the following.
	
	We prove part (2). Assume $\tr(g, J,\mathscr L) \neq 0$. Then there exist $1 \leq i \leq n+1$ and $\gamma \in \Gamma_i$ such that  $\mathcal M_i^{g,\gamma} \neq \emptyset$.  Take $rP_i (\BF_q) \in \mathcal M_i^{g,\gamma}$. If $i =n+1$, then $f_{\pi_i (r\i g r)} = \lambda -1$. If $1 \leq i \leq n$, then by Theorem \ref{thm:about Ti, odd ortho} (1), we have $f_{\pi_i (r\i g r)}= Q(\lambda) ^{m} (\lambda -1)$, for some $Q \in \mathsf {SR}$ and some integer $m>0$. To simplify notation we set $Q: = 1$ and $m: =0$ when $i = n+1$. Then in all cases $f_{\pi_i (r\i g r)} = Q(\lambda) ^{m} (\lambda -1)$. By Lemma \ref{lem:counting flags, odd ortho} we have 
	\begin{align}\label{eq:analyze parity}
	f_g (\lambda) = Q (\lambda)^m  (\lambda -1)U(\lambda)U^*(\lambda)
	\end{align}
	for some $U \in \BF_q [\lambda]^{\mathrm{monic}}$. Now if $Q(\lambda) = \lambda-1$ or $m =0$, then it follows from (\ref{eq:analyze parity}) that $\lambda-1$ is the only element of $\mathsf {SR}$ whose multiplicity in $f$ is odd. On the other hand, if $Q(\lambda) \neq \lambda-1$ and $m> 0 $, then $Q(\lambda) \neq \lambda\pm 1$ by part (1), and we know that $m$ is odd by Theorem \ref{thm:about Ti, odd ortho} (1). In this case, we conclude from (\ref{eq:analyze parity}) that $m_Q$ is odd, and that $Q$ is the unique element of $\mathsf {SR} -\set{\lambda -1}$ whose multiplicity in $f$ is odd. Part (2) is proved.
	
	We remark that the above analysis shows that under the sole assumption that $\mathsf {SR} -\set{\lambda -1}$ has an element $Q$ with $m_{Q}$ odd, we have	  \begin{align} \label{eq:n+1 does not contribute}
	\mathcal M_{n+1} ^{g, \gamma} = \emptyset,\quad \forall \gamma \in \Gamma _{n+1}
	\end{align} (where $\Gamma_{n+1}$ in fact has only one element, the identity).
	
	We now prove part (3). Under the hypothesis of part (3), the assertion (\ref{eq:n+1 does not contribute}) holds. Since $Q_0 \neq \lambda \pm 1$, by Lemma \ref{lem:even deg} we know that $\deg Q_0$ is even. 
	Define $$\mathscr I : =\{i ; 1\leq i \leq n, {2(n+1-i)}/{\deg Q_0} \text{ is an odd integer }\leq m_{Q_0} \}. $$
	For $i \in \mathscr I$, define $m_i : = {2(n+1-i)}/{\deg Q_0} .$ Note that $i\mapsto m_i$ is a bijection $\mathscr I \to \{1,3,5,\cdots,m_{Q_0} \}.$
	In particular $|\mathscr I | = (m_{Q_0}+1)/2$. Similar to the bijection (\ref{eq:bij}), we obtain a bijection
	\begin{align}\label{eq:bij, odd ortho} & 
	\set {(i,\gamma, r P_i(\BF_q)) ; 1\leq i \leq n, \gamma \in \Gamma_i, r P_i(\BF_q) \in \mathcal M_i^{g,\gamma}} \isom  \\ \nonumber & \set {(i, rP_i(\BF_q)) ; i \in \mathscr I,  r P_i(\BF_q)\in \mathcal M_i^g, f_{\pi_i (r\i g r )} = Q_0^{m_i}\cdot (\lambda -1) }  \\ \nonumber & (i,\gamma, r P_i(\BF_q)) \mapsto (i,rP_i(\BF_q)),
	\end{align}
	based on parts (3) (4) of Theorem \ref{thm:about Ti, odd ortho} (part (4) being applicable because $m_{(\lambda+1)} =0$). We also note that if $(i,\gamma, rP_i(\BF_q))$ is in the left hand side of (\ref{eq:bij, odd ortho}), then $f_{\gamma}(\lambda) = Q_0(\lambda) ^{m_i}(\lambda-1)$, and so by Theorem \ref{thm:about Ti, odd ortho} (5) we have 
	\begin{align}\label{eq:d_0}
	\TT(w_i,\gamma) = \deg Q_0.
	\end{align}
	Now we compute
	\begin{align*}
	& \tr (g, J,\mathscr L)    = \sum _{i=1} ^{n}\sum _{\gamma \in \Gamma_i} \# \mathcal M_i^{g,\gamma} \cdot  \TT (w_i , \gamma)   && \text{(by Thm. \ref{thm:char formula}, and (\ref{eq:n+1 does not contribute}) )} \\
	& = \sum _{i\in \mathscr I} \# \set{rP_i(\BF_q)\in \mathcal M_i^{g} ; f_{\pi_i (r\i g r)} = Q_0^{m_i} \cdot (\lambda -1)}  \cdot  \deg Q_0    && \text{(by (\ref{eq:bij, odd ortho}), (\ref{eq:d_0}))}\\
	& =   \deg Q_0\sum _{i\in \mathscr I} \# \set{U \in \BF_q [\lambda]^{\mathrm{monic}} ; UU^* = \frac{f_g} {Q_0 ^{m_i}(\lambda-1)}}   && \text{(by Lem. \ref{lem:counting flags, odd ortho})}  \\
	& = \deg Q_0 \abs{\mathscr I} \MMM(\frac{f_g}{\lambda -1})  & & \text{(by Lem. \ref{lem:MMM} \ignore{applied to $\frac{f_g}{\lambda-1}$ and $Q_0$})}\\
	& = \deg Q_0 \frac{m_{Q_0}+1}{2}\MMM(f_g).
	\end{align*}
	In the second last step Lemma \ref{lem:MMM} is applicable because $Q_0$ is the unique element of $\mathsf {SR}$ such that $m_{Q_0} (f_g/ (\lambda-1))$ is odd, which follows from the definition of $Q_0$ and part (1). Part (3) is proved.
	
	Finally we prove part (4). By the proof of part (2), we know that for any $1\leq i \leq n+1$, we have $\mathcal M_i ^{g,\gamma} \neq \emptyset$ only if $f_{\gamma} (\lambda) = (\lambda -1) ^{2(n+1-i) +1}$. The last condition is equivalent to $\gamma = \id \in T_i$. 
	
	Define $$\mathscr I = \set{i \in \ZZ; n +1 - \frac{m_{(\lambda -1)} -1}{2} \leq i \leq n+1}.$$
	Now assume $rP_i(\BF_q) \in \mathcal M_i ^{g, \id }$ for some $1\leq i\leq n+1$. Then we have 
	\begin{align}\label{eq:forced}
	f_{\pi_i (r\i g r)} (\lambda)= (\lambda -1 ) ^{2(n+1-i) +1}.
	\end{align} In particular, $2(n+1-i) +1 \leq m_{\lambda -1}$, and so $i \in \mathscr I$. Conversely, assume $i \in \mathscr I$, and $r P_i(\BF_q) \in \mathcal M_i ^g$ such that (\ref{eq:forced}) holds. Then $r P_i(\BF_q) \in \mathcal M_i ^{g, \id}$ because the only semi-simple element of $G_i$ whose characteristic polynomial equals $(\lambda -1 ) ^{2(n+1-i) +1}$ is the identity. Therefore similar to the proof of part (3), we have 
	\begin{align*}
	\tr(g, J ,\mathscr L) & = \sum _{i \in \mathscr I} \TT(w_i , \id) \cdot \# \set{ U \in \BF_q [\lambda] ^{\mathrm{monic}} ; UU^* = f_g /(\lambda -1)^{2(n+1-i) +1}}\\ & = \sum _{i\in \mathscr I} \TT (w_i, \id) \MMM(f_g).
	\end{align*}
	By Definition \ref{defn:TT}, we have $\TT(w_i,\id) =1$ for each $i \in \mathscr I$. Hence
	\[ \pushQED{\qed} \tr(g, J ,\mathscr L) =  \abs{\mathscr I}\MMM(f_g)  = \frac{m_{(\lambda -1)} +1}{2} \MMM(f_g) .  \qedhere \]	
\end{proof}

\subsection{The symplectic group} In this subsection we consider case (\ref{item:symp}) in \S \ref{sec:basic loci}. 

We fix a $2n$-dimensional symplectic space $(\mathbb V,[\cdot, \cdot])$ over $\BF_{q}$, with $n \geq 0$. Let $\mathbb G = \Sp (\mathbb V, [\cdot, \cdot])$. We fix an $\BF_{q}$-basis $\set{e_1,\cdots, e_{2n}}$ of $\mathbb V$, satisfying 
$$[e_{\alpha} ,e_{\beta}]= \delta_{2n+1, \alpha+\beta}, ~\forall 1\leq \alpha \leq \beta \leq 2n. $$
For each $1\leq i \leq n+1$, we define $$\mathbb V_i: =  \mathrm{span}_{\BF_{q}}(e_i , e_{i+1}, \cdots, e_{2n+1-i}) \subset \mathbb V, \qquad \mathbb W_i : = \mathrm{span}_{\BF_{q}} ({e_1,\cdots, e_{i}}) \subset \mathbb V. $$ 
We define $V : = \mathbb V \otimes k,$ $V_i: = \mathbb V_i \otimes k,$ $W_i: = \mathbb W_i \otimes k$. 

Let $G = \mathbb G_k$. Let $B \subset G$ be the stabilizer of the flag
$W_1 \subset  W_2 \subset\cdots \subset W_{n}$ inside $V$. Then $B$ is a $\s$-stable Borel subgroup of $G$. Let $T$ be the intersection of $G$ with the diagonal torus in $\GL(V)$ under the basis $e_1,\cdots, e_{2n}$. Then $T$ is the maximal torus of $G$ contained in $B$.

We number the simple roots of $(G,B,T)$ according to Bourbaki \cite{bourbaki}. We consider the $\s$-unbranched datum $(J=\BS-\{s_{n}\}, \mathscr L = (s_1, \cdots, s_{n}) ) $. Following the notation of \S \ref{subsec:setting} and \S \ref{subsec:parabolic induction}, we have $i _{\max} = n+1$, and for $1\leq i \leq n+1$ we have
$$\mathbb P_i = \mathrm{Stab}_{\mathbb G} ( \mathbb W_{i-1}), \qquad \mathbb L_i = \mathbb L_i^{\natural} = \GL(\mathbb W_{i-1}) \times \Sp (\mathbb V_i),$$
$$ \mathbb G_i = \Sp(\mathbb V_i) = \Sp_{2(n+1-i)}, \qquad \mathbb H_i = \GL(\mathbb W_{i-1}) = \GL _{i-1}.$$
Here by convention $\mathbb W_0 = 0$ and $\GL_0 = \{1\}$. As in \S \ref{subsec:parabolic induction}, we have natural projections $\pi_i : \mathbb P_i \to \mathbb G_i$ and $\pi_i': \mathbb P_i \to \mathbb H_i$.

For any $h\in G_i(k)$, we denote by $ f_h \in k [\lambda]$ the characteristic polynomial of $h$ acting on $V_i$, which has degree $2(n+1-i)$. Thus if $h \in G_i(\BF_q)$, then $f_h$ is self-reciprocal in $\BF_q [\lambda]$. Similarly, for any $h \in H_i(k)$, we denote by $f_h (\lambda) \in k[\lambda]$ the characteristic polynomial of $h$ acting on $W_i$, which has degree $i-1$.

\begin{theorem}\label{thm:about Ti, symp} We fix $1\leq i \leq n$. Write $n' $ for $n+1 -i$. Thus $\mathbb G_i = \Sp_{2n'}$, with $n' \geq 1$. We have the following statements about $T_i (\BF_q)$.
	\begin{enumerate}
		\item If $\gamma \in T_i(\BF_q)$, then $f_{\gamma}= Q^m$ for some irreducible, self-reciprocal $Q \in \BF_q [\lambda]$, and some positive integer $m$. Moreover, either $Q(\lambda) = \lambda \pm 1$, or $m$ is odd.
		
		\item Let $Q \in \BF_q [\lambda]$ be an irreducible, self-reciprocal polynomial. Assume $ m$ is an odd integer such that $m \deg Q = 2n'$. (In particular $Q(\lambda) \neq \lambda \pm 1$). Then there exists $\gamma \in T_i (\BF_q)$ with $f_{\gamma}= Q ^m$. 
		\item Let $Q$ and $m$ be as in part (2). Let $\gamma \in G_i(k)$ be a semi-simple element such that $f_{\gamma}=Q^m$. Then $\gamma$ is $G_i(k)$-conjugate to an element of $T_i (\BF_q)$.
		\item Let $\gamma \in T_i (\BF_q)$. Write $f_{\gamma} =Q^m$ as in part (1). Assume $Q(\lambda) \neq \lambda \pm 1$. Then $\TT(w_i,\gamma) = \deg Q$. 
	\end{enumerate}
\end{theorem}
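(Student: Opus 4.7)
The plan is to prove this theorem by direct analogy with Theorem \ref{thm:about Ti}, exploiting the fact that $G_i = \Sp_{2n'}$ shares the same parametrization of its Coxeter torus as the non-split $\SO_{2n'}$ of the even orthogonal case, but has the strictly larger Weyl group $W_i = \{\pm 1\}^{n'} \rtimes S_{n'}$. First I would fix coordinates $(k^\times)^{n'} \isom T_i$, $(\lambda_1, \ldots, \lambda_{n'}) \mapsto \gamma(\lambda_1, \ldots, \lambda_{n'})$, such that the eigenvalues of $\gamma(\lambda_1, \ldots, \lambda_{n'})$ on $V_i$ are $\lambda_1, \ldots, \lambda_{n'}, \lambda_1^{-1}, \ldots, \lambda_{n'}^{-1}$, and such that the $w_i$-twisted Galois action takes the form
\begin{equation*}
\gamma(\lambda_1, \ldots, \lambda_{n'})^{\s} = \gamma((\lambda_{n'}^{-1})^{\s}, \lambda_1^{\s}, \ldots, \lambda_{n'-1}^{\s}),
\end{equation*}
identical in shape to (\ref{eq:Galois action}). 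Parts (1) and (2) would then be proved exactly as in the corresponding parts of Theorem \ref{thm:about Ti}, since those arguments use only the parametrization of $T_i$ and the form of the Galois action.

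For part (3), I would adapt the proof of part (3) of Theorem \ref{thm:about Ti}. The new ingredient is that $W_i$ now permits flipping any single coordinate (via a reflection associated to a root $\pm 2\epsilon_\alpha$, which is present in $C_{n'}$ but absent in $D_{n'}$). Hence given a semi-simple $\gamma \in T_i(k)$ with $f_\gamma = Q^m$, $W_i$-conjugation freely permutes the coordinates and flips any subset of signs, so we may immediately arrange $\gamma = \gamma(\Lambda, \Lambda^{-1}, \ldots, \Lambda)$ (with $m$ alternating blocks ending in $\Lambda$, since $m$ is odd) for some admissible tuple $\Lambda$ in the sense of Definition \ref{defn:adm tuple}. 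This element lies in $T_i(\BF_q)$ by admissibility, so we are done. The $\bar{\Lambda}$-correction required in the even orthogonal case is not needed here.

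For part (4), I would first invoke that $\Sp_{2n'}$ is simply connected, so by Steinberg's theorem (cf.\ \S \ref{subsubsec:conn setting}) the centralizer $G_{i,\gamma}$ is connected for any semi-simple $\gamma$; no separate analogue of Theorem \ref{thm:about Ti}(4) is needed. Then Lemma \ref{lem:compute TT} applies and yields $\TT(w_i, \gamma) = \# \{ {}^x\gamma : x \in W_i,\ {}^x\gamma \in T_i(\BF_q) \}$. Writing $\gamma = \gamma(\Lambda, \Lambda^{-1}, \ldots, \Lambda)$ for an admissible $\Lambda$, any $W_i$-conjugate of $\gamma$ in $T_i(\BF_q)$ is of the form $\gamma(\Lambda', (\Lambda')^{-1}, \ldots, \Lambda')$ for some admissible $\Lambda'$. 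By Lemma \ref{lem:even deg} there are exactly $\deg Q$ admissible tuples, and the crucial difference from the proof of Theorem \ref{thm:about Ti}(5) is that here all $\deg Q$ of them (rather than $(\deg Q)/2$) are realized as Weyl conjugates of $\gamma$, because the larger $W_i$ can flip any single sign. This gives $\TT(w_i, \gamma) = \deg Q$. I do not anticipate any substantial obstacle; the proof is a routine adaptation, made cleaner by the larger Weyl group and by the simple connectedness of $\Sp_{2n'}$.
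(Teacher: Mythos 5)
Your proposal is correct and is essentially the paper's argument: the paper proves this theorem by noting that the root datum of $\Sp_{2n'}$ is dual to that of $\SO_{2n'+1}$, so $T_i$ has the same coordinate description and twisted Galois action as in the odd orthogonal case, and then repeats the even-orthogonal arguments with the larger (full hyperoctahedral) Weyl group — exactly the simplifications you describe for parts (2)–(4), including obtaining all $\deg Q$ admissible tuples in the count for part (4). Your appeal to simple connectedness of $\Sp_{2n'}$ for the connectedness of centralizers is precisely the paper's Remark \ref{rem:symp auto conn}, so no genuinely different route is involved.
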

\begin{proof} Since the root datum of $G_i$ is dual to that of an odd special orthogonal group, the torus $T_i$ has a similar description as the torus $T_i$ in Theorem \ref{thm:about Ti, odd ortho}. Thus the proof of the theorem is identical to the proof of Theorem \ref{thm:about Ti, odd ortho}.\end{proof}
\begin{remark}\label{rem:symp auto conn}
	In Theorem \ref{thm:about Ti, symp} we do not state the analogue of Theorem \ref{thm:about Ti} (4) and Theorem \ref{thm:about Ti, odd ortho} (4). This is because $G$ being simply connected, the centralizer in $G$ of any semi-simple element is automatically connected, see \S \ref{subsubsec:conn setting}. 
\end{remark}
\begin{lemma}\label{lem:counting flags, symp} Let $g \in G(\BF_q) \cap \GL(V)^{\mathrm{reg}}$. For each $1\leq i \leq n+1$, let $\mathcal M_i^g$ be as in \S \ref{subsec:combining results}. We have a bijection 
	\begin{align*}
	\mathcal M_i ^g  & \isom  \set{ U\in \BF_q [\lambda] ^{\mathrm{monic}} ; \deg U = i-1, UU^* \text{ divides } f_g \text{ in }\BF_q [\lambda]} \\  r P_i(\BF_q) &  \mapsto f_{\pi_i' (r\i g r)} .
	\end{align*}
\end{lemma}
\begin{proof}
	The proof is identical to the proof of Lemma \ref{lem:counting flags}, based on the fact that all $(i-1)$-dimensional totally isotropic $\BF_q$-subspaces of $\mathbb V$ are in the same $G(\BF_q)$-orbit. 
\end{proof}
\begin{theorem}\label{thm:symp} Let $g \in G(\BF_q) \cap \GL(V)^{\mathrm{reg}}$.We use the notations in Definition \ref{defn:MMM}. For each $Q \in \mathsf{SR}$, we simply write $m_Q$ for $m_Q(f_g)$. The following statements hold.
	\begin{enumerate}
		\item Assume $\tr (g,J ,\mathscr L) \neq 0$. Then inside $ \mathsf {SR}$ there is at most one element $Q_0$ with $m_{Q_0}$ odd. Moreover, if such $Q_0$ exists, then $Q_0 \neq \lambda \pm 1$.
		\item Assume there exists a unique $Q_0 \in \mathsf {SR}$ such that $m_{Q_0}$ is odd. Assume $Q_0 \neq \lambda \pm 1$. Then 
		$$\tr (g, J,\mathscr L ) = \deg Q_0 \frac{m_{Q_0}+1}{2}\MMM(f_g). $$
		\item Assume there is no element $Q_0 \in \mathsf {SR}$ such that $m_{Q_0}$ is odd. Then 
		$$\tr (g, J,\mathscr L ) = \left( \frac{m_{(\lambda -1)}}{2} + 1 +  \frac{m_{(\lambda +1)}}{2}  \right)\MMM(f_g).$$ 
	\end{enumerate}
\end{theorem}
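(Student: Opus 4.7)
The plan is to mirror the proofs of Theorems \ref{thm:RZO case} and \ref{thm:odd ortho}, invoking Theorem \ref{thm:char formula} (applicable since $g \in G^{\mathrm{reg}}$ by Proposition \ref{prop:implication}), Lemma \ref{lem:counting flags, symp}, and Theorem \ref{thm:about Ti, symp}. The new feature compared to the odd-orthogonal case is that $G_i = \Sp_{2(n+1-i)}$ has two central elements $\pm I$, yielding two parallel ``central'' strata, and the index $i = n+1$ contributes as a ``degenerate'' stratum with trivial $G_i$.

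The starting observation: for any $rP_i(\BF_q) \in \mathcal{M}_i^{g,\gamma}$, the symplectic duality between $W_{i-1}$ and the complement of $V_i$ forces $f_g = f_\gamma \cdot UU^*$ with $U := f_{\pi_i'(r^{-1}gr)}$ of degree $i-1$. Combined with Theorem \ref{thm:about Ti, symp}(1), $f_\gamma = Q^m$ with $Q \in \mathsf{SR}$, and exactly one of the following occurs: (A) $Q \neq \lambda \pm 1$ with $m$ odd; (B) $Q = \lambda \pm 1$ with $\gamma = \pm I$ and $m = 2(n+1-i)$ even; or (C) $i = n+1$ with $f_\gamma = 1$. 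In case (A), $m_Q(f_g)$ becomes the unique odd multiplicity inside $\mathsf{SR}$; in cases (B), (C), all $m_{Q'}(f_g)$ for $Q' \in \mathsf{SR}$ are even. This immediately yields Part (1) and establishes the mutual exclusivity of the two scenarios.

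For Part (2), the hypothesis excludes (B) and (C), so only case (A) with $Q = Q_0$ survives. Setting $\mathscr{I} := \{ i : 1 \leq i \leq n,\ m_i := 2(n+1-i)/\deg Q_0 \in \{1,3,\ldots, m_{Q_0}\}\}$ (of size $(m_{Q_0}+1)/2$) and invoking Theorem \ref{thm:about Ti, symp}(3), Remark \ref{rem:symp auto conn} (so centralizers are automatically connected) together with Lang--Steinberg, I obtain a bijection analogous to (\ref{eq:bij}). Theorem \ref{thm:about Ti, symp}(4) gives $\TT(w_i, \gamma) = \deg Q_0$, and Lemmas \ref{lem:counting flags, symp} and \ref{lem:MMM} contribute a factor of $\MMM(f_g)$ per $i \in \mathscr{I}$, producing the stated formula $\deg Q_0 \cdot \frac{m_{Q_0}+1}{2} \MMM(f_g)$.

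For Part (3), only cases (B) and (C) contribute. The values of $i$ with $\gamma = I$ (resp.\ $\gamma = -I$) are precisely those with $1 \leq i \leq n$ and $2(n+1-i) \leq m_{(\lambda-1)}$ (resp.\ $m_{(\lambda+1)}$), accounting for $m_{(\lambda-1)}/2$ (resp.\ $m_{(\lambda+1)}/2$) values; plus one more for $i = n+1$, $\gamma = 1$. For each, $\TT(w_i,\gamma) = 1$ because $\gamma$ is central (so $G_{i,\gamma} = G_i$ is connected and ${}^{G_i(\BF_q)}\gamma = \{\gamma\}$). The quotient $h := f_g/f_\gamma$ has all self-reciprocal irreducible multiplicities even (by the hypothesis of Part (3) together with evenness of $2(n+1-i)$), so a direct enumeration gives $\#\{U \in \BF_q[\lambda]^{\mathrm{monic}} : UU^* = h\} = \MMM(f_g)$, without needing Lemma \ref{lem:MMM}. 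Summing over all contributing pairs yields the formula. The main obstacle is purely bookkeeping: to keep the three cases (A), (B), (C) cleanly separated and to verify that the degenerate $i = n+1$ stratum fits uniformly with the two central $\gamma = \pm I$ families in Part (3).
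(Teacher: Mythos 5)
Your proposal is correct and follows essentially the same route as the paper's proof: both reduce via Theorem \ref{thm:char formula}, Lemma \ref{lem:counting flags, symp} and Theorem \ref{thm:about Ti, symp} (with Remark \ref{rem:symp auto conn} and Lang--Steinberg for the bijection analogous to (\ref{eq:bij})) to the factorization $f_g = f_\gamma\, UU^*$, with the strata separated exactly as in your cases (A), (B), (C), the factor $\TT(w_i,\gamma)=\deg Q_0$ or $1$, and the count $\MMM(f_g)$ per contributing index. The only differences are cosmetic (your upfront trichotomy versus the paper's part-by-part analysis, and direct enumeration in part (3) where the paper cites the obvious analogue of Lemma \ref{lem:MMM}).
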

\begin{proof}
	By Proposition \ref{prop:implication}, we have $g \in G(\BF_q) \cap G^{\mathrm{reg}}$, and so we may apply Theorem \ref{thm:char formula} to compute $\tr (g, J ,\mathscr L)$ in the following.
	
	We prove part (1). Assume $\tr(g, J,\mathscr L) \neq 0$. Then there exist $1 \leq i \leq n+1$ and $\gamma \in \Gamma_i$ such that  $\mathcal M_i^{g,\gamma} \neq \emptyset$.  Take $rP_i (\BF_q) \in \mathcal M_i^{g,\gamma}$. If $i =n +1$, then $f_{\pi_i (r\i g r)} = 1$. If $1\leq i\leq n$, then by Theorem \ref{thm:about Ti, symp} (1), we have $f_{\pi_i (r\i g r)} = Q^m$, for some $Q \in \mathsf{SR}$ and some integer $m>0$. To simplify notation we set $Q:=1$ and $m:=0$ when $i=n+1$. Then in all cases $f_{\pi_i (r\i g r)} = Q^m$. By Lemma \ref{lem:counting flags, symp} we have $
	f_g = Q ^m UU^* $
	for some $U \in \BF_q [\lambda]^{\mathrm{monic}}$. It immediately follows that inside $\mathsf {SR}$ there is at most one element whose multiplicity in $f_g$ is odd. Moreover, if such an element exists, denoted by $Q_0$, then $Q$ in the current discussion must equal to $Q_0$, and $m$ must be odd. (In particular, $i \leq n$.) In this case, we show that $Q_0 \neq \lambda \pm 1$. In fact, if $Q_0 = \lambda\pm 1$, then $m$ is even because $Q^m = Q_0 ^m$ has even degree. This contradicts with our previous assertion that $m$ must be odd. Part (1) is proved. 
	
	We remark that the above analysis also shows that under the sole assumption that $\mathsf {SR}$ has an element $Q$ with $m_{Q}$ odd, we have	\begin{align} \label{eq:n+1 does not contribute, symp}
	\mathcal M_{n+1} ^{g, \gamma} = \emptyset,\quad \forall \gamma \in \Gamma _{n+1}
	\end{align} (where $\Gamma_{n+1}$ in fact has only one element, the identity).
	
	We now prove part (2). Under the hypothesis of part (2), the assertion (\ref{eq:n+1 does not contribute, symp}) holds. Since $Q_0 \neq \lambda \pm 1$, by Lemma \ref{lem:even deg} we know that $\deg Q_0$ is even. 
	Define $$\mathscr I : =\{i ; 1\leq i \leq n, {2(n+1-i)}/{\deg Q_0} \text{ is an odd integer }\leq m_{Q_0} \}. $$
	For $i \in \mathscr I$, define $m_i : = {2(n+1-i)}/{\deg Q_0} .$ Note that $i\mapsto m_i$ is a bijection $\mathscr I \to \{1,3,5,\cdots,m_{Q_0} \}.$
	In particular $|\mathscr I | = (m_{Q_0}+1)/2$. Similar to the bijection (\ref{eq:bij}), we obtain a bijection
	\begin{align}\label{eq:bij, symp} & \set {(i,\gamma, r P_i(\BF_q)) ; 1\leq i \leq n, \gamma \in \Gamma_i, r P_i(\BF_q) \in \mathcal M_i^{g,\gamma}} \isom  \\ \nonumber &  \set {(i, rP_i(\BF_q)) ; i \in \mathscr I,  r P_i(\BF_q)\in \mathcal M_i^g, f_{\pi_i (r\i g r )} = Q_0^{m_i} } \\
	\nonumber & (i,\gamma, r P_i(\BF_q))  \mapsto (i,rP_i(\BF_q))
	\end{align}
	based on Theorem \ref{thm:about Ti, symp} (3) and Remark \ref{rem:symp auto conn}.
	We also note that if $(i,\gamma, rP_i(\BF_q))$ is in the left hand side of (\ref{eq:bij, symp}), then $f_{\gamma} = Q_0 ^{m_i}$, and so by Theorem \ref{thm:about Ti, symp} (4) we have 
	\begin{align}\label{eq:d_0, symp}
	\TT(w_i,\gamma) = \deg Q_0.
	\end{align}
	Now we compute
	\begin{align*}
	& \tr (g, J,\mathscr L)    = \sum _{i=1} ^{n}\sum _{\gamma \in \Gamma_i} \# \mathcal M_i^{g,\gamma} \cdot  \TT (w_i , \gamma)  && \text{(by Thm. \ref{thm:char formula}, and (\ref{eq:n+1 does not contribute, symp}) )} \\
	& = \sum _{i\in \mathscr I} \# \set{rP_i(\BF_q)\in \mathcal M_i^{g} ; f_{\pi_i (r\i g r)} = Q_0^{m_i}}  \cdot  \deg Q_0  && \text{(by (\ref{eq:bij, symp}), (\ref{eq:d_0, symp}))}\\
	& =   \deg Q_0\sum _{i\in \mathscr I} \# \set{U \in \BF_q [\lambda]^{\mathrm{monic}} ; UU^* = f_g/ Q_0 ^{m_i}}. && \text{(by Lem. \ref{lem:counting flags, symp} )} 
	\\ & = \deg Q_0 \abs{\mathscr I} \MMM(f_g) && \text{(by Lem. \ref{lem:MMM})}\\
	& = \deg Q_0 \frac{m_{Q_0}+1}{2}\MMM(f_g).
	\end{align*}
	Part (2) is proved.
	
	Finally we prove part (3). We claim that for each $1\leq i \leq n+1$, we have $\mathcal M_i ^{g,\gamma} \neq \emptyset$ for some $\gamma \in \Gamma _i$ only if $f_{\gamma} (\lambda) = (\lambda  \pm 1) ^{2(n+1-i)}$. In fact, assume this is not the case. Take $rP_i(\BF_q) \in \mathcal M_i ^{g,\gamma} $. Then by Theorem \ref{thm:about Ti, symp} (1), we have $f_{\pi_i (r\i g r)} = Q^m$, for some $Q \in \mathsf {SR}$ and some odd integer $m$. By Lemma \ref{lem:counting flags, symp} we have $
	f_g = Q ^m UU^*
	$ for some $U \in \BF_q [\lambda]^{\mathrm{monic}}$, contradicting with the assumption that there is no element in $\mathsf{SR}$ with odd multiplicity in $f_g$. The claim is proved. 
	
	Define $$\mathscr I = \set{i \in \ZZ; n +1 - \frac{m_{(\lambda -1)}}{2} \leq i \leq n+1},\quad \mathscr J = \set{i \in \ZZ; n +1 - \frac{m_{(\lambda + 1)}}{2} \leq i \leq n}.$$
	Now assume $rP_i(\BF_q) \in \mathcal M_i ^{g, \gamma }$ for some $1\leq i\leq n+1$ and some $\gamma \in \Gamma _i$. Then by the previous claim one of the following two statements holds:
	\begin{itemize}
		\item $i \in \mathscr I,$ and $f_{\pi_i (r\i g r)} (\lambda)= (\lambda -1 ) ^{2(n+1-i)}$.
		\item $i \in \mathscr J,$ and $f_{\pi_i (r\i g r)} (\lambda)= (\lambda +1 ) ^{2(n+1-i)}$.
	\end{itemize}
	Moreover, in the above two cases, the image of $\gamma$ in $\GL(V_i)$ is $\id$ and $-\id $ respectively. Conversely, if $i \in \mathscr I$ and if $r P_i(\BF_q) \in \mathcal M_i ^g$ is such that $f_{\pi_i (r\i g r)} (\lambda)= (\lambda -1 ) ^{2(n+1-i)}$, then $r P_i(\BF_q) \in \mathcal M_i ^{g, \id}$. Similarly, if $i\in \mathscr J$  and if $r P_i(\BF_q) \in \mathcal M_i ^g$ is such that $f_{\pi_i (r\i g r)} (\lambda)= (\lambda +1 ) ^{2(n+1-i)}$, then $r P_i(\BF_q) \in \mathcal M_i ^{g, -\id}$. Therefore as in the proof of part (2), we have 
	\begin{align*}
	\tr(g, J ,\mathscr L) &  = \sum _{i \in \mathscr I} \TT(w_i , \id) \cdot \# \set{ U \in \BF_q [\lambda] ^{\mathrm{monic}} ; UU^* = f_g /(\lambda -1)^{2(n+1-i)}} \\ &  + \sum _{i \in \mathscr J} \TT(w_i , -\id) \cdot \# \set{ U \in \BF_q [\lambda] ^{\mathrm{monic}} ; UU^* = f_g /(\lambda +1)^{2(n+1-i)}} .
	\end{align*}
	Let $ i \in \mathscr I$. By the obvious analogue of Lemma \ref{lem:MMM} applied to $f_g/ (\lambda -1) ^{2(n+1-i)}$ and $Q_0 =1$, we have 
	$$\# \set{ U \in \BF_q [\lambda] ^{\mathrm{monic}} ; UU^* = f_g /(\lambda -1)^{2(n+1-i)}} = \MMM(f_g/ (\lambda -1) ^{2(n+1-i)}),$$ which is equal to $\MMM(f_g)$. Similarly, for $ i \in \mathscr J$, we have 
	$$\# \set{ U \in \BF_q [\lambda] ^{\mathrm{monic}} ; UU^* = f_g /(\lambda + 1)^{2(n+1-i)}} = \MMM(f_g).$$ On the other hand by Definition \ref{defn:TT} we have $\TT(w_i,\id) =1$ for all $i \in \mathscr I$ and  $\TT(w_i,-\id) =1$ for all $i \in \mathscr J$. Therefore 
	\[ \pushQED{\qed} \tr(g, J ,\mathscr L) =  (\abs{\mathscr I}+ \abs{\mathscr J} ) \MMM(f_g) = \left( \frac{m_{(\lambda -1)}}{2} + 1 +  \frac{m_{(\lambda +1)}}{2}  \right) \MMM(f_g). \qedhere \]
\end{proof}
\subsection{The odd unitary group}\label{subsec:odd unitary} In this subsection we consider case (\ref{item:unitary}) in \S \ref{sec:basic loci}. 

We fix a $(2n+1)$-dimensional Hermitian space $(\mathbb V,[\cdot, \cdot])$ over $\BF_{q^2}$ (for the quadratic extension $\BF_{q^2}/ \BF_q$), with $n \geq 0$. Let $\mathbb G = \UU (\mathbb V, [\cdot,\cdot])$. By \cite[Proposition 2.15]{PR}, the Witt index of $(\mathbb V,[\cdot,\cdot])$ is equal to the $\BF_q$-rank of $\mathbb G$, which we know is $n$. Also the norm map $\BF_{q^2}^{\times} \to \BF_q ^{\times}$ is surjective. Hence there exists an $\BF_{q^2}$-basis $\set{e_1,\cdots, e_{2n+1}}$ of $\mathbb V$, satisfying 
$$[e_{\alpha} ,e_{\beta}]= \delta_{2n+2, \alpha+\beta}. $$
For each $1\leq i \leq n+1$, we define $$\mathbb V_i: =  \mathrm{span}_{\BF_{q^2}}(e_i , e_{i+1}, \cdots, e_{2n+2-i}) \subset \mathbb V, \qquad \mathbb W_i : = \mathrm{span}_{\BF_{q^2}} ({e_1,\cdots, e_{i}}) \subset \mathbb V. $$ We fix an embedding $\BF_{q^2} \to k$, viewed as the identity, and we let $V: =\mathbb  V\otimes _{\BF_{q^2}} k$. For each $1\leq i\leq n+1$ we also let $V_i : = \mathbb V_i \otimes _{\BF_{q^2}} k \subset V$, and $W_i: = \mathbb W_i \otimes _{\BF_{q^2}} k \subset V$.

Let $G = \mathbb G_k$. The action of $G$ on $\mathbb V\otimes _{\BF_q} k \cong V \oplus (\mathbb V\otimes_{ \BF_{q^2},\s} k )$ preserves the subspace $V$, and this induces a $k$-isomorphism $G \cong \GL(V)$.  Let $B \subset G$ (resp.~$T \subset G$) be the upper triangular subgroup (resp.~diagonal subgroup) under the basis $\set{e_1,\cdots, e_{2n+1}}$. Then $B$ is a $\s$-stable Borel subgroup of $G$, and $T$ is the maximal torus of $G$ contained in $B$. 

We number the simple roots of $(G,B,T)$ according to Bourbaki \cite{bourbaki}. We consider the $\s$-unbranched datum $(J=\BS-\{s_{n}\}, \mathscr L = (s_1, \cdots, s_{n}) ) $. Following the notation of \S \ref{subsec:setting} and \S \ref{subsec:parabolic induction}, we have $i _{\max} = n+1$, and for $1\leq i \leq n+1$ we have
$$\mathbb P_i = \mathrm{Stab}_{\mathbb G} ( \mathbb W_{i-1}) , \qquad \mathbb L_i = \mathbb L_i^{\natural} = \GL_{\BF_{q^2}}(\mathbb W_{i-1}) \times \UU (\mathbb V_i),$$ $$ \mathbb G_i = \UU(\mathbb V_i) = \UU_{2(n+1-i)+1} , \qquad \mathbb H_i = \GL_{\BF_{q^2}}(\mathbb W_{i-1}) = \Res_{\BF_{q^2} / \BF_q} \GL _{i-1}.$$
Here by convention $\mathbb W_0 = 0$ and $\GL_0 = \{1\}$. As in \S \ref{subsec:parabolic induction}, we have natural projections 
$\pi_i: \mathbb L_i \to \mathbb G_i$ and $\pi_i ' : \mathbb L_i \to \mathbb H_i.$

The action of $G_i$ on $\mathbb V_i\otimes _{\BF_q} k \cong V_i \oplus (\mathbb V_i \otimes_{ \BF_{q^2},\s} k )$ preserves the subspace $V_i$, and this induces a $k$-isomorphism $G_i \cong \GL(V_i)\cong \GL_{2(n+1-i) +1}$.
For any $h\in G_i(k)$, we denote by $ f_h \in k[\lambda]$ the characteristic polynomial of $h$ acting on $V_i$, of degree $2(n+1-i) +1$. When $h \in G_i (\BF_q)$, we know that $f_h$ is self-reciprocal in $\BF_{q^2} [\lambda]$. Similarly, for any $h \in H_i(\BF_q) = \GL_{\BF_{q^2}} (\mathbb W_{i-1})$, we denote by $f_h (\lambda) \in \BF_{q^2}[\lambda]$ the characteristic polynomial of $h$ acting on $\mathbb W_{i-1}$, which has degree $i-1$.

We fix $1\leq i \leq n+1$. Write $n' $ for $n+1 -i$. Thus $\mathbb G_i = \UU_{2n'+1}$. It is easy to show that in $G_i$ there is a $\s$-stable maximal torus $T_i'$ of type $1$, with coordinates
$$ (k^{\times})^{\oplus 2n'+ 1} \isom T_i',  \qquad 	  
(\lambda_1,\cdots,\lambda_{2n'+1} ) \mapsto \gamma ' (\lambda_1,\cdots,\lambda_{2n'+1}), $$ satisfying the following conditions: 
\begin{itemize}
	\item The eigenvalues (with multiplicities) of $\gamma'(\lambda_1,\cdots, \lambda _{2n'+1})$ acting on $V_i$ are $\lambda_1,\cdots, \lambda_{2n'+1}.$
	\item The action of $\s$ on $T_i'$ sends $\gamma'(\lambda_1,\cdots, \lambda_{2n'+1})$ to $\gamma'(\lambda_{2n'+1}^{-q}, \cdots, \lambda_1^{-q})$.
	\item The action of $w_i$ on $T_i'$ sends $\gamma'(\lambda_1,\cdots, \lambda_{2n'+1})$ to $\gamma'(\lambda_{n'+1}, \lambda_1,  \cdots, \lambda_{n'}, \lambda_{n'+2}, \cdots, \lambda_{2n'+1})$. 
\end{itemize}
Then it easily follows that on $T_i$ we have coordinates 
$$ (k^{\times})^{\oplus 2n'+ 1} \isom T_i,  \qquad 	  
(\lambda_1,\cdots,\lambda_{2n'+1} ) \mapsto \gamma_0 (\lambda_1,\cdots,\lambda_{2n'+1}), $$ such that the eigenvalues (with multiplicities) of $\gamma_0 (\lambda_1,\cdots, \lambda _{2n'+1})$ acting on $V_i$ are $\lambda_1,\cdots, \lambda_{2n'+1},$ and such that 
$$\gamma_0 (\lambda_1,\cdots, \lambda _{2n'+1}) ^{\s} = \gamma_0(\lambda_{n'+1}^{-q}, \lambda_{2n'+1}^{-q}, \cdots, \lambda_{n'+2}^{-q}, \lambda_{n'}^{-q},\cdots, \lambda_1^{-q}). $$
We define new coordinates on $T_i$ $$ (k^{\times})^{\oplus 2n'+ 1} \isom T_i,  \qquad 	  
(\lambda_1,\cdots,\lambda_{2n'+1} ) \mapsto \gamma (\lambda_1,\cdots,\lambda_{2n'+1}), $$ by setting $$\gamma(\lambda_1,\cdots,\lambda_{2n'+1}) := \gamma_0(\lambda_1, \cdots, \lambda_{n'+1}, \lambda_{2n'+1}, \cdots, \lambda_{n'+2}).$$ Then we have 
\begin{align}\label{eq:Galois action, unitary original}
\gamma (\lambda_1,\cdots,\lambda_{2n'+1}) ^{\sigma} = \gamma ( \lambda_{n'+1}  ^{-q},  \cdots, \lambda _{2n'+1}^{-q}, \lambda _1 ^{-q}, \cdots, \lambda _{n'}^{-q}).
\end{align}
In particular, we have 
\begin{align}\label{eq:Galois action, unitary}
\gamma (\lambda_1,\cdots,\lambda_{2n'+1}) ^{\sigma^2} = \gamma ( \lambda _{2n'+1}^{\s^2}, \lambda _1 ^{\s^2}, \cdots, \lambda _{2n'}^{\s^2}).
\end{align}
\begin{theorem}\label{thm:about Ti, unitary} 
	We have the following statements about $T_i (\BF_q)$.
	\begin{enumerate}
		\item If $\gamma \in T_i(\BF_q)$, then $f_{\gamma}=Q ^m$ for some $Q \in \mathsf{SR}_2$, and some positive integer $m$.
		\item Let $Q \in \mathsf{SR}_2$. Assume $ m$ is an integer such that $m \deg Q = 2n'+1$. Then there exists $\gamma \in T_i (\BF_q)$ with $f_{\gamma}= Q ^m$. 
		\item Let $Q$ and $m$ be as in part (2). Let $\gamma \in G_i(\BF_q)$ be a semi-simple element such that $f_{\gamma} = Q^m$. Then $\gamma$ is $G_i(\BF_q)$-conjugate to an element of $T_i (\BF_q)$. 
		\item Let $\gamma \in T_i (\BF_q)$. Write $f_{\gamma} = Q^m$ as in part (1). Then $\TT(w_i,\gamma)=\deg Q$.  
	\end{enumerate}
\end{theorem}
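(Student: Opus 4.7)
The plan is to follow the template of Theorems \ref{thm:about Ti}, \ref{thm:about Ti, odd ortho}, and \ref{thm:about Ti, symp}, with the new combinatorial input coming from Lemma \ref{lem:odd deg} on odd-degree self-reciprocal polynomials in $\BF_{q^2}[\lambda]$. First I would introduce coordinates $(\lambda_1,\dots,\lambda_{2n'+1})$ on $T_i(k) \subset G_i(k)=\GL(V_i)$ given by the eigenvalues on the standard representation. Under these coordinates the standard unitary Frobenius is $(\sigma_0\lambda)_j = \lambda_{2n'+2-j}^{-q}$, and the Coxeter-twisted Frobenius $\sigma_w = w_i \circ \sigma_0$ satisfies
\[
(\sigma_w\lambda)_j = \lambda_{\tau(j)}^{-q}, \qquad \tau(j) := 2n'+2 - w_i^{-1}(j).
\]
A direct calculation for $w_i = s_{n'}s_{n'-1}\cdots s_1$ in $W_i = S_{2n'+1}$ shows that $\tau^2$ is a single $(2n'+1)$-cycle on $\{1,\dots,2n'+1\}$.

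For part (1), the invariance $\sigma_w\gamma = \gamma$ gives $\lambda_{\tau^2(j)} = \lambda_j^{q^{-2}}$, and since $\tau^2$ is a $(2n'+1)$-cycle, every $\lambda_j$ lies in the $\sigma^2$-orbit of $\lambda_1$ in $k^\times$. Hence $f_\gamma$ has a single irreducible factor in $\BF_{q^2}[\lambda]$, which must be self-reciprocal because $f_\gamma$ itself is (as $\gamma \in \UU$); this forces $f_\gamma = Q^m$ with $Q \in \mathsf{SR}_2$ and $m \deg Q = 2n'+1$.

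For part (2), given $Q \in \mathsf{SR}_2$ of degree $d$ with $dm = 2n'+1$, I fix an admissible enumeration $\mu_1,\dots,\mu_d$ of the roots of $Q$ (Definition \ref{def:odd adm}), set $\lambda_1 := \mu_1$, and propagate via the $\sigma_w$-relations to obtain a candidate $\gamma \in T_i(k)$. Walking once around the $\tau$-cycle shows that $\gamma$ actually lies in $T_i(\BF_q)$ provided $\lambda_1^{q^{2n'+1}+1} = 1$; this follows from Lemma \ref{lem:odd deg}(2), which yields $\mu_1^{q^d+1}=1$, combined with the parity fact that $m=(2n'+1)/d$ is odd. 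For part (3), the derived group $\SU_{2n'+1}$ of $G_i$ is simply connected, so by Steinberg's theorem (\S\ref{subsubsec:conn setting}) every semisimple centralizer $G_{i,\gamma}$ is connected; two semisimple elements of $G_i(\BF_q)$ with the same characteristic polynomial are $G_i(k)=\GL(V_i)$-conjugate, and Lang--Steinberg combined with connectedness upgrades this to $G_i(\BF_q)$-conjugacy.

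Finally for part (4), Definition \ref{defn:TT} together with part (3) and the connected centralizer identifies $\TT(w_i,\gamma)$ with the number of $\gamma' \in T_i(\BF_q)$ satisfying $f_{\gamma'} = Q^m$; via the parametrization of part (2), this equals the number of $\lambda_1 \in k^\times$ that are roots of $Q$, namely $\deg Q$. The main technical obstacle is the combinatorial verification that the explicit permutation $\tau^2$ is a single $(2n'+1)$-cycle in $S_{2n'+1}$; once this input is secured, all the remaining steps run parallel to the preceding three theorems.
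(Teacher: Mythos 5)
Your proposal is correct and follows essentially the same route as the paper: explicit eigenvalue coordinates on $T_i$ with the twisted Frobenius acting (after squaring) as a $(2n'+1)$-cycle, Lemma \ref{lem:odd deg} to supply the rationality/consistency condition, simple-connectedness of the derived group for the conjugacy statement, and a count of rational torus elements with characteristic polynomial $Q^m$ for $\TT(w_i,\gamma)$. Your parametrization by the single coordinate $\lambda_1$ propagated around the cycle is just a repackaging of the paper's admissible enumerations $\gamma(\Lambda,\dots,\Lambda)$ (note only that Lemma \ref{lem:odd deg}(2) assumes $d\geq 3$, the case $d=1$ being trivial from self-reciprocality), so the verdict is: same approach, correctly executed.
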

\begin{proof} 
	
	\textbf{(1)} Write $\gamma = \gamma(\lambda_1,\cdots,\lambda_{2n'+1})$. Since $\gamma^{\sigma} = \gamma$, it follows from (\ref{eq:Galois action, unitary}) that all eigenvalues of $\gamma$ are in one $\s^2$-orbit. Hence $f_{\gamma}$ has a unique monic irreducible factor $Q \in \BF_{q^2}[\lambda]$. Since $f_{\gamma}$ is self-reciprocal, so is $Q$.
	
	\textbf{(2)} Let $d = \deg Q$. Then $d$ is odd by hypothesis. Let $\Lambda = (\lambda_1,\cdots, \lambda_{d})$ be an admissible enumeration of the roots of $Q$, in the sense of Definition \ref{def:odd adm}. Then
	$
	\gamma : = \gamma (\Lambda,\cdots, \Lambda)$
	(with $m$ appearances of $\Lambda$) is an element of $T_i (k)$. We now show that $\gamma\in T_i (\BF_q)$. 
	
	If $d =1$, then $\lambda_1 ^{-q} = \lambda_1$, and it is clear that $\gamma \in T_i (\BF_q)$ by (\ref{eq:Galois action, unitary original}). Now assume $d \geq 3$. By (\ref{eq:Galois action, unitary original}), we need only show that 
	$\lambda_{\alpha} ^{-q} = \lambda_{\alpha +n'+1}$, where the subscripts are in $\ZZ/d\ZZ$, for all $\alpha \in \ZZ / d \ZZ$. By Lemma \ref{lem:odd deg} (2), it suffices to show that $n'+1 \equiv  (d+1)/2 \mod d$. Since $d$ is odd, the last congruence is equivalent to $2n'+2 \equiv d+1 \mod d$. But the last congruence is true because $2n'+1 = md$. We have proved that $\gamma\in T_i (\BF_q)$. By construction, $f_{\gamma} = Q^m$. Part (2) is proved. 
	
	\textbf{(3)} Firstly, as $G_i$ is isomorphic to $\GL(V_i) = \GL_{2n'+1}$ over $k$, we know that two semi-simple elements in $G_i(k)$ are conjugate if and only if they have the same characteristic polynomial. Secondly, since $G_i$ has simply connected derived subgroup, by the Lang--Steinberg theorem we know that any two semi-simple elements in $G_i(\BF_q)$ are $G_i(\BF_q)$-conjugate if and only if they are $G_i(k)$-conjugate (cf.~\S \ref{subsubsec:conn setting} and the proof of Lemma \ref{lem:compute TT}). The assertion now follows from part (2). 
	
	\textbf{(4)} Let $d = \deg Q$. Since $G_i$ has simply connected derived subgroup, we may use Lemma \ref{lem:compute TT} to compute $\TT (w_i, \gamma)$. We have $$ \TT(w_i,\gamma)  = \# \{ \gamma' \in T_i (\BF_q) ; \gamma ' = {}^x \gamma \text{ for some } x\in W_i \}. $$
	By (\ref{eq:Galois action, unitary}), it is clear that any $\gamma' \in T_i (\BF_q)$ with characteristic polynomial $Q^m$ must be of the form $ \gamma ' = \gamma (\Lambda ', \cdots, \Lambda ')$, for some admissible enumeration $\Lambda'$ of the $d$ roots of $Q$. There are $d$ such admissible enumerations (Lemma \ref{lem:odd deg}), and all of them correspond to elements in $T_i (\BF_q)$ by the proof of part (2). Moreover, it is clear that these $d$ resulting elements of $T_i(\BF_q)$ are in the same $W_i$-orbit. Hence $\TT(w_i,\gamma) = d$. 
\end{proof}

\begin{lemma}\label{lem:counting flags, unitary} Let $g\in G(\BF_q) \cap G^{\mathrm{reg}}$. For each $1\leq i \leq n+1$, let $\mathcal M_i^g$ be as in \S \ref{subsec:combining results}. We have a bijection 
	\begin{align*}
	\mathcal M_i ^g  & \isom  \set{ U\in \BF_{q^2} [\lambda] ^{\mathrm{monic}} ; \deg U = i-1, UU^* \text{ divides } f_g \text{ in }\BF_{q^2} [\lambda]} \\ 
	r P_i(\BF_q) & \mapsto f_{\pi_i' (r\i g r)} .
	\end{align*}
\end{lemma}
\begin{proof} The proof is completely analogous to Lemma \ref{lem:counting flags}, based on the fact that all $(i-1)$-dimensional totally isotropic $\BF_{q^2}$-subspaces of $\mathbb V$ are in the same $G(\BF_q)$-orbit. \end{proof}
\begin{theorem}\label{thm:unitary} Let $g \in G(\BF_q) \cap G^{\mathrm{reg}}$. We use the notations in Definition \ref{defn:MMM}. For each $Q \in \mathsf{SR}_2$, we simply write $m_Q$ for $m_Q(f_g)$. The following statements hold.
	\begin{enumerate}
		\item If $\tr (g,J ,\mathscr L) \neq 0$, then there is a unique element $Q_0 \in \mathsf {SR}_2 $ such that $m_{Q_0}$ is odd.
		\item Assume there is a unique element $Q_0 \in \mathsf {SR}_2$ such that $m_{Q_0}$ is odd. Then 
		$$\tr (g, J,\mathscr L ) = \deg Q_0 \frac{m_{Q_0}+1}{2}\MMM_2(f_g).$$
	\end{enumerate}
\end{theorem}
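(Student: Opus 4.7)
The plan is to follow essentially the same strategy used in the proofs of Theorems \ref{thm:RZO case}, \ref{thm:odd ortho}, and \ref{thm:symp}, applying the general character formula of Theorem \ref{thm:char formula} together with the description of $T_i(\BF_q)$ in Theorem \ref{thm:about Ti, unitary} and the parametrization of $\mathcal{M}_i^g$ from Lemma \ref{lem:counting flags, unitary}. Since $G \cong \GL(V)$ over $k$, the hypothesis $g \in G^{\mathrm{reg}}$ is equivalent to $g \in \GL(V)^{\mathrm{reg}}$, which places us inside the setup needed for Lemma \ref{lem:counting flags, unitary}. A conceptual simplification compared to the orthogonal cases is that $\dim V_i = 2(n+1-i)+1$ is always odd; any characteristic polynomial $f_{\pi_i(r^{-1}gr)} = Q^m$ arising from a stratum therefore forces both $\deg Q$ and $m$ to be odd, and no separate analysis of the eigenvalues $\pm 1$ is required.

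For part (1), assume $\tr(g, J, \mathscr L) \neq 0$ and choose a triple $(i, \gamma, rP_i(\BF_q))$ with $rP_i(\BF_q) \in \mathcal{M}_i^{g,\gamma}$ contributing a nonzero term to the character formula. Theorem \ref{thm:about Ti, unitary}(1) gives $f_{\pi_i(r^{-1}gr)} = Q^m$ for some $Q \in \mathsf{SR}_2$ and some $m \geq 1$; from $m \deg Q = 2(n+1-i)+1$ both factors are odd. Lemma \ref{lem:counting flags, unitary} then yields $f_g = Q^m \cdot UU^*$ for some monic $U \in \BF_{q^2}[\lambda]$, and since $UU^*$ has even multiplicity at every element of $\mathsf{SR}_2$, this $Q$ is the unique element of $\mathsf{SR}_2$ with odd multiplicity in $f_g$; in particular it is independent of the triple chosen.

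For part (2), assume the unique $Q_0 \in \mathsf{SR}_2$ with $m_{Q_0}$ odd exists, and set $d_0 := \deg Q_0$. A parity check using $\deg f_g = 2n+1$ and the formula $\deg f_g \equiv m_{Q_0} d_0 \pmod{2}$ forces $d_0$ odd. Define
$$\mathscr I := \{i : 1 \leq i \leq n+1,\ m_i := (2(n+1-i)+1)/d_0 \text{ is an odd integer with } 1 \leq m_i \leq m_{Q_0}\}.$$
Then $i \mapsto m_i$ is a bijection $\mathscr I \isom \{1, 3, \ldots, m_{Q_0}\}$, giving $|\mathscr I| = (m_{Q_0}+1)/2$. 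Arguing as in the previous three theorems, Theorem \ref{thm:about Ti, unitary}(3) (whose full force is available because $G_i^{\mathrm{der}}$ is simply connected, so $G_i(\BF_q)$-conjugacy of semi-simple elements coincides with $G_i(k)$-conjugacy, cf.~Remark \ref{rem:symp auto conn}) yields a bijection between the contributing triples $(i, \gamma, rP_i(\BF_q))$ of Theorem \ref{thm:char formula} and the pairs $(i, rP_i(\BF_q))$ with $i \in \mathscr I$ and $f_{\pi_i(r^{-1}gr)} = Q_0^{m_i}$. Using $\TT(w_i, \gamma) = d_0$ from Theorem \ref{thm:about Ti, unitary}(4), Lemma \ref{lem:counting flags, unitary} to replace the count of such pairs by the count of monic $U \in \BF_{q^2}[\lambda]$ with $UU^* = f_g/Q_0^{m_i}$, and the second statement of Lemma \ref{lem:MMM}, the character formula collapses to
$$\tr(g, J, \mathscr L) = d_0 \cdot |\mathscr I| \cdot \MMM_2(f_g) = \deg Q_0 \cdot \frac{m_{Q_0}+1}{2} \cdot \MMM_2(f_g),$$
which is the desired identity (with $\MMM_2$ in place of $\MMM$, as befits $\BF_{q^2}$-polynomials).

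The main subtlety, to the extent there is one, is verifying the bijection above: one must check that whenever $i \in \mathscr I$ and $rP_i(\BF_q) \in \mathcal{M}_i^g$ satisfies $f_{\pi_i(r^{-1}gr)} = Q_0^{m_i}$, the element $r$ lies in $\mathcal{M}_i^{g,\gamma}$ for some (automatically unique) $\gamma \in \Gamma_i$, so that it is indeed counted on the left side of Theorem \ref{thm:char formula}. This reduces to Theorem \ref{thm:about Ti, unitary}(3) together with the connectedness of the centralizer of a semi-simple element in $G_i$, both of which have already been established; the remainder of the argument is mechanical and parallels the orthogonal and symplectic proofs verbatim.
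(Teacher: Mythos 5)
Your proposal is correct and follows essentially the same route as the paper's own proof: part (1) via Theorem \ref{thm:about Ti, unitary}(1) and Lemma \ref{lem:counting flags, unitary}, and part (2) via the set $\mathscr I$, the bijection of contributing triples based on Theorem \ref{thm:about Ti, unitary}(3), the value $\TT(w_i,\gamma)=\deg Q_0$ from Theorem \ref{thm:about Ti, unitary}(4), and Lemma \ref{lem:MMM}. Your remark that the answer should be read as $\MMM_2(f_g)$ (since $f_g\in\BF_{q^2}[\lambda]$) matches the paper's usage elsewhere, e.g.\ in Theorem \ref{thm:app to AFL}.
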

\begin{proof}
	We apply Theorem \ref{thm:char formula} to compute $\tr (g, J ,\mathscr L)$ in the following.
	
	We prove part (1). Assume $\tr(g, J,\mathscr L) \neq 0$. Then there exist $1 \leq i \leq n+1$ and $\gamma \in \Gamma_i$ such that  $\mathcal M_i^{g,\gamma} \neq \emptyset$. Take $rP_i (\BF_q) \in \mathcal M_i^{g,\gamma}$. Then by Theorem \ref{thm:about Ti, unitary} (1), we have $f_{\pi_i (r\i g r)}= Q ^{m}$, for some $Q \in \mathsf {SR}_2$ and some positive integer $m$. In particular $m$ is odd because $Q^m$ has odd degree. By Lemma \ref{lem:counting flags, unitary}, we have $f_g = Q ^m UU^*$ for some $U \in \BF_{q^2} [\lambda]^{\mathrm{monic}}$. It then follows that $Q$ is the unique element of $\mathsf {SR}_2$ such that $m_Q$ is odd. Part (1) is proved.
	
	We now prove part (2). Since $f_g$ has odd degree, it immediately follows from the hypothesis that $\deg Q_0$ is odd.  
	Define $$\mathscr I : =\{i ; 1\leq i \leq n+1, \frac{2(n+1-i) +1 }{\deg Q_0} \text{ is a (necessarily odd) integer }\leq m_{Q_0} \}. $$
	For $i \in \mathscr I$, define $m_i : = [2(n+1-i) +1]/\deg Q_0.$ Note that $i\mapsto m_i$ is a bijection $\mathscr I \to \{1,3,5,\cdots,m_{Q_0} \}.$
	In particular $|\mathscr I | = (m_{Q_0}+1)/2$. Similar to the bijection (\ref{eq:bij}), we obtain a bijection 
	\begin{align}\label{eq:bij, unitary}
	&	\set {(i,\gamma, r P_i(\BF_q)) ; 1\leq i \leq n+1, \gamma \in \Gamma_i, r P_i(\BF_q) \in \mathcal M_i^{g,\gamma}}  \isom  \\ & \nonumber \set {(i, rP_i(\BF_q)) ; i \in \mathscr I,  r P_i(\BF_q)\in \mathcal M_i^g, f_{\pi_i (r\i g r )} = Q_0^{m_i} } \\ & \nonumber (i,\gamma, rP_i(\BF_q))   \mapsto (i,rP_i(\BF_q))
	\end{align}
	based on Theorem \ref{thm:about Ti, unitary} (3). We also note that if $(i,\gamma, rP_i(\BF_q))$ is in the left hand side of (\ref{eq:bij, unitary}), then $f_{\gamma} = Q_0 ^{m_i}$, and so by Theorem \ref{thm:about Ti, unitary} (4) we have 
	\begin{align}\label{eq:d_0, unitary}
	\TT(w_i,\gamma) = \deg Q_0.
	\end{align} The rest of the proof is identical to the proof of Theorem \ref{thm:RZO case} (3), based on (\ref{eq:bij, unitary}), (\ref{eq:d_0, unitary}), and Lemma \ref{lem:counting flags, unitary}. \end{proof}
\section{Application to arithmetic intersection}\label{sec:application}
In this section we apply Theorem \ref{thm:unitary} to prove the arithmetic fundamental lemma in the minuscule case, generalizing the main result of \cite{RTZ} and \cite{AFL}. We also apply Theorem \ref{thm:RZO case} to compute certain arithmetic intersection in GSpin Rapoport--Zink spaces, generalizing the main result of \cite{RZO}.  
\subsection{The arithmetic fundamental lemma in the minuscule case}\label{subsec:AFL}
We follow the notation of \cite{RTZ} and \cite{AFL}. Let $p$ be an odd prime. Let $F$ be a finite extension of $\mathbb{Q}_p$ with residue field $\mathbb{F}_q$ and a uniformizer $\pi$. As usual we denote $k: = \overline{ \BF}_q$. Let $E/F$ be a quadratic unramified extension. Let $\breve E$ be the completion of the maximal unramified extension of $E$. Let $S =\Spf \mathcal{O}_{\breve E}$. Fix an integer $n\geq 2$. Let $\RZ_{n}$ be the \emph{unitary Rapoport--Zink space of signature $(1,n-1)$}, which is a formal scheme over $S$ parameterizing deformations up to quasi-isogeny of height $0$ of unitary $\mathcal O_F$-modules of signature $(1,n-1)$. For details on $\RZ_n$ see \cite{Kudla2011}, \cite{Mih}, and \cite{Cho2018}. 

Let $C_{n}$ be a non-split Hermitian space of dimension $n$, for the quadratic extension $E/F$. Here non-split means that the discriminant has odd valuation. We identify $C_n$ with the space of special quasi-homomorphisms for the framing object in the moduli problem of $\RZ_n$, see \cite{Kudla2011} for $F=\QQ_p$ (cf.~\cite[\S 2.2, \S 2.3]{AFL}), and \cite{Cho2018} for general $F$. Similarly, we form $\RZ_{n-1}$ and $C_{n-1}$. We identify $C_{n-1}$ with the orthogonal complement in $C_n$ of a fixed vector $u\in C_n$ of norm $1$, thus $C_n = C_{n-1}\oplus  E u$. We have a natural closed immersion $$\delta : \RZ_{n-1} \hookrightarrow \RZ_n.$$ In fact $\delta$ identifies $\RZ_{n-1}$ with the special divisor in $\RZ_n$ associated to $u$, see \cite{Kudla2011} for $F=\QQ_p$, and see \cite{Cho2018} for general $F$. 

The unitary group $J(F):  = \UU (C_n)(F)$ acts on $\RZ_n$. Let $g \in J(F)$. Define 
$$L(g): = \mathcal O_E \cdot u + \mathcal O_E \cdot gu + \cdots +\mathcal O_E \cdot g^{n-1} u \subset  C_n. $$ Throughout we make two assumptions on $g$. Firstly, we assume that $g$ is \emph{regular semi-simple minuscule}, in the sense that $L(g)$ is a full-rank $\mathcal O_E$-lattice in $C_n$ satisfying 
$$\pi L(g) ^{\vee} \subset L(g) \subset L(g) ^{\vee}. $$ Secondly, we assume that $g$ has non-empty fixed points in $\RZ_n(k)$. By \cite[\S 5]{RTZ}, our second assumption implies that both $L(g)$ and $L(g)^{\vee}$ are stable under $g$.

Define $\mathbb V : = L(g) ^{\vee}/L(g)$. This is an odd-dimensional vector space over $\BF_{q^2}$, with a natural structure of a Hermitian space, see \cite[\S 2.4]{AFL}. Let $\mathcal V: = \mathcal V(L(g)^\vee)$ be the smooth projective generalized Deligne--Lusztig variety associated to the vertex lattice $L(g)^\vee$ as in \cite{Vollaard2010} and \cite{Vollaard2011}. (These references assume $F =\QQ_p$, but see \cite{Cho2018} for general $F$.) The finite group $\UU (\mathbb V)(\BF_q)$ naturally acts on $\mathcal V$. 	Let $\mathbb G = \UU(\mathbb V)$, $G=\mathbb G_k$, and let $(J,\mathscr L)$ be the $\s$-unbranched datum for $\mathbb G$ specified in \S \ref{subsec:odd unitary}.
\begin{lemma}\label{lem:identify V}
	The variety $\mathcal V$ is $\mathbb G(\BF_q)$-equivariantly isomorphic to $\overline{X_{J, w_1}}$. 
\end{lemma}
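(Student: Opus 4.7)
The plan is to identify both sides with the same closed subvariety of a partial flag variety of $\mathbb{G}$, by extracting the relevant information from the Vollaard--Wedhorn description of $\mathcal{V}$.

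First, I would recall the explicit description of $\mathcal{V} = \mathcal{V}(L(g)^\vee)$. By \cite{Vollaard2010, Vollaard2011} in the case $F = \mathbb{Q}_p$, and by \cite{Cho2018} in general, $\mathcal{V}$ is naturally identified with a closed subvariety of the Grassmannian of $n$-dimensional totally isotropic $k$-subspaces of $V = \mathbb{V} \otimes_{\mathbb{F}_{q^2}} k$, and more precisely with the closure in $G/P$ of a generalized Deligne--Lusztig variety $X_P(w) = \{ hP \in G/P : h^{-1}\sigma(h) \in P \cdot_\sigma BwB \}$ for an explicit standard parabolic $P$ and an explicit double coset $w \in W_P \backslash W / W_P$. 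This $\mathbb{G}(\mathbb{F}_q)$-equivariant identification is tautological from Vollaard's moduli-theoretic construction and the action of $\mathbb{G}(\mathbb{F}_q)$ on both sides.

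Second, I would match the parabolic and the element with those coming from our $\sigma$-unbranched datum. In the notation of \S\ref{subsec:odd unitary}, the partial flag variety of maximal totally isotropic subspaces of $V$ for $\mathbb{G} = \mathrm{U}_{2n+1}$ is precisely $G/P_J$ with $J = \mathbb{S} - \{s_n\}$, so $P = P_J$. The element $w$ appearing in the Vollaard--Wedhorn description corresponds, after the identification in \cite[Theorem A]{GH}, to the unique maximal element of the set $\mathrm{EO}^K_{\sigma,\mathrm{cox}}$ for the enhanced Tits datum of type $({}^2 A'_{2n}, \omega_1^\vee, \mathbb{S})$. By the computation in \cite[\S 6]{GH} and by Table \ref{tab:coxeter}, its minimal-length representative in ${}^J W$ is exactly the element $w_1 = s_n s_{n-1} \cdots s_1$ associated to our $\sigma$-unbranched datum $(J, \mathscr{L})$.

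Third, with $P = P_J$ and $w = w_1$ matched, the desired isomorphism $\mathcal{V} \cong \overline{X_{J,w_1}}$ follows immediately, and the $\mathbb{G}(\mathbb{F}_q)$-equivariance transfers automatically. The main obstacle is purely notational: one must carefully align the conventions of \cite{Vollaard2011}, \cite{Cho2018}, and \cite{GH} with the numbering of simple reflections in \S\ref{subsec:odd unitary}, and check that the isomorphism from the Vollaard--Wedhorn picture lands in the correct partial flag variety (rather than the one attached to $J' = \mathbb{S} - \{s_{n'}\}$ for some other $n'$, which would differ by the outer automorphism of the Dynkin diagram of type $A_{2n}$). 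Once this bookkeeping is done, no further geometric input is needed.
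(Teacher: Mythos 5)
Your overall strategy --- quote the Vollaard/Cho description of $\mathcal V$ and then match the parabolic and the Weyl group element with $(P_J,w_1)$ --- is the same as the paper's, but the step you dismiss as ``tautological'' or ``purely notational'' is exactly where the content of the lemma lies, and as written it is a gap. What \cite[Theorem 2.15]{Vollaard2010} provides is a description of $\mathcal V$ as the closure in $G/P_J$ of the \emph{image} of the classical Coxeter Deligne--Lusztig variety $X_{w_1}\subset G/B$ under the projection $G/B\to G/P_J$ (its strata are images of classical Deligne--Lusztig varieties for various parabolics); it is not stated as the closure of the fine variety $X_{J,w_1}$, whose definition uses the $\sigma$-conjugation condition $g^{-1}\sigma(g)\in P_J\cdot_\sigma Bw_1B$. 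Your first step in fact conflates the two possible readings: you write ``the generalized Deligne--Lusztig variety $X_P(w)$'' for a double coset $w\in W_P\backslash W/W_P$ but equip it with the fine condition $P\cdot_\sigma BwB$. These are not interchangeable: for the odd unitary group $J=\BS-\{s_n\}$ is not even $\sigma$-stable, so the coarser condition $h^{-1}\sigma(h)\in P_JwP_J$ is not well defined on $G/P_J$ without modification, and in general a coarse stratum is a union of several fine strata, so an equality of closures would itself require the closure relations of Theorem \ref{thm:closure}. Consequently, after matching $(P,w)=(P_J,w_1)$, the assertion that ``the desired isomorphism follows immediately'' and ``no further geometric input is needed'' skips precisely the comparison the lemma is recording.

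The missing bridge is short but must be said: since $\Sigma_1^{\sharp}=\emptyset$ and $P_1=G$, Proposition \ref{prop:inductive} with $i=1$ identifies $X_{J,w_1}$ with the image of $X_{w_1}$ under $G/B\to G/P_J$ (alternatively, one checks directly from the definition of $\cdot_\sigma$ that a coset $gP_J$ lies in $X_{J,w_1}$ if and only if it is the image of some $g'B\in X_{w_1}$). Granting this, Vollaard's theorem gives $\mathcal V=\overline{X_{J,w_1}}$, and the $\mathbb G(\BF_q)$-equivariance is immediate; this is exactly the paper's proof. A secondary remark: your detour through \cite[Theorem A]{GH} and the Ekedahl--Oort classification to pin down $w$ is not needed and would require its own compatibility check between that classification and Vollaard's stratification; Vollaard's top stratum is attached directly to the $\sigma$-twisted Coxeter element, so it matches $w_1$ without invoking \cite{GH}. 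Your caution about the diagram involution of type $A_{2n}$ (which node is removed) is a legitimate bookkeeping point, but it is subordinate to the fine-versus-classical identification above.
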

\begin{proof}
	Since $ G_1= P_1=G$, by Proposition \ref{prop:inductive} we have an isomorphism 
	\begin{align*}
	X_{w_1}& \isom X_{J,w_1} \subset G/P_J \\  gB & \mapsto gP_J,
	\end{align*}
	where $X_{w_1}$ is the classical Deligne--Lusztig variety associated to $w_1$ in the full flag variety $G/B$. The lemma then follows from \cite[Theorem 2.15]{Vollaard2010}, which asserts that $\mathcal V$ is also the closure in $G/P_J$ of the image of $X_{w_1}$. (Again, the reference \cite{Vollaard2010} assumes $F=\QQ_p$ and $\BF_q = \BF_p$, but the result \cite[Theorem 2.15]{Vollaard2010} easily generalizes.)
\end{proof}
The action of $g$ on $\mathbb V$ defines an element $\bar g\in \mathbb G (\BF_q)$. We also know that $\bar g$ is regular, because $\mathbb V$ is a cyclic $\BF_{q^2}[\bar g]$-module. Let $f = f_{\bar g}\in \BF_{q^2}[\lambda]$ be the characteristic polynomial of $\bar g$. Thus $f$ is self-reciprocal. We use the notations in Definition \ref{defn:MMM}.

\begin{theorem}\label{thm:app to AFL} As before, assume $g\in J(F)$ is  regular semi-simple minuscule, such that $\RZ_{n}^g\neq \emptyset$. The following statements hold.
	\begin{enumerate}
		\item The formal scheme $\delta (\RZ_{n-1}) \cap \RZ_n ^{g}$ over $S$ is a $k$-scheme.
		\item The $k$-scheme $\delta (\RZ_{n-1}) \cap \RZ_n ^{g}$ is non-empty if and only if there is a unique element $Q_0 \in \mathsf{SR}_2$ with $m_{Q_0}(f)$ odd. In this case, $\delta (\RZ_{n-1}) \cap \RZ_n ^{g}$ has finitely many $k$-points, and is in particular Artinian, and moreover $\Int(g)$ is equal to the total $k$-length of $\delta (\RZ_{n-1}) \cap \RZ_n ^{g}$.
		\item Assume there is a unique element $Q_0 \in \mathsf{SR}_2$ with $m_{Q_0}(f)$ odd. Then the total $k$-length of $\delta (\RZ_{n-1})\cap \RZ_n ^{g}$ is equal to 
		$$ \deg (Q_0) \frac{m_{Q_0}(f)+1}{2} \MMM_2 (f).$$
	\end{enumerate}
\end{theorem}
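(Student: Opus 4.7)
The plan is to translate the arithmetic intersection problem into a Lefschetz trace computation and then invoke the character formula Theorem~\ref{thm:unitary}. The first step will be to apply \cite[Proposition 4.1.2]{AFL}, which in the minuscule case provides an identification
\[
\delta(\RZ_{n-1}) \cap \RZ_n^g \;\cong\; \mathcal V^{\bar g}
\]
of formal schemes over $S$. Since the right-hand side is a $k$-scheme, this already yields (1). Combining with Lemma~\ref{lem:identify V}, computing the length of $\mathcal V^{\bar g}$ becomes computing the length of the fixed-point scheme of $\bar g$ on $\overline{X_{J,w_1}}$.

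Next, I will check that $\bar g$ is regular in $G = \mathbb G_k \cong \GL(V)$. Because $\mathbb V$ is a cyclic $\BF_{q^2}[\bar g]$-module, the minimal and characteristic polynomials of $\bar g$ on $\mathbb V$ agree, so each eigenspace of $\bar g$ on $V$ is one-dimensional. Proposition~\ref{prop:GL reg} then gives $\bar g \in G^{\mathrm{reg}}$. Consequently $\bar g$ has only finitely many fixed points on the partial flag variety $G/P_J$, so $\mathcal V^{\bar g}$ is $0$-dimensional, proving the Artinian claim in (2).

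Since $\bar g$ has finite order in $\mathbb G(\BF_q)$ and $\mathcal V$ is smooth and projective with isolated fixed points under $\bar g$, the Lefschetz trace formula for finite-order automorphisms will give
\[
\mathrm{length}_k(\mathcal V^{\bar g}) \;=\; \tr(\bar g, \coh^*(\mathcal V)) \;=\; \tr(\bar g, J, \mathscr L),
\]
where the second equality uses Lemma~\ref{lem:identify V}. The final step is then to apply Theorem~\ref{thm:unitary} to $\bar g$: its characteristic polynomial on $V$ is $f$, and the length above is a non-negative integer, so $\mathcal V^{\bar g}$ is non-empty if and only if the trace is strictly positive. By Theorem~\ref{thm:unitary}(1)--(2) this occurs precisely when there is a unique $Q_0 \in \mathsf{SR}_2$ with $m_{Q_0}(f)$ odd, and in that case the trace---hence the length---equals $\deg(Q_0)\cdot \tfrac{m_{Q_0}(f)+1}{2}\cdot \MMM_2(f)$, establishing (2) and (3).

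All the genuinely hard analysis has been relegated to the character formula Theorem~\ref{thm:unitary}; here the only nontrivial inputs are the formal-to-geometric identification from \cite{AFL} and the Lefschetz formula for finite-order automorphisms. If anything, the most delicate point is the assertion that the local term at each isolated fixed point of $\bar g$ on the smooth variety $\mathcal V$ equals the length of the scheme-theoretic fixed locus at that point, but this is a standard consequence of the Lefschetz--Verdier formula for finite-order automorphisms and requires no further work.
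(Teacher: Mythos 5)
Your overall strategy is the paper's: identify $\delta(\RZ_{n-1})\cap\RZ_n^{g}$ with $\mathcal V^{\bar g}$ via \cite[Proposition 4.1.2]{AFL}, pass to $\overline{X_{J,w_1}}$ by Lemma \ref{lem:identify V}, equate the $k$-length of the Artinian fixed scheme with $\tr(\bar g,\coh^*(\mathcal V))$, and conclude by Theorem \ref{thm:unitary}. Where you genuinely diverge is part (2): the paper simply quotes \cite[Proposition 8.1 (1)]{RTZ}, whereas you deduce finiteness from regularity of $\bar g$ (cyclic-module argument, Proposition \ref{prop:GL reg}, then Theorem \ref{thm:steiberg crit}: any parabolic of type $J$ containing $\bar g$ contains a Borel containing $\bar g$, and there are only finitely many such Borels), and then obtain the non-emptiness criterion from Theorem \ref{thm:unitary}(1) together with the strict positivity of the formula in Theorem \ref{thm:unitary}(2). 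This is a valid and more self-contained route, and it is not circular, since Artinian-ness is established before, and independently of, the length-equals-trace step.

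Two points need repair. First, the theorem is stated for an arbitrary finite extension $F/\QQ_p$, but \cite[Proposition 4.1.2]{AFL}, and more generally the theory of special divisors and the Bruhat--Tits stratification it rests on, is only available in the sources you cite for $F=\QQ_p$; the paper explicitly invokes \cite{Cho2018} to extend these inputs to general $F$, and without such a remark your argument only proves the case $F=\QQ_p$. Second, your closing assertion that the key local computation is ``a standard consequence of the Lefschetz--Verdier formula for finite-order automorphisms'' is not an adequate justification: the finite-order hypothesis is beside the point, and the results that are genuinely special to finite-order automorphisms control the trace through the \emph{reduced} fixed locus, which is exactly what fails here when $p$ divides the order of $\bar g$ (compare the paper's example of $x\mapsto x+1$ on $\mathbb P^1$, where the single fixed point contributes with multiplicity $2$). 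What is actually needed, and what the paper supplies, is elementary: the cycle-class Lefschetz formula \cite[Corollaire 3.7]{CYCLE} identifies $\tr(\bar g,\coh^*(\mathcal V))$ with the intersection number of the diagonal and the graph of $\bar g$ in $\mathcal V\times_k\mathcal V$, and since these are smooth subvarieties of complementary dimension in a smooth projective variety meeting in an Artinian scheme, Serre's Tor formula applies with vanishing higher Tor terms, so each local multiplicity is the length of the corresponding local ring of $\mathcal V^{\bar g}$. With these two points filled in, your argument is complete.
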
 
\begin{proof} We temporarily assume that $F=\QQ_p$.
	Then part (1) follows from \cite[Proposition 4.1.2]{AFL} (cf.~\cite[(9.6), Theorem 9.4]{RTZ}). Part (2) is proved in \cite[Proposition 8.1 (i)]{RTZ} and \cite[Proposition 4.2 (iii)]{RTZ}.
	
	For part (3), we first apply \cite[Proposition 4.1.2]{AFL} to identify $ \delta (\RZ_{n-1}) \cap \RZ_n ^{g}$ with $\mathcal V^{\bar g}$, the scheme theoretic fixed points of $\mathcal V$ under $\bar g\in \mathbb G (\BF_q)$.
	By part (2), $\mathcal V^{\bar g}$ is an Artinian scheme. Since $\mathcal V$ is smooth over $k$ and since $\mathcal V^{\bar g}$ is Artinian, it is well known (see for instance \cite[p.~111]{SerreLA}) that the intersection multiplicities of the graph of identity and the graph of $\bar g$ in $\mathcal V \times_k \mathcal V$ are simply given by the lengths of the local rings of $\mathcal V^{\bar g}$, as the higher Tor terms vanish. 
	It then follows from the Lefschetz fixed point formula \cite[Corollaire 3.7]{CYCLE} that the $k$-length of $\mathcal V^{\bar g}$ is equal to $\tr(\bar g, \coh^*(\mathcal V))$.
	By Lemma \ref{lem:identify V}, the last number is equal to $\tr(\bar g, J, \mathscr L)$. Hence part (3) follows from Theorem \ref{thm:unitary} and the fact that $\bar g$ is regular.
	We have proved the theorem assuming $F=\QQ_p$.
	
	We now explain the proof when $F$ is an arbitrary finite extension of $\QQ_p$. In fact, the reason that the references \cite{RTZ} and \cite{AFL} assumed $F=\QQ_p$ was because two ingredients needed in the arguments depended on this assumption. The first is the theory of special cycles considered in \cite{Kudla2011}, and the second is the Bruhat--Tits stratification of the reduced subscheme of $\RZ_n$ into generalized Deligne--Lusztig varieties, worked out in \cite{Vollaard2010} and \cite{Vollaard2011}. Both of these ingredients have now been generalized to arbitrary $F$ in \cite{Cho2018}. Based on this, all the previous arguments carry over.\footnote{It should be pointed out that in \cite[\S 2.6]{AFL}, for a vertex lattice $\Lambda$ the notation $\RZ_{\Lambda}$ denotes the special cycle in $\RZ_n$ associated to $\Lambda ^{\vee}$. Thus a priori $\RZ_{\Lambda}$ is a formal scheme over $S$, but it is a theorem (\cite[Theorems 9.4, 10.1]{RTZ}, see also \cite[Corollary 3.2.3]{AFL}) that $\RZ_{\Lambda}$ is in fact a reduced scheme over $k$. This result plays a key role in \cite{RTZ} and \cite{AFL}, and its proof depends on Grothendieck--Messing theory. In contrast, in \cite{Vollaard2011} and \cite{Cho2018} the notation $\RZ_{\Lambda}$ is by definition a scheme over characteristic $p$. Thus the two notations agree only a posteriori.}
\end{proof}
\begin{remark}
	Theorem \ref{thm:app to AFL} (3) was previously proved in \cite{RTZ} and \cite{AFL}, under the assumption that $F= \QQ_p$ with $p> (m_{Q_0}+1)/2$. This assumption is removed in Theorem \ref{thm:app to AFL}. On the other hand, under the same assumption on $p$ the papers \cite{RTZ} and \cite{AFL} determine each local ring of $\delta (\RZ_{n-1}) \cap \RZ^{g}_n$. This is a result not revealed by the methods of the current paper. 
\end{remark}
\begin{corollary}\label{cor:AFL}
	The minuscule case of the arithmetic fundamental lemma conjecture \cite[Conjecture 7.4]{RTZ} (cf.~\cite[\S 1]{Rapoport2017}) holds.
\end{corollary}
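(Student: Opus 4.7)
The plan is to derive Corollary~\ref{cor:AFL} as an essentially formal consequence of Theorem~\ref{thm:app to AFL}, combined with the already-known explicit computation of the analytic side of (\ref{eq:AFL}) in the minuscule case from \cite[Proposition 8.2]{RTZ}. Since both sides of the AFL identity are local in nature and only the arithmetic-geometric side depends on deformation-theoretic input (which is handled by Theorem~\ref{thm:app to AFL}), all that remains is a numerical matching.

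First, I would fix a regular semi-simple minuscule $g \in J(F)$ and split into two cases. In the vanishing case, I would show that both sides of (\ref{eq:AFL}) are zero: on the geometric side, either $\RZ_n^g = \emptyset$ (giving $\Int(g)=0$ directly) or $\RZ_n^g \neq \emptyset$ but no unique self-reciprocal irreducible $Q_0 \in \mathsf{SR}_2$ with $m_{Q_0}(f)$ odd exists, in which case Theorem~\ref{thm:app to AFL}(2) forces $\delta(\RZ_{n-1})\cap \RZ_n^g = \emptyset$ and hence $\Int(g)=0$. The corresponding vanishing of $\partial\Orb(g, \mathbf{1}_{S_n(\mathcal O_F)})$ follows from the parity analysis of \cite[Proposition 8.1]{RTZ}, which expresses the vanishing purely in terms of the invariants $\inv(g)$ attached to $g$.

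Next, in the non-vanishing case, Theorem~\ref{thm:app to AFL}(3) gives
$$\Int(g) = \deg(Q_0)\cdot \frac{m_{Q_0}(f)+1}{2}\cdot \MMM_2(f),$$
where $f = f_{\bar g} \in \BF_{q^2}[\lambda]$ is the characteristic polynomial of the induced unitary operator $\bar g$ on $\mathbb V = L(g)^\vee/L(g)$. On the analytic side, \cite[Proposition 8.2]{RTZ} produces a matching explicit formula for $\omega(g)\cdot \partial\Orb(g, \mathbf{1}_{S_n(\mathcal O_F)})$ in terms of the same combinatorial data attached to $f$ via the dictionary that identifies $\inv(g)$ with the factorization type of $f$ for minuscule $g$. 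Comparing the two formulas, one obtains (\ref{eq:AFL}) up to the factor $-\log q$ dictated by the conjecture.

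The main point that requires care is the independence of the analytic-side computation from the assumption $F = \QQ_p$ and from any lower bound on the residual characteristic. This is not a serious obstacle, however: \cite[Proposition 8.2]{RTZ} is a pure orbital-integral calculation on the symmetric space $S_n$ and involves no $p$-divisible groups or Grothendieck--Messing theory, so its proof carries over verbatim for any finite extension $F/\QQ_p$ with odd residue characteristic. Thus the only input to Corollary~\ref{cor:AFL} beyond \cite{RTZ} is the removal of the $p \geq (n+1)/2$ constraint on the geometric side, which is precisely what Theorem~\ref{thm:app to AFL} accomplishes.
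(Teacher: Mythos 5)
Your proposal is correct and takes essentially the same route as the paper: both deduce the minuscule AFL by combining Theorem~\ref{thm:app to AFL} with the explicit analytic-side computation of \cite[Proposition 8.2]{RTZ}. The only step you gloss over (which the paper makes explicit) is that, because the intersection is Artinian by Theorem~\ref{thm:app to AFL}, the derived intersection number $\Int(g)$ coincides with the $k$-length of $\delta(\RZ_{n-1})\cap\RZ_n^g$ via \cite[Proposition 4.2 (iii)]{RTZ}, so that Theorem~\ref{thm:app to AFL}~(3) really does compute $\Int(g)$.
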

\begin{proof} This follows from the formula for the arithmetic intersection number $\Int(g)$ in Theorem \ref{thm:app to AFL} (2--3) and the explicit computation of the analytic side in \cite[Proposition 8.2]{RTZ}.
\end{proof}

\subsection{Arithmetic intersection on GSpin Rapoport--Zink spaces.}
We follow the notation of \cite{RZO}. Let $p$ be an odd prime, and fix an integer $n\geq 4$. Let $\RZO$ (resp.~$\RZO^\flat$) be the $\GSpin $ Rapoport--Zink space associated to a self-dual quadratic $\ZZ_p$-lattice of rank $n$ (resp.~$n-1$). We have a natural closed immersion $$\delta : \RZO^{\flat} \to \RZO $$
of formal schemes over $\Spf W(k)$. These are specific Hodge-type Rapoport--Zink spaces introduced by Howard--Pappas \cite{Howard2015}. Associated to the precise data used to define $\RZO^{\flat}$ and $\RZO$, we have a pair of quadratic spaces $V_K^{\flat,\Phi}$ and $V_K^{\Phi}$ over $\QQ_p$, and $V_K^{\flat,\Phi}$ can be identified with the orthogonal complement in $V_K^{\Phi}$ of a fixed vector $x_n\in  V_K^{\Phi}$ whose norm is $1$. (The triple $(V_K^{\flat,\Phi},V_K^{\Phi},x_n)$ is analogous to the triple $(C_{n-1}, C_n, u)$ in \S \ref{subsec:AFL}.) 

The group $J(\QQ_p) = \GSpin(V_K^{\Phi})(\QQ_p) $ acts on $\RZO$. As in \cite[\S 4.3]{Howard2015}, $\RZO$ is the disjoint union of open and closed formal subschemes $\RZO^{(l)}$, indexed by $l\in\ZZ$. The action of any $g\in J(\QQ_p)$ on $\RZO$ maps each $\RZO ^{(l)}$ isomorphically to $\RZO^{(l+l_g)}$, where $l_g$ is the $p$-adic valuation of the spinor norm of $g$ in $\QQ_p^\times$. We view $p$ as an element of $J(\QQ_p)$ by viewing it as an scalar in the $\GSpin$ group. Thus $p$ maps each $\RZO^{(l)}$ isomorphically to $\RZO^{(l+2)}$.

Let $g\in J(\QQ_p)$. Define 
$$L(g):= \ZZ_p \cdot x_n + \ZZ_p \cdot g x_n +\cdots \ZZ_p \cdot g^{n-1} x_n \subset V_{K}^{\Phi}.$$ Here $g$ acts on $V_K^{\Phi}$ via the natural map $\GSpin(V_K^{\Phi}) \to \SO(V_K ^{\Phi})$. Throughout we make two assumptions on $g$. Firstly, we assume that $g$ is \emph{regular semi-simple minuscule}, in the sense that $L(g)$ is a full-rank $\ZZ_p$-lattice in $V_K^{\Phi}$ satisfying 
$$p L(g) ^{\vee} \subset L(g) \subset L(g) ^{\vee}. $$ Secondly, we assume that $g$ has non-empty fixed points in $\RZO(k)$. By \cite[\S 3.6]{RZO}, our second assumption implies that both $L(g)$ and $L(g)^{\vee}$ are stable under $g$. It also directly follows from our second assumption that $l_g=0$. In particular $g$ stabilizes each $\RZO^{(l)}$.

Define $\mathbb V : = L(g) ^{\vee}/L(g)$. This is an even-dimensional, non-zero vector space over $\BF_{p}$, with a natural structure of a non-split quadratic space, see \cite[\S 2.7]{RZO}. Let $S = S_{L(g)^\vee}$ be the smooth projective $k$-variety associated to the vertex lattice $L(g)^\vee$ as in \cite[\S 5.3]{Howard2015}. The finite group $\mathrm O (\mathbb V)(\BF_p)$ naturally acts on $S$.
By \cite[Proposition 5.3.2]{Howard2015} and its proof, we know that $S$ has two connected components $S^+, S^-$, that the action of $\SO(\mathbb V)(\BF_p)$ on $S$ stabilizes each of $S^+, S^-$, and that any element of $\mathrm{O}(\mathbb V) (\BF_p)-\SO (\mathbb V)(\BF_p)$ interchanges $S^+,S^-$. Let $\mathbb G = \SO(\mathbb V)$, $G=\mathbb G_k$, and let $(J,\mathscr L)$ be the $\s$-unbranched datum for $\mathbb G$ specified in \S \ref{subsec:even ortho}. For definiteness, we fix the convention so that our $w_1$ corresponds to the Weyl group element $w^-$ in \cite[\S 3.2]{HPGU22}.\footnote{This is harmless because up to outer automorphism of $G$, our $w_1$ corresponds to either $w^-$ or $w^+$ in \cite[\S 3.2]{HPGU22}. All the arguments below are the same in the two cases.} 

\begin{lemma}\label{lem:identify S} The variety $S^-$ is $\mathbb G(\BF_q)$-equivariantly isomorphic to $\overline{X_{J, w_1}}$.
\end{lemma}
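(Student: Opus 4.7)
The plan is to mirror the proof of Lemma \ref{lem:identify V} in the unitary case, adapted to the even orthogonal setting. First, I would observe that since $\Sigma_1 = \emptyset$ we have $\mathbb P_1 = \mathbb G_1 = \mathbb G$, so Proposition \ref{prop:inductive} (applied with $i=1$) supplies a canonical $\mathbb G(\BF_p)$-equivariant isomorphism
\[
X_{w_1} \isom X_{J, w_1} \subset G/P_J, \qquad gB \mapsto g P_J,
\]
where $X_{w_1}$ is the classical Deligne--Lusztig variety in $G/B$ associated to $w_1 = s_1 s_2 \cdots s_{n-1}$. Passing to closures (and noting that the projection $G/B \to G/P_J$ is proper), it remains to identify $\overline{X_{J,w_1}}$ inside $G/P_J$ with $S^-$ as $\mathbb G(\BF_p)$-stable closed subvarieties.

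Next, I would invoke the explicit description of the varieties $S_{\Lambda}$ attached to vertex lattices $\Lambda$ in Howard--Pappas \cite[\S 5.3]{Howard2015}, refined by the Weyl-group interpretation in \cite[\S 3.2]{HPGU22}. These results realize each connected component of $S = S_{L(g)^\vee}$ inside one of the two connected components of the orthogonal Grassmannian of maximal isotropic subspaces of $\mathbb V \otimes_{\BF_p} k$; the latter two components can be identified respectively with $G/P_{\BS - \{s_{n-1}\}}$ and $G/P_{\BS - \{s_n\}}$. Under the convention that fixes our $w_1$ to correspond to the element $w^-$ of \cite[\S 3.2]{HPGU22}, the component $S^-$ sits inside $G/P_J$ (with $J = \BS - \{s_{n-1}\}$), and is precisely the Zariski closure there of the image of the classical Deligne--Lusztig variety $X_{w_1}$ under the projection $G/B \to G/P_J$.

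Combining the two paragraphs, the image of $X_{w_1}$ in $G/P_J$ under the projection is $X_{J, w_1}$ by the first step and has closure $S^-$ by the second, whence $S^- \cong \overline{X_{J,w_1}}$ as $\mathbb G(\BF_p)$-equivariant $k$-schemes.

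The genuine content of the proof, and the main obstacle, is not the identification of $X_{J, w_1}$ with a classical Deligne--Lusztig variety (which is purely formal given Proposition \ref{prop:inductive}), but rather the input from \cite{Howard2015, HPGU22} that $S^-$ is the closure in one designated component of the orthogonal Grassmannian of the Deligne--Lusztig variety associated to $w^-$. Once the convention matching our $w_1$ with $w^-$ has been fixed (as the paper does), the lemma follows. This is the exact orthogonal analogue of the use of Vollaard's \cite[Theorem 2.15]{Vollaard2010} in the unitary case, and the fact that $S$ has two components (as opposed to the single component appearing in the unitary setting) is the only structural difference, handled by the convention choice.
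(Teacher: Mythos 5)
Your proposal is correct and follows essentially the same route as the paper: apply Proposition \ref{prop:inductive} with $i=1$ (where $\mathbb P_1 = \mathbb G_1 = \mathbb G$) to identify $X_{J,w_1}$ with the classical Deligne--Lusztig variety $X_{w_1}$, and then invoke the Howard--Pappas description, in the form of \cite[Proposition 3.8]{HPGU22}, that $S^-$ is the closure in $G/P_J$ of the image of $X_{w_1}$ under the fixed $w^-$ convention. The paper's proof is exactly this two-step argument, so no further comment is needed.
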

\begin{proof} Since $ G_1= P_1=G$, by Proposition \ref{prop:inductive} we have an isomorphism 
	\begin{align*}
	X_{w_1}& \isom X_{J,w_1} \subset G/P_J \\ gB & \mapsto gP_J,
	\end{align*}
	where $X_{w_1}$ is the classical Deligne--Lusztig variety associated to $w_1$ in the full flag variety $G/B$. The claim then follows from \cite[Proposition 3.8]{HPGU22}, which asserts that $S^-$ (denoted by $\mathscr X^-$ in \textit{loc.~cit.}) is also the closure of the image of $X_{w_1}$ in $G/P_J$. 
\end{proof}

The action of $g$ on $\mathbb V$ defines an element $\bar g\in \mathrm{O} (\mathbb V)(\BF_p)$. The following result is implicitly assumed in \cite{RZO}, but is not explicitly stated and proved there. We give two proofs here, for the sake of completeness.
\begin{lemma}\label{lem:in SO}
	The element $\bar g \in \mathrm{O} (\mathbb V)(\BF_p)$ lies in $\SO (\mathbb V) (\BF_p)$.
\end{lemma}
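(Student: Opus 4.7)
The plan is to present two proofs, one geometric and one algebraic.

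For the geometric proof, the key input is the fact, extracted from the Bruhat--Tits stratification analysis of Howard--Pappas \cite{Howard2015}, that the two connected components $S^+$ and $S^-$ of $S = S_{L(g)^\vee}$ lie in distinct connected components $\RZO^{(l_+)}$ and $\RZO^{(l_-)}$ of $\RZO$, with $l_+ \neq l_-$. I would then invoke the fact (already recorded in the excerpt) that $l_g = 0$ under our assumption $\RZO^g \neq \emptyset$, so that $g$ preserves each $\RZO^{(l)}$ setwise, and in particular preserves $S^+$ and $S^-$ individually. Since the induced action on $S$ is given by $\bar g \in \mathrm{O}(\mathbb V)(\BF_p)$, and since any element of $\mathrm{O}(\mathbb V)(\BF_p) - \SO(\mathbb V)(\BF_p)$ interchanges $S^+$ and $S^-$, we would conclude $\bar g \in \SO(\mathbb V)(\BF_p)$.

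For the algebraic proof, I would proceed via the parahoric structure. Since $g \in \SO(V_K^\Phi)(\QQ_p)$ (via the natural map $\GSpin \to \SO$) stabilizes both $L$ and $L^\vee$, it defines a $\ZZ_p$-point of the parahoric group scheme $\mathcal P$ over $\ZZ_p$ whose generic fiber is $\SO(V_K^\Phi)_{\QQ_p}$ and whose $\ZZ_p$-points are the stabilizer of $L$ in $\SO(V_K^\Phi)(\QQ_p)$. For a vertex lattice of the type $pL^\vee \subset L \subset L^\vee$, the reductive quotient of the special fiber of $\mathcal P$ is canonically isomorphic to $\SO(\mathbb V) \times \SO(L/pL^\vee)$, where $L/pL^\vee$ carries the natural non-degenerate $\BF_p$-quadratic form obtained by reducing the pairing $L \times L \to \ZZ_p$ modulo $p$. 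Reducing $g$ modulo $p$ gives an $\BF_p$-point of this reductive quotient, and its projection to the first factor is precisely $\bar g$, which therefore lies in $\SO(\mathbb V)(\BF_p)$.

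The main obstacle in both approaches is the extra structural input needed beyond elementary arguments. A naive analysis using the short exact sequence $0 \to pL^\vee/pL \to L/pL \to L/pL^\vee \to 0$ of $g$-modules (together with $pL^\vee/pL \cong \mathbb V$ via multiplication by $p$) yields only the relation $\det \bar g \cdot \det(g|_{L/pL^\vee}) = 1$ in $\BF_p^\times$, which is compatible with both determinants being equal to $-1$. Excluding this possibility requires in the geometric approach the precise distribution of $S^+$ and $S^-$ across the components $\RZO^{(l)}$, and in the algebraic approach the identification of the reductive quotient of the relevant parahoric as a product of $\SO$-factors rather than $\mathrm{O}$-factors; both are standard but non-trivial facts about $\GSpin$ Rapoport--Zink spaces and their Bruhat--Tits buildings.
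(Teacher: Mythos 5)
Your geometric argument is, in substance, the paper's first proof of Lemma \ref{lem:in SO}: one uses the isomorphism $p^{\ZZ}\backslash \RZO^{\mathrm{red}}_{L(g)^\vee}\cong S$ of \cite[Theorem 6.3.1]{Howard2015} (checking that it intertwines $g$ with $\bar g$), the fact that $l_g=0$ forces $g$ to preserve each $\RZO^{(l)}$, hence by \cite[Corollary 6.3.2]{Howard2015} each of $S^{\pm}$, and then the fact that every element of $\mathrm{O}(\mathbb V)(\BF_p)-\SO(\mathbb V)(\BF_p)$ interchanges $S^{+}$ and $S^{-}$. Your phrasing that $S^{\pm}$ ``lie in'' distinct components $\RZO^{(l_{\pm})}$ is loose --- $S$ is a quotient of a subscheme of $\RZO$, and the correct statement is that the pieces $\RZO^{\mathrm{red}}_{L(g)^\vee}\cap\RZO^{(l)}$ are connected and the two components of $S$ correspond to the two parities of $l$ (the quotient by $p^{\ZZ}$ shifts $l$ by $2$) --- but the substance is the same and this part is fine.

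Your algebraic argument, however, has a genuine gap, and it sits exactly where the paper's second proof does its real work. Stabilizing $L=L(g)$ and $L^\vee$ only places the image $h$ of $g$ in the full Bruhat--Tits stabilizer of the corresponding vertex in $\SO(V_K^{\Phi})(\QQ_p)$, not in the parahoric: since $\SO$ is not simply connected, the stabilizer is in general strictly larger than the parahoric, the discrepancy being detected by the Kottwitz homomorphism, i.e.\ by the parity of the valuation of the spinor norm. It is only the parahoric whose special fiber has reductive quotient $\SO(\mathbb V)\times\SO(L/pL^\vee)$; the reduction of the full stabilizer does not land in this product. Concretely, pick $e\in L$ with $[e,e]\in\ZZ_p^{\times}$ and $w\in L^\vee$ with $p[w,w]\in\ZZ_p^{\times}$ (both exist since the induced forms on $L/pL^\vee$ and on $\mathbb V$ are non-degenerate), and set $h_0=\tau_e\tau_{pw}\in\SO(V_K^{\Phi})(\QQ_p)$. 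Then $h_0$ stabilizes $L$ and $L^\vee$, but it acts on $\mathbb V=L^\vee/L$ as the reflection in the image of $w$, so with determinant $-1$; its spinor norm has odd valuation. Moreover, being in the image of $\GSpin(V_K^{\Phi})(\QQ_p)\to\SO(V_K^{\Phi})(\QQ_p)$ carries no information, since that map is surjective by Hilbert 90. So the hypotheses you actually invoke in the algebraic argument cannot imply the conclusion: you must use $l_g=0$ (equivalently, that the spinor norm of $h$ has even valuation), which your argument never does. This is precisely the content of the paper's Lemma \ref{lem:spinor norm}: write $h$ as a product of reflections $\tau_{v_j}$ with $v_j\in L$ stabilizing $L$ (via \cite[Theorem 5.3.3]{kitaoka}), note that $\tau_{v_j}$ acts trivially on $L^\vee/L$ when $[v_j,v_j]$ is a unit and as a reflection when $[v_j,v_j]\in p\ZZ_p^{\times}$, and use the even valuation of the spinor norm to conclude that the number of reflections of the second kind is even. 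Your closing diagnosis --- that the missing input is the identification of the reductive quotient of the parahoric --- misses the point: that identification is standard, but it is of no use until you have shown that $h$ lies in the parahoric rather than merely in the lattice stabilizer, and that step is exactly where $l_g=0$ must enter.
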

\begin{proof}
	\textbf{First proof.} Let $S = S_{ L(g)^{\vee}}$ be as before. By \cite[Theorem 6.3.1]{Howard2015}, we have an isomorphism $p^\ZZ\backslash \RZO_{L(g) ^{\vee}}^{\mathrm{red}} \isom  S$, where $\RZO_{L(g) ^{\vee}}^{\mathrm{red}}$ is a certain $g$-stable subscheme of $\RZO$. It is easy to see that this isomorphism intertwines the action of $g$ on the left and the action of $\bar g$ on the right, for example by checking the statement on $k $-points. Since $g$ stabilizes each $\RZO^{(l)}$, by \cite[Corollary 6.3.2]{Howard2015} we know that $g$ stabilizes each of the two connected components of $p^{\ZZ}\backslash \RZO_{\Lambda}^{\mathrm{red}}$. Therefore $\bar g$ stabilizes each of the two connected components of $S$. By the proof of \cite[Proposition 5.3.2]{Howard2015}, any element of $\mathrm{O}(\mathbb V) (\BF_p)-\SO (\mathbb V)(\BF_p)$ interchanges the two connected components of $S$. It then follows that $\bar g \in \SO (\mathbb V)$.
	
	\textbf{Second proof.} The result follows from Lemma \ref{lem:spinor norm} in the following, applied to $W:= V_K^{\Phi}$, $L:= L(g)$, and $h:= $ the image of $g$ under $\GSpin (V_K^{\Phi})(\QQ_p)\to \SO (V_K^{\Phi})(\QQ_p)$. The hypothesis on the spinor norm of $h$ is satisfied because $l_g=0$.
\end{proof}

\begin{lemma}\label{lem:spinor norm}Let $(W, [\cdot, \cdot])$ be a quadratic space over $\QQ_p$. Let $h \in \mathrm O(W)(\QQ_p)$ be an element whose spinor norm (see \cite[\S 1.6]{kitaoka}) in $\QQ_p^{\times}/\QQ_p^{\times,2}$ has even valuation. Let $L$ be a full-rank lattice in $W$ satisfying $pL^\vee \subset L \subset L^\vee$. Assume $L$ is stable under $h$. Then the induced action $\bar h$ of $h$ on the $\BF_p$-vector space $L^{\vee}/L$ has determinant $1$.
\end{lemma}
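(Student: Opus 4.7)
The plan is to prove the sharper identity $\det \bar h = (-1)^{v_p(\mathrm{spn}(h))}$, from which the lemma follows at once by the hypothesis on $v_p(\mathrm{spn}(h))$. Here $\mathrm{spn}: \mathrm{O}(W)(\QQ_p) \to \QQ_p^\times/\QQ_p^{\times,2}$ is the spinor norm, which is a group homomorphism with $\mathrm{spn}(r_v) \equiv [v,v] \pmod{\QQ_p^{\times,2}}$ for a reflection $r_v$: by Cartan--Dieudonn\'e, $h = r_{v_1}\cdots r_{v_k}$ implies $\mathrm{spn}(h) = \prod_i [v_i,v_i]$ modulo squares.

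The hypotheses $pL^\vee \subset L \subset L^\vee$ endow $\overline L := L/pL^\vee$ and $\mathbb V := L^\vee/L$ with non-degenerate $\BF_p$-quadratic forms (the form on $\mathbb V$ given by $p \cdot [\cdot,\cdot]$ modulo $p$). Since $L^\vee$ is intrinsic to $L$ and the form, any $h$ stabilizing $L$ also stabilizes $L^\vee$ and induces a homomorphism
\[
\psi: \mathrm{Stab}_{\mathrm{O}(W)(\QQ_p)}(L) \To \mathrm{O}(\overline L)(\BF_p) \times \mathrm{O}(\mathbb V)(\BF_p), \qquad h \mapsto (\bar h_1, \bar h_2).
\]
I will show $\psi$ is surjective with pro-$p$ kernel $K$. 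For surjectivity: given anisotropic $\bar v \in \overline L$, any lift $v \in L$ has $[v,v]\in \ZZ_p^\times$, and a direct check shows $r_v$ preserves $L$ with $\psi(r_v) = (r_{\bar v}, \id)$; given anisotropic $\bar v \in \mathbb V$, any lift $v \in L^\vee$ has $[v,v] \in p^{-1}\ZZ_p^\times$, so for every $x \in L$ the coefficient $2[x,v]/[v,v]$ lies in $p\ZZ_p$ and hence $r_v(x) - x \in pL^\vee \subset L$, showing $r_v$ preserves $L$ with $\psi(r_v) = (\id, r_{\bar v})$. By Cartan--Dieudonn\'e such reflections generate each factor. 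For $K$, I pick a $\ZZ_p$-basis $f_1,\ldots,f_n$ of $L^\vee$ with $L = \ZZ_p p f_1 \oplus \cdots \oplus \ZZ_p p f_\ell \oplus \ZZ_p f_{\ell+1} \oplus \cdots \oplus \ZZ_p f_n$; every element of $K$ reduces modulo $p$ to a unipotent matrix in this basis, so $K$ is an extension of a finite $p$-group by the principal congruence subgroup and is therefore pro-$p$.

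Finally, I compare the two continuous characters $\chi_1(h) := \det \bar h_2$ and $\chi_2(h) := (-1)^{v_p(\mathrm{spn}(h))}$ from $\mathrm{Stab}(L)$ to $\{\pm 1\}$. Both vanish on $K$: $\chi_1$ because $K$ acts trivially on $\mathbb V$ by definition, and $\chi_2$ because any continuous character from a pro-$p$ group into the $2$-group $\{\pm 1\}$ is trivial ($p$ being odd). They therefore descend to characters of $\mathrm{O}(\overline L) \times \mathrm{O}(\mathbb V)$, so it suffices to verify agreement on the generating reflections above. For a type-1 lift, $\mathrm{spn}(r_v) = [v,v] \in \ZZ_p^\times$ gives $\chi_2 = 1$ and $\chi_1 = \det \id = 1$; for a type-2 lift, $\mathrm{spn}(r_v) = [v,v]$ has valuation $-1$, giving $\chi_2 = -1 = \det r_{\bar v} = \chi_1$. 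Hence $\chi_1 \equiv \chi_2$, proving the sharper identity. The main technical obstacle is verifying that $K$ is pro-$p$; beyond that, everything reduces to the reflection-level computation.
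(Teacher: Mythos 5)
Your proof is correct, but it takes a genuinely different route from the paper. The paper's proof rests on the lattice-level Cartan--Dieudonn\'e theorem \cite[Theorem 5.3.3]{kitaoka}: since $h$ stabilizes $L$, it factors as a product of reflections $\tau_{v_j}$ with $v_j\in L$, each of which stabilizes $L$; after normalizing $v_j\in L-pL$, one checks that $[v_j,v_j]$ has valuation $0$ or $1$, that $\tau_{v_j}$ induces the identity (resp.\ a reflection) on $L^\vee/L$ accordingly, and that the parity of the number of valuation-$1$ factors equals the parity of $v_p$ of the spinor norm. You never decompose $h$ itself: instead you build the reduction homomorphism $\psi$ onto $\mathrm O(\overline L)(\BF_p)\times\mathrm O(L^\vee/L)(\BF_p)$, prove surjectivity by exhibiting reflection lifts (the same local computation the paper performs on the factors $\tau_{v_j}$, applied to chosen lifts instead), show $\ker\psi$ is pro-$p$, and compare the characters $h\mapsto\det\bar h$ and $h\mapsto(-1)^{v_p(\mathrm{spn}(h))}$ on generators of the finite quotient. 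The paper's argument is shorter but leans on the cited lattice theorem; yours avoids that input and makes explicit the sharper identity $\det\bar h=(-1)^{v_p(\mathrm{spn}(h))}$ for every $h$ stabilizing $L$ (which the paper's proof also yields, implicitly). Two small points to tighten: the continuity of the spinor-norm character should be justified (standard, since the kernel of the spinor norm on $\mathrm{SO}(W)(\QQ_p)$ contains the open image of the spin group), or bypassed altogether by noting that in a pro-$p$ group with $p$ odd every element is a square, so any homomorphism to $\{\pm1\}$ automatically kills $K$; and $K$ is an extension of a finite $p$-group by $K$ intersected with the principal congruence subgroup rather than by the full congruence subgroup --- harmless, since that intersection is still pro-$p$, which is all you need.
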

\begin{proof}
	Since $h$ stabilizes $L$, by \cite[Theorem 5.3.3]{kitaoka} we have $h=\tau_1\cdots \tau_m$, where each $\tau_j \in \mathrm{O}(W)(\QQ_p)$ is the reflection associated to an anisotropic vector $v_j\in L$ (namely $\tau_j(x)= x- 2[x,v_j][v_j,v_j]^{-1}v_j, \forall x\in W$), such that $\tau_j$ also stabilizes $L$. By rescaling, we may and shall assume that each $v_j\in L-pL$. We now fix $1\leq j\leq m$. 
	
	Since $\tau_j$ stabilizes $L$, we have $[x,v_j]\in [v_j,v_j]\ZZ_p$ for all $x\in L,$ or equivalently that \begin{align}\label{eq:vj}
	v_j\in [v_j,v_j]L^{\vee}.
	\end{align} Since $pL^\vee \subset L\subset L^{\vee}$ and $v_j\in L-pL$, it follows from (\ref{eq:vj}) that $[v_j,v_j]$ has valuation $0$ or $1$. If $[v_j,v_j]$ has valuation $0$, then $\tau_j$ maps each $x\in L^{\vee}$ into $x +\ZZ_p v_j \subset x+ L$, and so the image of $\tau_j$ in $\GL(L^\vee/L)$ is trivial. Assume $[v_j,v_j]$ has valuation $1$. Then $v_j\in pL^{\vee}$ by (\ref{eq:vj}), and so $v_j=pw_j$ for some $w_j\in L^\vee-L$. In this case we have \begin{align}
	\label{eq:vj val 1} \tau_j(x)=x-2\frac{p[x,w_j]}{p[w_j, w_j]} w_j ,\quad \forall x\in L.
	\end{align} Now the map 
	\begin{align*}
	L^\vee\times L^\vee & \to \BF_p \\ (x,y) & \mapsto p[x,y]\mod p\end{align*}
	is well defined and descends to a non-degenerate bi-linear pairing on the $\BF_p$-vector space $L^{\vee}/L$ (cf.~\cite[\S 5.3.1]{Howard2015}). Noting that $p[w_j, w_j]=p\i [v_j,v_j]$ is by assumption in $\ZZ_p^{\times}$, we see from (\ref{eq:vj val 1}) that the image of $\tau_j$ in $\GL(L^\vee/L)$ is given by the reflection associated to an anisotropic vector in $L^\vee/L$, namely the image of $w_j$. 
	
	In conclusion, the image of $h$ in $\GL(L^\vee/L)$ is the product of $m'$ reflections, where $m'$ is the number of the $v_j$'s such that $[v_j,v_j]\in p \ZZ_p^{\times}$, whereas the $m-m'$ other $v_j$'s satisfy $[v_j,v_j]\in \ZZ_p^{\times}$. Since the spinor norm of $h$ has even valuation, we know that $m'$ is even. The lemma follows.
\end{proof}

By Lemma \ref{lem:in SO} we have $\bar g \in \SO(\mathbb{V})(\BF_p)$. We also know that the image of $\bar g$ in $\GL(\mathbb V)$ is regular, because $\mathbb V$ is a cyclic $\BF_{p}[\bar g]$-module. Let $f = f_{\bar g}\in  \BF_{p}[\lambda]$ be the characteristic polynomial of $\bar g$. Thus $f$ is self-reciprocal. We use the notations in Definition \ref{defn:MMM}.
\begin{theorem}\label{thm:app to RZO} As before, assume $g\in J(\QQ_p)$ is regular semi-simple minuscule, such that $\RZO^g\neq \emptyset$. The following statements hold.
	\begin{enumerate}
		\item The formal scheme $\delta (\RZO^{\flat}) \cap \RZO ^{g}$ over $\Spf W(k)$ is a $k$-scheme.
		\item The $k$-scheme $\delta (\RZO^{\flat}) \cap \RZO ^{g}$ is non-empty if and only if there is a unique element $Q_0 \in \mathsf{SR}$ with $m_{Q_0}(f)$ odd. Moreover, when this is the case $p^{\ZZ} \backslash (\delta (\RZO^{\flat}) \cap \RZO ^{g})$ has finitely many $k$-points, and is in particular Artinian.
		\item Assume there is a unique element $Q_0 \in \mathsf{SR}$ with $m_{Q_0}(f)$ odd. Then the total $k$-length of $p^{\ZZ} \backslash (\delta (\RZO^{\flat})\cap \RZO ^{g})$ is equal to 
		$$ \deg (Q_0) \frac{m_{Q_0}(f)+1}{2}\MMM(f).$$
	\end{enumerate}
\end{theorem}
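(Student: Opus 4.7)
The plan is to mirror the proof of Theorem \ref{thm:app to AFL}, which itself follows the strategy of \cite{AFL}. The key inputs are a geometric identification of the intersection with a fixed-point locus, the Lefschetz trace formula, and the character formula of Theorem \ref{thm:RZO case}.

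First I would establish the analogue of \cite[Proposition 4.1.2]{AFL} in the $\GSpin$ setting, identifying
\[
p^{\ZZ} \backslash \bigl(\delta(\RZO^\flat) \cap \RZO^g\bigr) \;\cong\; S^{\bar g},
\]
where $S = S_{L(g)^\vee}$ is the smooth projective $k$-variety of Howard--Pappas and $S^{\bar g}$ denotes its scheme-theoretic fixed locus under $\bar g \in \SO(\mathbb V)(\BF_p)$. This identification is essentially the content of \cite{RZO}, built from the Bruhat--Tits stratification of $\RZO^{\mathrm{red}}$ \cite{Howard2015} and the description of the special divisor $\delta(\RZO^\flat)$. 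Part (1) is then immediate.

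Next, for parts (2) and (3), I would apply the Lefschetz fixed-point formula as in the proof of Theorem \ref{thm:app to AFL}(3). Since $S$ is smooth and proper over $k$ and the fixed locus is Artinian (granted the regularity of $\bar g$ in $\GL(V)$, which follows from $\mathbb V$ being a cyclic $\BF_p[\bar g]$-module), we obtain
\[
\mathrm{length}_k(S^{\bar g}) \;=\; \tr\bigl(\bar g, \coh^*(S)\bigr).
\]
By Lemma \ref{lem:in SO}, $\bar g \in \SO(\mathbb V)(\BF_p)$ stabilizes each of the two connected components $S^\pm$, so this trace splits as $\tr(\bar g, \coh^*(S^+)) + \tr(\bar g, \coh^*(S^-))$. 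Lemma \ref{lem:identify S} identifies $S^-$ with $\overline{X_{J,w_1}}$, making the second summand equal to $\tr(\bar g, J, \mathscr L)$. For the first summand, I would choose any $\tau \in \mathrm O(\mathbb V)(\BF_p) - \SO(\mathbb V)(\BF_p)$; conjugation by $\tau$ gives an isomorphism $S^+ \isom S^-$ intertwining the action of $\bar g$ on $S^+$ with that of $\tau^{-1}\bar g\tau$ on $S^-$. Since $\tau^{-1}\bar g\tau$ has the same characteristic polynomial $f$ as $\bar g$ and is still regular in $\GL(\mathbb V)$, the character formula Theorem \ref{thm:RZO case} (whose output depends only on $f$) yields $\tr(\bar g, \coh^*(S^+)) = \tr(\tau^{-1}\bar g\tau, J, \mathscr L) = \tr(\bar g, J, \mathscr L)$. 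Hence $\mathrm{length}_k(S^{\bar g}) = 2\,\tr(\bar g, J, \mathscr L)$.

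Part (2) then follows from Theorem \ref{thm:RZO case}(2), noting that the parity constraint imposed by $\dim_{\BF_p}\mathbb V$ being even together with $m_{\lambda+1}=0$ (Theorem \ref{thm:RZO case}(1)) forces any hypothetical $Q_0 \in \mathsf{SR}$ with $m_{Q_0}(f)$ odd to satisfy $Q_0 \neq \lambda \pm 1$, so the hypothesis of Theorem \ref{thm:RZO case}(3) is automatically met. Inserting that formula gives
\[
\mathrm{length}_k(S^{\bar g}) \;=\; 2 \cdot \tfrac{\deg Q_0}{2} \cdot \tfrac{m_{Q_0}(f)+1}{2} \cdot \MMM(f) \;=\; \deg(Q_0) \cdot \tfrac{m_{Q_0}(f)+1}{2} \cdot \MMM(f),
\]
which is part (3). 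The main obstacle is the initial geometric identification in the first step, which requires careful invocation of the $\GSpin$ Rapoport--Zink theory of \cite{Howard2015} and \cite{RZO}; after that, the argument reduces cleanly to Theorem \ref{thm:RZO case}, and the extra factor of $2$ compared with the unitary case arises transparently from the two connected components of $S$.
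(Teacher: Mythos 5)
Your proposal is correct and follows essentially the same route as the paper: identify $p^{\ZZ}\backslash(\delta(\RZO^{\flat})\cap\RZO^{g})$ with $S^{\bar g}$, apply the Lefschetz fixed-point formula, split the trace over the two components $S^{\pm}$ using Lemma \ref{lem:in SO} and Lemma \ref{lem:identify S}, transport $S^{+}$ to $S^{-}$ by an element of $\mathrm{O}(\mathbb V)(\BF_p)-\SO(\mathbb V)(\BF_p)$ (legitimate since the formula of Theorem \ref{thm:RZO case} depends only on the characteristic polynomial), and conclude with the factor of $2$. The only cosmetic difference is that the paper simply cites \cite[Corollary 5.1.2]{RZO} and \cite[Theorem 3.6.4]{RZO} for parts (1) and (2), while you sketch a re-derivation of (2) from Theorem \ref{thm:RZO case} together with finiteness of fixed points of the regular element $\bar g$; both are fine.
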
 
\begin{proof} Part (1) follows from \cite[Corollary 5.1.2]{RZO}, and part (2) is proved in \cite[Theorem 3.6.4]{RZO}.
	
	For part (3), we first apply \cite{RZO} to identify $p^{\ZZ} \backslash (\delta (\RZO^{\flat})\cap \RZO ^{g}) $ with $S^{\bar g}$, the scheme theoretic fixed points of $S$ under $\bar g$. Since $\bar g$ is in $\SO(\mathbb V)(\BF_p)$ (Lemma \ref{lem:in SO}), it stabilizes $S^+$ and $S^-$. Hence $S^{\bar g} = (S^+)^{\bar g} \sqcup (S^-)^{\bar g}$. By the same arguments as in the proof of Theorem \ref{thm:app to AFL} (3), the $k$-length of $S^{\bar g}$ is equal to $\tr(\bar g, \coh^*(S)) = \tr (\bar g , \coh ^*(S^+)) + \tr (\bar g, \coh ^*(S^-))$.

	By Lemma \ref{lem:identify S} and by the fact that $\bar g$ is regular in $\GL(\mathbb V)$, we know that $\tr(\bar g, \coh^*(S^-))$ is given by the formula in Theorem \ref{thm:RZO case} (3). Fix $g_0\in \mathrm{O}(\mathbb V)(\BF_p)-\SO(\mathbb V)(\BF_p)$. Then under the natural action of $\mathrm{O} (\mathbb V) (\BF_q)$ on $S$, the element $g_0$ interchanges $S^+$ and $S^-$, by the proof of \cite[Proposition 5.3.2]{Howard2015}. Hence we have $\tr(\bar g, \coh^*(S^+)) = \tr(g_0\bar gg_0\i, \coh^*(S^-))$. Since the formula in Theorem \ref{thm:RZO case} (3) only depends on the characteristic polynomial, and since $\bar g$ and $g_0 \bar g g_0\i$ are elements of $\SO(\mathbb V)(\BF_p)$ which are both regular in $\GL(\mathbb V)$ and have the same characteristic polynomial, we have $\tr(\bar g, \coh^*(S^+))= \tr(\bar g, \coh^*(S^-))$. It follows that $\tr(\bar g, \coh^*(S))$ is equal to twice the formula in Theorem \ref{thm:RZO case} (3). The proof of part (3) is finished.
\end{proof}
\begin{remark}
	Theorem \ref{thm:app to RZO} (3) was previously proved in \cite{RZO}, under the assumption that $p>(m_{Q_0}+1)/2$. This assumption is removed in Theorem \ref{thm:app to RZO}. On the other hand, under the same assumption on $p$ the paper \cite{RZO} determines each local ring of $\delta (\RZO^{\flat}) \cap \RZO ^{g}$. This is a result not revealed by the methods of the current paper. 
\end{remark}
\begin{remark}
	We correct two mistakes in \cite{AFL} and \cite{RZO}. Firstly, in both the papers the definition of the reciprocal of a polynomial should be normalized so that the reciprocal is monic, as in \S \ref{subsec:rec}. This mistake does not affect the correctness of any of the proofs. Secondly, in \cite[Theorem A (2), Theorem 3.6.4]{RZO}, the product should be over pairs of non-self-reciprocal irreducible monic factors, as in Theorem \ref{thm:app to RZO} and Definition \ref{defn:MMM}, as opposed to over single non-self-reciprocal irreducible monic factors. To correct the proof of \cite[Theorem 3.6.4]{RZO}, one interprets the symbol $\prod _{R(T)\neq R^*(T)}$ in the proof as the product over such pairs $\set {R(T), R^*(T)}$ rather than over such $R(T)$'s.
\end{remark}

\bibliographystyle{hep}
\bibliography{myrefchar}
\end{document}